\documentclass[a4paper,11pt, reqno]{amsart}
\setcounter{tocdepth}{3}
\def\l@subsection{\@tocline{2}{0pt}{30pt}{5pc}{}}
\usepackage{amsfonts,amssymb,amsthm}
\usepackage[mathscr]{eucal}
%%%%
\usepackage{enumitem}
\usepackage{graphicx}
\usepackage{epsfig}
\usepackage{psfrag}
\usepackage{amsfonts}
\usepackage{amsmath}
\usepackage{amsthm}
\usepackage{latexsym}
\usepackage{verbatim}
\usepackage{mathtools}

  \usepackage[usenames,dvipsnames]{xcolor}

\usepackage{hyperref}

\usepackage[active]{srcltx}
\usepackage{tikz}
\usepackage{xcolor,colortbl}
\definecolor{green}{rgb}{0.1,0.1,0.1}

\setlength{\topmargin}{0.0in}
\setlength{\textheight}{21.5cm}
\setlength{\evensidemargin}{0.20in}
\setlength{\oddsidemargin}{0.20in}
\setlength{\headsep}{0.1cm}
\setlength{\textwidth}{15.0cm}

\input xy
\xyoption{all}

\theoremstyle{plain}
\newtheorem{thm}{Theorem}

\newtheorem{corollary}[thm]{Corollary}
\newtheorem{lemma}[thm]{Lemma}
\newtheorem{prop}[thm]{Proposition}

\newtheoremstyle{exm}
{9pt}{9pt}{}{}{\bfseries}{}{.5em}{}
\theoremstyle{exm}
\newtheorem{exm}[thm]{Example}

\theoremstyle{definition}
\newtheorem{defin}{Definition}[section]

\newtheoremstyle{rmk}
{9pt}{9pt}{}{}{\bfseries}{}{.5em}{}
\theoremstyle{rmk}
\newtheorem{rmk}[thm]{Remark}
\newtheoremstyle{question}
{9pt}{9pt}{}{}{\bfseries}{}{.5em}{}
\theoremstyle{question}
\newtheorem{question}[thm]{Question}
\numberwithin{equation}{section}
\numberwithin{thm}{section}
\numberwithin{figure}{section}

\newcommand{\ft}{{\mathfrak{t}}}
\newcommand{\ts}{\mathfrak{t}^{*}}

\newcommand{\il}{\mathsf{i}}

\newcommand{\N}{\mathbb{N}}

% first Chern class and its equivariant extension

%%%%

\newcommand{\J}{\mathsf{J}}

\newcommand{\T}{\mathbb{T}}

\newcommand{\fix}{\M^{S^1}}
\renewcommand{\H}{\mathbb{H}}
\newcommand{\SL}{\mathrm{SL}}

\newcommand{\M}{\mathsf{M}}
\newcommand{\R}{\mathbb{R}}
\newcommand{\Z}{{\mathbb{Z}}}
\newcommand{\C}{{\mathbb{C}}}

\newcommand{\Q}{\mathbb{Q}}

\newcommand{\LS}{\mathbb{L}_{S^1}}

\newcommand{\ac}{(\M,\J,S^1)}
\newcommand{\fixed}{\M^{S^1}}
\DeclareMathOperator{\homo}{Hom}

\DeclareMathOperator{\sign}{sign}

\DeclareMathOperator{\ind}{Ind}
\DeclareMathOperator{\k0}{\mathbf{k_0}}

\newcommand{\GL}{\mathrm{GL}}

\newcommand\numberthis{\addtocounter{equation}{1}\tag{\theequation}}

\newcommand{\purge}[1]{}

\title[Rigidity of elliptic genera]{Rigidity of elliptic genera: from number theory to geometry and back.}

\author[K. Bringmann, A. Caviedes Castro, S. Sabatini, M. Schwagenscheidt]{Kathrin Bringmann, Alexander Caviedes Castro, \\Silvia Sabatini, Markus Schwagenscheidt}

\thanks{This research was supported by the SFB-TRR 191 \emph{Symplectic
Structures in Geometry, Algebra and Dynamics}, funded by the DFG}

\address{Department Mathematik/Informatik, Abteilung Mathematik, Universit\"at zu K\"oln, Weyertal 86-90, D-50931 K\"oln, Germany}
\email{kbringma@math.uni-koeln.de \\ caviedes@math.uni-koeln.de} \email{sabatini@math.uni-koeln.de \\ mschwage@math.uni-koeln.de}

\subjclass[2010]{58J26, 57R20, 37J15}
\keywords{Betti numbers,  circle actions,  Eisenstein series, Elliptic genera, index, Modular forms.}

\date{\today}

\begin{document}

\begin{abstract}
In this paper we derive topological and number theoretical consequences of the rigidity of elliptic genera, which are
special modular forms associated to each compact almost complex manifold.
In particular, on the geometry side, we prove that rigidity
implies relations between the Betti numbers and the index of a compact symplectic manifold
of dimension $2n$ admitting a Hamiltonian action of a circle with isolated fixed points.
We investigate the case of maximal index and toric actions.
 On the number theoretical side we prove that from each compact almost complex manifold of index greater than one, that can be endowed with the action of a circle with
 isolated fixed points, one can derive non-trivial relations among Eisenstein series. We give explicit formulas coming from the standard action on $\C P^n$.
\end{abstract}
\maketitle

\tableofcontents

\section{Introduction and statement of results}
\subsection{Motivation}
The goal of this paper is to derive topological and number theoretical consequences of the rigidity of elliptic genera on almost complex manifolds acted on
by a circle.

Let $(\M,\J)$ be a compact almost complex manifold of dimension $2n$. The elliptic genus of level $N$, denoted by $\varphi_N(\M)$,
is a certain modular form (for the group $\Gamma_1(N)$) of weight $n$ associated to $\M$.
Although its formal definition may seem convoluted (see Section \ref{def elliptic genus}), its original inspiration comes from an intuition of Witten \cite{W2} who
heuristically defined the Dirac operator on the free loop space $\mathcal{L}\M$ of $\M$. In a similar fashion, Hirzebruch justified the definition of the
elliptic genus of level $N$ as the so-called $\chi_y$-genus of $\mathcal{L}\M$ \cite[Section 7.4]{HBJ} (see also Section \ref{vvb}).

One of the surprising features of this genus is that it is rigid in the following sense.
If we assume the first Chern
class of $\M$ to be divisible by $N$, then the Fourier expansion of $\varphi_N(\M)$ at a certain cusp has coefficients given by the topological Atiyah--Singer index of
certain bundles associated to $\M$, namely tensor products of exterior and symmetric powers of $T\M$ and $T^*\M$ and of the line bundle
 $L=(\wedge^n T^*\M)^{\frac{1}{N}}$ \cite[Appendix III, Section 4]{HBJ} (see also equation \eqref{infinite tensor} and Section \ref{section rigidity}).
If $(\M,\J)$ is acted on by a circle that preserves the almost complex structure, then each of the above bundles inherits a circle action and one could
consider the  equivariant elliptic genus of level $N$, where the indices above are replaced by the equivariant indices of the bundles, which are therefore
Laurent polynomials, namely elements of $\Z[t,t^{-1}]$. The celebrated  rigidity theorem for elliptic genera \cite[Theorem on page 181]{HBJ} asserts
that if the first Chern class is divisible by $N$, then the equivariant elliptic genus of level $N$ is rigid, namely the equivariant indices of the bundles above are indeed constant.
Moreover, for certain types of actions, even more is true, as the elliptic genus vanishes identically.

The proof of the rigidity theorem has a long history, dating back to results of Landweber and Ochanine (for a detailed account see \cite{Land} and the references therein), as well
as Taubes \cite{Ta} and Bott--Taubes \cite{BT}.
Also, prior to elliptic genera, rigidity phenomena of some special bundles had already been observed by, for instance, Atiyah--Hirzebruch \cite{AH} and Hattori \cite{H2}.
However for the bundles appearing in the expansion of the elliptic genus, the unified framework offered by the latter is necessary for the proof of their rigidity.
\\$\;$
Using these deep theorems we are able to deduce two types of results.
On the geometric side we observe relations between the index (or minimal Chern number) and the Betti numbers of a
compact symplectic manifold admitting a Hamiltonian action of a circle. In particular, the toric case, as well as that of maximal index, are analyzed, as described in Section \ref{geometry intro}.
An important feature of our approach is that the use of elliptic genera frees us from the usual positivity assumptions on the first Chern class (e.g.\ monotonicity or Fano hypotheses).

On the number theoretical side we use rigidity to obtain relations among products of Eisenstein series
which are dictated by the weights of the action on an almost complex manifold; these relations are, to the best of our knowledge, new (see Section \ref{number theory intro}).
Observe that, on the one hand, finding algebraic relations among modular forms is a non-trivial task. On the other hand, knowing which sets of integers can arise as the weights of a circle action is the content of the yet unresolved Smith problem. Yet somehow the rigidity of elliptic genera forms a bridge between these two mysterious phenomena.

\subsection{From number theory to geometry}\label{geometry intro}
Let $(\M,\J)$ be a compact almost complex manifold of dimension $2n$ with first Chern class given by $c_1$. We recall that the {\it index} of $(\M,\J)$
is the largest integer $\k0$ such that, modulo torsion, $c_1=\k0 \eta$ for some non-zero $\eta\in H^2(\M;\Z)$.
For symplectic manifolds, namely manifolds that can be endowed with a closed, non-degenerate two form $\omega$, the set of
almost complex structures compatible with $\omega$ is contractible, and hence one can define Chern classes of the tangent bundle.
We can then consider the first Chern class and the index of a compact symplectic manifold $(\M,\omega)$.

In analogy with algebraic geometry, it is natural to ask if there is a relation between the index and the
Betti numbers of $(\M,\omega)$. If we restrict to symplectic manifolds admitting a Hamiltonian circle action with isolated fixed points, then there has already been some progress in this direction.
Indeed, in \cite{S} the third author showed that for such manifolds the index, which coincides with the minimal Chern number, is bounded above by $n+1$.
In \cite{GHS} the authors proved that under the monotonicity assumption, namely $c_1=[\omega]$, and two additional technical hypotheses, there are several relations between
the index and the Betti numbers which mirror results obtained in algebraic geometry for Fano varieties; see also \cite{C} for recent developments.

In this paper we employ elliptic genera to derive relations between the index and the Betti numbers of a compact symplectic manifold admitting a Hamiltonian circle action with
isolated fixed points.
The use of elliptic genera of level $N$ is motivated by a result of Hirzebruch (see Theorem \ref{special values}) which asserts that the value of the elliptic genus at some special points
recovers information about the $\chi_y$-genus, which in turns depends on the Betti numbers of $\M$, and about the indices of some tensor powers of the line bundle $L=(\wedge^n  T^*\M)^{\frac{1}{\k0}}$ (see \cite{S}).

The first result that we obtain concerns compact symplectic manifolds admitting a larger action, namely that of a compact torus whose dimension
equals half the dimension of the manifold; these spaces are also known as  symplectic toric manifolds and the action as a toric action.
\begin{corollary}\label{Betti toric}
Let $(\M,\omega)$ be a compact symplectic manifold of dimension $2n$ that can be endowed with a toric action. Let $\k0$ be the index of $(\M,\omega)$, $b_j(\M)$ the $j$-th Betti numbers of $\M$ and $\mathbf{b}$ the vector
$(b_0(\M),b_2(\M),\ldots,b_{2n-2}(\M), b_{2n}(\M))$. Then
\begin{equation}\label{corollary toric}
\sum_{j=0}^{\k0-1}y^j \quad \text{divides} \quad \sum_{j=0}^n b_{2j}(\M)y^j.
\end{equation}
In particular we have $\k0\leq n+1$ and the following holds:
\begin{enumerate}[leftmargin=*]
            \item[\rm{(1)}] If $\k0 = n+1$, then $(\M,\omega)$ is symplectomorphic to $\C P^n$ with Fubini-Studi form suitably rescaled, and the symplectomorphism intertwines the torus
            action on $\M$ with the standard toric action on $\C P^n$.
            \item[\rm{(2)}] If $\k0 = n$, then $\mathbf{b}=(1,2,2,\dots,2,2,1)$.
            \item[\rm{(3)}] If $\k0 = n-1$, then we have for some non-negative  integer $m$
            \[
            \mathbf{b}=(1,\, 1+m, \, 2+m, \, 2+m, \, \dots \, ,\, 2+m, \, 2+m,\, 1+m, \, 1).
            \]

            \item[\rm{(4)}] If $\k0 = n-2$, then we have for some non-negative integer $m$
            \[
            \mathbf{b}=(1,\, 1+m, \, 1+2m,\,  2 + 2m, \, 2+2m, \, \dots \, , 2+2m,\,  1+2m,\, 1+ m,\, 1).
            \]

        \end{enumerate}
\end{corollary}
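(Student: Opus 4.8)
The plan is to deduce the divisibility \eqref{corollary toric} from the rigidity of the level-$\k0$ elliptic genus and then to read off the four cases by elementary polynomial arithmetic together with Poincar\'e duality. First I would record three structural facts about a compact symplectic toric manifold $(\M,\omega)$ of dimension $2n$: (i) being a smooth projective toric variety, its cohomology is of Hodge--Tate type, so the odd Betti numbers vanish and $h^{p,q}=0$ for $p\neq q$; consequently the Hirzebruch $\chi_y$-genus satisfies $\chi_y(\M)=\sum_{j=0}^n(-1)^j b_{2j}(\M)\,y^j$, whence $\sum_{j=0}^n b_{2j}(\M)\,y^j=\chi_{-y}(\M)$; (ii) a generic circle $S^1\subset\T$ acts on $\M$ with the same, necessarily isolated, fixed points, so all hypotheses needed to invoke rigidity are met and $\k0$ is the minimal Chern number studied in \cite{S}; (iii) Poincar\'e duality gives $b_{2j}(\M)=b_{2n-2j}(\M)$ with $b_0(\M)=b_{2n}(\M)=1$. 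Since $\sum_{j=0}^{\k0-1}y^j=\prod_{\zeta}(y-\zeta)$ has the nontrivial $\k0$-th roots of unity as simple roots, the divisibility \eqref{corollary toric} is equivalent to the vanishing $\chi_{-\zeta}(\M)=0$ for every $\zeta$ with $\zeta^{\k0}=1$ and $\zeta\neq 1$.

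Next I would establish this vanishing. By definition of the index we have $c_1=\k0\,\eta$, so $c_1$ is divisible by $N:=\k0$ and the rigidity theorem for the equivariant elliptic genus of level $\k0$ applies: its equivariant index is constant in the variable $t$, i.e.\ equals its non-equivariant value. On the other hand, Hirzebruch's special value theorem (Theorem \ref{special values}) expresses the values of $\varphi_{\k0}(\M)$ at the $\k0$-division points in terms of the $\chi_y$-genus evaluated at the roots of unity $-\e{k/\k0}$ and of the indices of powers of $L=(\wedge^n T^*\M)^{1/\k0}$. Matching these two descriptions, for each $k=1,\dots,\k0-1$ the relevant special value of $\varphi_{\k0}(\M)$ recovers $\chi_{-\e{k/\k0}}(\M)$, while rigidity together with Atiyah--Bott localization over the isolated fixed points forces this value to vanish. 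This yields $\chi_{-\zeta}(\M)=0$ at all nontrivial $\k0$-th roots of unity, hence \eqref{corollary toric}; comparing degrees ($n$ on the right, $\k0-1$ on the left, and $b_{2n}(\M)=1\neq 0$) gives at once $\k0\leq n+1$.

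Finally I would extract the four cases. Writing $\chi_{-y}(\M)=(1+y+\cdots+y^{\k0-1})\,Q(y)$, the quotient $Q$ has degree $n-\k0+1$; since both $\chi_{-y}(\M)$ and $1+y+\cdots+y^{\k0-1}$ are palindromic, so is $Q$, and $b_0(\M)=b_{2n}(\M)=1$ forces $Q$ to have constant and leading coefficient $1$. For $\k0=n+1$ we get $Q\equiv 1$, so $b_{2j}(\M)=1$ for all $j$; the Delzant polytope of $\M$ then has $h$-vector $(1,\dots,1)$, hence is a simplex, so by Delzant's classification $\M$ is symplectomorphic to $\C P^n$ with a rescaled Fubini--Study form and the symplectomorphism intertwines the torus actions, giving (1). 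For $\k0=n$ the palindromic degree-one $Q$ equals $1+y$, yielding $\mathbf{b}=(1,2,\dots,2,1)$, which is (2). For $\k0=n-1$ we have $Q=1+my+y^2$ and for $\k0=n-2$ we have $Q=1+my+my^2+y^3$ with the single parameter $m=b_2(\M)-1$; expanding the products gives exactly the vectors in (3) and (4), and $m\geq 0$ because $b_2(\M)\geq 1$ (as $[\omega]^n\neq 0$).

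The main obstacle is the vanishing step: one must pin down the normalization in Theorem \ref{special values} precisely enough to identify the relevant special values of $\varphi_{\k0}(\M)$ with $\chi_{-\e{k/\k0}}(\M)$, and then argue that rigidity forces these to be $0$ rather than merely $t$-independent -- the key point being that, for $0<k<\k0$, the equivariant index computed by localization over the isolated fixed points has vanishing constant term because of the fractional twisting by $L$. A secondary subtlety is case (1), where upgrading ``$\M$ has the Betti numbers of $\C P^n$'' to ``$\M$ is symplectomorphic to $\C P^n$'' requires the toric classification (alternatively the index-$(n+1)$ characterization in \cite{S}) rather than cohomological information alone.
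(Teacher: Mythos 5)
Your overall architecture is the same as the paper's: show that the elliptic genera $\varphi_N(\M)$ vanish for all $N\geq 2$ dividing $\k0$, convert this via Theorem \ref{special values} into the vanishing of $\chi_y(\M)$ at $y=-\zeta$ for every nontrivial $\k0$-th root of unity $\zeta$, and finish with Poincar\'e duality, elementary polynomial arithmetic, and Delzant's classification for case (1). That final portion of your argument is correct and essentially identical to the paper's proof. The gap is in the central vanishing step, which you try to extract from rigidity alone. Rigidity (constancy in $t$ of the equivariant genus) does \emph{not} imply vanishing. The paper uses the second, stronger half of Theorem \ref{rigidity}: $\varphi_N(\M)\equiv 0$ holds when the \emph{type} of the action (the common residue of $w_1(P)+\cdots+w_n(P)$ modulo $N$) is nonzero, and the entire content of Proposition \ref{toric vanishing} is the construction of a subcircle of $\T$ whose type is nonzero mod $N$. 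Your ``generic circle $S^1\subset\T$'' need not have this property: on $\C P^1\times\C P^1$ (index $2$) the diagonal subcircle $\lambda\cdot([z_0:z_1],[w_0:w_1])=([z_0:\lambda z_1],[w_0:\lambda w_1])$ has isolated fixed points, but the weight sums are $\pm1\pm1\equiv 0\pmod 2$ at every fixed point, so for this circle every exponent in the localization formula is an integer and no ``fractional twisting'' argument can get off the ground. That vanishing is genuinely not automatic is underscored by Theorem \ref{main geometry}, where the vanishing of $\varphi_{n+1}(\M)$ is a nontrivial hypothesis (equivalent to $\M$ being complex cobordant to $\C P^n$), not a consequence of rigidity.

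There is a second, related confusion in your mechanism. By Theorem \ref{special values}, the values $\chi_y(\M)/(1+y)^n$ at $y=-e^{2\pi i\ell/N}$ are the values of $\varphi_N(\M)$ at the cusps represented by division points with $k=0$; the cusps with $k>0$ carry instead the values $\ind(L^k)$. Your ``key point'' --- that the equivariant index has vanishing constant term because of the fractional twisting by $L$ --- could at best (and only when the type is nonzero mod $N$) apply to the $k>0$ cusps, i.e.\ to $\ind(L^k)$; it says nothing about the $k=0$ cusps, where the relevant bundles ($\wedge_{-\zeta_N}T^*$ tensored with the higher $R_j$'s) carry honest integral $S^1$-weights and no power of $L$ appears. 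Indeed the $\chi_y$-genus is rigid for \emph{every} circle action on every compact almost complex manifold, and its value $\sum_j N_j(-y)^j$ certainly need not vanish at roots of unity in general. To reach the $k=0$ cusps one needs $\varphi_N(\M)\equiv 0$ identically as a modular form, so that its values at \emph{all} cusps vanish simultaneously; this is exactly what the type condition delivers. The repair is to replace your generic circle by the explicit subcircle constructed in Proposition \ref{toric vanishing} and to invoke the vanishing statement of Theorem \ref{rigidity}, rather than rigidity alone.
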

Hence $\C P^n$ is the only symplectic toric manifold of index $n+1$, regardless of the monotone assumption. As for $\k0=n$, it is known that if the symplectic toric manifold $(\M,\omega)$ is monotone, then
$n=2$ and $\M$ is $\C P^1\times \C P^1$ (see \cite[Corollary 5.12]{GHS}).

In Corollary \ref{reflexive h vector} we derive a straightforward translation of Corollary \ref{corollary toric} for a smooth reflexive polytope $\Delta$, where the vector of even Betti numbers is replaced by the $h$-vector of $\Delta$, and the index by the great common divisor of the affine lengths of its edges.

Next, we specialize to the case in which the index
is maximal. First of all observe that $\C P^n$, endowed with the Fubini-Studi symplectic form, is an example of a compact symplectic manifold with
a Hamiltonian action of a circle and isolated fixed points: this can be obtained from the standard toric action by restricting to a generic subcircle. Moreover its
index is exactly $n+1$ and its elliptic genus of level $n+1$ vanishes (see \cite{HBJ} and also Proposition \ref{toric vanishing} for an alternative proof).
In the following we prove that -- up to homotopy equivalence and complex cobordism -- the converse is also true.
\begin{thm}\label{main geometry}
Let $(\M,\omega)$ be a compact, connected symplectic manifold of dimension $2n$ which can be endowed with a Hamiltonian action of a circle with isolated fixed points.
 Assume that the index is maximal, i.e., $\k0=n+1$.
Then $\M$ is complex cobordant to $\C P^n$ if and only if its elliptic genus of level $n+1$ vanishes. Moreover,
if the elliptic genus of level $n+1$ vanishes, then $\M$ is homotopy equivalent to $\C P^n$.
\end{thm}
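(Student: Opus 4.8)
The plan is to handle the two implications of the equivalence separately and then to extract the homotopy statement from the cobordism one. For the easy direction, recall that the level-$N$ elliptic genus is a genus, i.e.\ a ring homomorphism from the complex cobordism ring to modular forms, and is therefore a complex-cobordism invariant. Since $\varphi_{n+1}(\C P^n)=0$ (as recalled above, cf.\ also Proposition \ref{toric vanishing}), any $\M$ complex cobordant to $\C P^n$ satisfies $\varphi_{n+1}(\M)=\varphi_{n+1}(\C P^n)=0$. This settles one direction at once.

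For the converse, assume $\varphi_{n+1}(\M)\equiv 0$. The moment map of the $S^1$-action is a perfect Morse function whose critical points are the isolated fixed points and whose indices are all even; hence $\M$ is simply connected, all odd Betti numbers vanish, $H^*(\M;\Z)$ is free, and $b_{2j}(\M)$ equals the number of fixed points $p$ with exactly $j$ negative weights. Since the $\chi_y$-genus is rigid, localization at the fixed points gives $\chi_y(\M)=\sum_p(-y)^{n^-_p}=\sum_{j=0}^n b_{2j}(\M)(-y)^j$, where $n^-_p$ is the number of negative weights at $p$. By Hirzebruch's description of the special values (Theorem \ref{special values}), the identical vanishing of $\varphi_{n+1}$ recovers exactly the divisibility $\sum_{j=0}^{n}y^j$ divides $\sum_{j=0}^n b_{2j}(\M)y^j$, just as in the toric case of Corollary \ref{Betti toric} (which is itself deduced from the vanishing of the genus). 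Comparing degrees and using $b_0=b_{2n}=1$ forces $b_{2j}(\M)=1$ for all $0\le j\le n$.

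To upgrade this to a cobordism, I exploit the maximal index: $c_1=(n+1)\eta$ with $\eta$ a generator of $H^2(\M;\Z)\cong\Z$, and I set $x:=\eta$. As $H^{2j}(\M;\Z)\cong\Z$, every Chern class is a multiple $c_j=a_jx^j$ with $a_1=n+1$, and every Chern number equals $v\cdot\prod_k a_{i_k}$, where $v:=\langle x^n,[\M]\rangle$ and $i_1+\cdots+i_r=n$; thus the cobordism class is encoded by $(a_2,\dots,a_n,v)$. The Euler number yields $a_nv=\chi(\M)=n+1$, and the now-determined $\chi_y$-genus $\sum_{j}(-y)^j$ (the one of $\C P^n$) produces the relations $\chi^p(\M)=(-1)^p$, i.e.\ several polynomial equations in the $a_j$ and $v$. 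The remaining equations come from $\varphi_{n+1}(\M)\equiv 0$: each Fourier coefficient of $\varphi_{n+1}(\M)$ is an index of one of the bundles in \eqref{infinite tensor}, hence a Chern number, so its vanishing gives further relations. Feeding in $a_1=n+1$, I would solve this system to obtain $a_j=\binom{n+1}{j}$ and $v=1$, i.e.\ that all Chern numbers of $\M$ equal those of $\C P^n$; since $\Omega^U_*$ is torsion-free and detected by Chern numbers, this means $[\M]=[\C P^n]$ in $\Omega^U_{2n}$.

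Finally, the cobordism forces $\langle c_1^n,[\M]\rangle=(n+1)^n$, and with $c_1=(n+1)x$ this gives $v=\langle x^n,[\M]\rangle=1$; hence $x^j$ generates $H^{2j}(\M;\Z)$ for every $j$ and $H^*(\M;\Z)\cong\Z[x]/(x^{n+1})$ as graded rings. Since $\M$ is simply connected, the classifying map $\M\to K(\Z,2)=\C P^\infty$ of $x$ factors up to homotopy through $\C P^n$ and induces an isomorphism on integral cohomology, so by Whitehead's theorem it is a homotopy equivalence. The main obstacle is the third step: the $\chi_y$-genus alone supplies only about $n/2$ independent relations, far fewer than the $n$ unknowns $(a_2,\dots,a_n,v)$, so one must genuinely use the full force of $\varphi_{n+1}(\M)\equiv 0$, or, alternatively, exploit the $S^1$-fixed-point data via Atiyah--Bott--Berline--Vergne localization to show that maximal index together with a single fixed point of each even index pins down the isotropy weights, and hence all Chern numbers, to those of a linear action on $\C P^n$. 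Verifying that this overdetermined system has $\C P^n$ as its unique solution is the crux.
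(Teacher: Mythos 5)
Your easy direction (cobordism invariance of genera plus $\varphi_{n+1}(\C P^n)=0$) and your final step (deriving $H^*(\M;\Z)\cong\Z[x]/(x^{n+1})$ from the cobordism and concluding via the classifying map $\M\to\C P^\infty$, cellular approximation into $\C P^n$, and Whitehead) are fine; the latter is even a reasonable alternative to the paper's use of Tolman's and Charton's results. But the core of the converse has two genuine gaps. First, your deduction of $b_{2j}(\M)=1$ is incorrect when $n+1$ is composite: Theorem \ref{special values} at the cusps with $k=0$ evaluates $\chi_y(\M)$ only at $y=-e^{2\pi i\ell/(n+1)}$ with $\gcd(\ell,n+1)=1$, so identical vanishing of $\varphi_{n+1}(\M)$ forces divisibility of $\chi_y(\M)$ only by the cyclotomic polynomial $\Phi_{n+1}(-y)$, of degree $\phi(n+1)$, not by $\sum_{j=0}^{n}(-y)^j$. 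Reaching the non-primitive roots requires the vanishing of $\varphi_N(\M)$ for the proper divisors $N$ of $n+1$ --- exactly the stronger hypothesis of Proposition \ref{divisibility chiy genus}, available in the toric case through Proposition \ref{toric vanishing} but \emph{not} assumed in Theorem \ref{main geometry}. Concretely, for $n=5$ the vanishing of $\varphi_6(\M)$ together with Poincar\'e duality forces only $b_4=b_6=1$ and leaves $b_2=b_8$ completely undetermined. This is precisely why the paper does not argue through cusp \emph{values} at all: it uses the full $q$-expansions at the cusps with $k=1$, whose vanishing gives $\ind\left(L\otimes\wedge^m T^*\right)=0$ for $0\leq m\leq n-1$ (Proposition \ref{ellipticgenus0}), and then the Hilbert-polynomial identities (Proposition \ref{fixed points and polynomials}, Lemma \ref{Hn}, resting on the symmetry of Proposition \ref{symmetries H}) to conclude that the number of fixed points is exactly $N_0(n+1)=n+1$ (Theorem \ref{main 2}).

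Second, even granting all Betti numbers equal to one, your route to the cobordism --- solving a polynomial system in $(a_2,\dots,a_n,v)$ coming from $\chi_y$-values and vanishing Fourier coefficients --- is not carried out, and you yourself flag it as ``the crux.'' This is where the paper's substantive input lies: once $\chi(\M)=\k0=n+1$, Theorem \ref{main 3} invokes Hattori's theorem \cite[Theorem 5.7]{Ha} (after verifying quasi-ampleness and Hattori's condition D for the prequantization line bundle, using \cite[Proposition 3.4]{T}) to conclude that the isotropy weights at the $n+1$ fixed points are those of a linear $S^1$-action on $\C P^n$; since every Chern number is computed from the weights by localization \eqref{chern number weights}, all Chern numbers agree with those of $\C P^n$, and Milnor--Novikov gives the cobordism. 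Your parenthetical alternative (``the fixed-point data pins down the weights'') \emph{is} Hattori's theorem, but it is a deep result, not a formal consequence of localization, so as written the proposal does not close either gap.
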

We would like to remark that, even if the hypothesis of the vanishing of the elliptic genus of level $n+1$ seems strong, it is automatically satisfied
if the type of the circle action is not zero modulo $n+1$ (see Definition \ref{balanced} and Theorem \ref{rigidity}); for instance this is the case for all symplectic toric manifolds (Proposition \ref{toric vanishing}).

\subsection{From geometry to number theory}\label{number theory intro}

We now discuss an application of the elliptic genus to number theory. Namely, we show, using the rigidity of the elliptic genus, that manifolds with circle actions yield many non-trivial relations between modular forms.

Let $k$ and $N$ be positive integers with $N \geq 2$. We consider the \emph{Eisenstein series} $G_{k,N}(\tau)$ defined for $\tau \in \H := \{\tau \in \C: \operatorname{Im}(\tau) > 0\}$ by the Fourier expansion
    \begin{align}\label{eisenstein expansion}
    G_{k,N}(\tau) :=  -\sum_{n=1}^{\infty}\left(\sum_{d \mid n}\left(\frac{n}{d} \right)^{k-1}\frac{\zeta_{N}^{-d} + (-1)^{k}\zeta_{N}^{d}}{(k-1)!}\right)e^{2\pi i n \tau} + \begin{dcases}
    \frac{1+\zeta_{N}}{2(1-\zeta_{N})} & \text{if }k = 1, \\
        \frac{B_{k}}{k!} & \text{if } k > 1,
        \end{dcases}
    \end{align}
    where $\zeta_{N} := e^{\frac{2\pi i}{N}}$ is a primitive $N$-th root of unity and $B_{k}$ denotes the $k$-th Bernoulli number. The Eisenstein series $G_{k,N}$ is a modular form of weight $k$ for the subgroup
    \[
    \Gamma_{1}(N): = \left\{\begin{pmatrix} a & b \\ c & d \end{pmatrix} \in \SL_{2}(\Z): c \equiv 0 \pmod N, a \equiv d \equiv 1 \pmod N\right\}
    \]
    of $\SL_{2}(\Z)$. This means that $G_{k,N}$ is holomorphic on $\H$ and at the cusps and satisfies the transformation law
    \[
    G_{k,N}\left(\frac{a\tau + b}{c\tau + d}\right) = (c\tau + d)^{k}G_{k,N}(\tau)
    \]
    for all $\left( \begin{smallmatrix} a & b \\ c & d \end{smallmatrix} \right) \in \Gamma_{1}(N)$ and $\tau \in \H$ (compare Section~\ref{section modular forms}). It is a fundamental fact that the vector space of all modular forms of fixed weight $k$ for $\Gamma_{1}(N)$ is finite-dimensional. This can be exploited to obtain relations between modular forms, for example. The following theorem roughly states that manifolds with circle actions yield relations for products of Eisenstein series.

\begin{thm}\label{main number theory}
    Let $(\M,\J)$ be a compact, connected, almost complex manifold of dimension $2n$ which is acted on effectively by a circle with a non-empty set of isolated fixed points. For a fixed point $P \in \M^{S^{1}}$ we let $w_{1}(P),\dots,w_{n}(P) \in \Z \setminus \{0\}$ denote the weights of the circle action at $P$. Let $\k0$ be the index of $(\M,\J)$ and suppose that $N$ divides $\k0$. Then, for $k > n$ we have the following relations of products of Eisenstein series
    \begin{equation}\label{product eisenstein}
    \sum_{I \in P_{n}(k)}\left(\sum_{ P \in \M^{S^{1}}}\frac{m_{I}\left(w_{1}(P),\dots,w_{n}(P)\right)}{w_{1}(P)\cdots w_{n}(P)}\right)G_{I,N}(\tau) = 0,
    \end{equation}
    where $P_{n}(k)$ is the set of all partitions of $k$ with at most $n$ parts, $m_{I}(x_{1},\dots,x_{n})$ denotes the monomial symmetric polynomial\footnote{We recall that given indeterminates $x_1,\ldots,x_n$ and a sequence of non-negative integers $I=(r_1,\ldots,r_n)$, the \emph{monomial symmetric polynomial} $m_I(x_1,\ldots,x_n)$
    is defined as the sum of all monomials $x^J$, where $J=(j_1,\ldots,j_n)$ ranges over all distinct permutations of $I$,
    and $x^I:=x_1^{r_1}\cdots x_n^{r_n}$.}, and $G_{I,N}(\tau) = G_{j_{1},N}(\tau)\cdots G_{j_{n},N}(\tau)$ for $I = [j_{1},\dots,j_{n}]$.
\end{thm}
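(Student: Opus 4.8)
The plan is to compute the $S^1$-equivariant elliptic genus of level $N$ by localization, read off its Laurent expansion in the equivariant variable, and then invoke rigidity to force every non-constant coefficient to vanish; the coefficient of the positive power $x^{k-n}$ will be exactly the relation \eqref{product eisenstein}.

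First I would apply the Atiyah--Bott--Berline--Vergne localization formula to the equivariant elliptic genus $\varphi_N^{S^1}(\M)$. Since the fixed points are isolated, this writes the genus as a finite sum over $P \in \M^{S^1}$ whose summand is the complete contribution $\prod_{j=1}^n \Phi(w_j(P)\,x,\tau)$, where $x$ is the equivariant parameter (so that $t=\e{x}$ in the Laurent-polynomial description of the introduction) and $\Phi(\cdot,\tau)$ is the universal building block of the level-$N$ genus, each weight contributing one factor. The hypothesis $N \mid \k0$ means $c_1$ is divisible by $N$, which is precisely the condition under which Theorem~\ref{rigidity} guarantees that $\varphi_N^{S^1}(\M)$ is independent of $x$.

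The decisive input is the Laurent expansion of the building block,
\[
\Phi(y,\tau) = \frac{1}{y} + \sum_{k \geq 1} G_{k,N}(\tau)\, y^{k-1},
\]
which I would verify by comparing the $q$-expansion of the right-hand side, assembled from \eqref{eisenstein expansion}, with the theta quotient defining $\Phi$; the factor $1/(k-1)!$ in \eqref{eisenstein expansion} is exactly what collapses $\sum_{k}G_{k,N}(\tau)y^{k-1}$ into the logarithmic-derivative-type closed form occurring in the genus. Granting this and setting $G_{0,N}:=1$, I would multiply out
\[
\prod_{j=1}^n \Phi(w_j(P)\,x,\tau) = \frac{x^{-n}}{w_1(P)\cdots w_n(P)}\sum_{k \geq 0} x^{k}\sum_{I \in P_n(k)} G_{I,N}(\tau)\, m_I\!\left(w_1(P),\dots,w_n(P)\right),
\]
the partitions with at most $n$ parts and the monomial symmetric polynomials $m_I$ arising precisely from grouping the monomials $\prod_j w_j(P)^{a_j}$ with $\sum_j a_j = k$ according to their underlying partition, the zero parts being absorbed by $G_{0,N}=1$.

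Summing over the fixed points yields
\[
\varphi_N^{S^1}(\M)(x,\tau) = x^{-n}\sum_{k\geq 0} x^{k}\sum_{I\in P_n(k)} G_{I,N}(\tau)\sum_{P\in \M^{S^1}} \frac{m_I\!\left(w_1(P),\dots,w_n(P)\right)}{w_1(P)\cdots w_n(P)},
\]
and rigidity says the left-hand side is constant in $x$, so the coefficient of every power $x^{m}$ with $m\neq 0$ vanishes. For $k>n$ the exponent $m=k-n$ is strictly positive, and its vanishing is exactly \eqref{product eisenstein} (the powers $m<0$ give the cancellation of poles and the power $m=0$ gives the genus itself, neither of which is claimed here). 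I expect the main obstacle to be the displayed expansion of $\Phi$: pinning the normalization down so that the coefficients are the Eisenstein series $G_{k,N}$ \emph{on the nose} rather than up to scalars requires carefully tracking the shift by $1/N$ coming from $L=(\wedge^n T^*\M)^{1/N}$ and the cusp at which rigidity is formulated. Once that identity is in hand, the localization and the rigidity theorem reduce the argument to the bookkeeping above.
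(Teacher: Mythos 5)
Your proposal is correct and follows essentially the same route as the paper's own proof: the paper likewise writes the equivariant elliptic genus by localization as $\sum_{P}\prod_{j}F_{N}\bigl(2\pi i\, w_{j}(P)z\bigr)$ with $F_{N}(x)=\mathcal{Q}_{N}(x)/x$ (your $\Phi$), identifies the Laurent coefficients of this building block with the Eisenstein series $G_{k,N}$ (this is exactly the paper's Lemma~\ref{lemma eisenstein}, proved via Zagier's expansions, which disposes of the normalization issue you flag as the main obstacle), and then invokes Theorem~\ref{rigidity} to kill every non-constant Taylor coefficient in the equivariant variable, the coefficient of $z^{k-n}$ being precisely \eqref{product eisenstein}.
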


The crucial idea of the proof of Theorem~\ref{main number theory} is
as follows: Since $\M$ is endowed with a circle action, we can
consider the equivariant elliptic genus $\varphi_{N}(\M,t)$ of level
$N$ associated to $\M$, which depends on an additional parameter $t
\in S^{1}$. The rigidity theorem (see Theorem~\ref{rigidity} below)
states that if $N$ divides the index $\k0$, then $\varphi_{N}(\M,t)$
is actually independent of $t$. In particular, if we consider the
Laurent expansion of $\varphi_{N}(\M,t)$ around $t = 1$, then all
coefficients apart from the constant term vanish identically. On the
other hand, the Laurent coefficients are essentially given by the
linear combinations of products of Eisenstein series from
Theorem~\ref{main number theory}. We refer the reader to
Section~\ref{section number theory proofs} for the details of the
proof.

\begin{rmk}
If it is known that the elliptic genus $\varphi_N(\M, t)$ vanishes
identically (see Theorem \ref{rigidity} and Proposition \ref{toric
vanishing} for cases in which this automatically happens), then in
the above theorem $k$ can be taken to be at least  $n$.
For $k<n$ equation \eqref{product eisenstein} does not give
meaningful relations, as a short argument involving the localization
formula in equivariant cohomology shows that the coefficient of
$G_{I,N}(\tau)$ is zero for every partition $I\in P_n(k)$.
\end{rmk}

We finish this section with an example illustrating the kinds of relations obtained from Theorem~\ref{main number theory}.

\begin{exm}
        Let $\M = \C P^{2}$ and $N = 3$. Then $\M$ is endowed with an $S^{1}$-action having three fixed points $P,Q,R$ with weights
        \[
        w_{1}(P) = x, \, w_{2}(P) = y, \qquad w_{1}(Q) = -x, \, w_{2}(Q) = y-x, \qquad w_{1}(R) = -y,\, w_{2}(R) = x-y.
        \]
        Here $x,y \in \Z$ can be chosen arbitrarily as long as no weight equals zero. We obtain from Theorem~\ref{main number theory} the relations
        \[
        \sum_{I \in P_{2}(k)}\left(\frac{m_{I}(x,y)}{xy} + \frac{m_{I}(-x,y-x)}{(-x)(y-x)} + \frac{m_{I}(-y,x-y)}{(-y)(x-y)}\right)G_{I,3}(\tau) = 0
        \]
        for all $k > 2$. The expression in the big brackets can be evaluated using a computer algebra system. Explicitly, we get the following relations:
        \begin{align*}
         4G_{1,3}G_{3,3}+G_{2,3}^{2} + 5G_{4,3} &= 0,\\
         - G_{2,3}G_{3,3}+ G_{5,3}  &= 0, \\
         4G_{1,3}G_{5,3} + 2G_{2,3}G_{4,3}+G_{3,3}^{2}+7G_{6,3} &= 0 ,\\
         -G_{2,3}G_{5,3}-G_{3,3}G_{4,3} + 2G_{7,3} &= 0.
        \end{align*}
        We have checked these identities using the Fourier expansions of the Eisenstein series $G_{k,3}$.
        We also refer the reader to Proposition~\ref{proposition general relation CPn} for more general relations between products of Eisenstein series coming from the rigidity of the elliptic genus of $\C P^{n}$.
    \end{exm}
        \subsection{Outline of the paper}
    As this paper is aimed to be for a general audience, in Section \ref{background} we write an extensive introduction to the subject and recall all of the necessary concepts.
    In particular, in Subsection \ref{generalities s1 action} some generalities about $S^1$-actions on almost complex and symplectic manifolds are recalled; in Subsection \ref{eckt} some facts
    about the equivariant cohomology and K-theory ring are given, especially the so-called localization formulas in both settings.  Subsection \ref{section modular forms} contains a short introduction to modular forms and the definition of Eisenstein series. In Subsection \ref{genera} we recall what genera associated to a power series are, and finally in Subsection \ref{def elliptic genus} we define what the elliptic genus of level $N$ is. In Subsection \ref{value cusps} we recall how the elliptic genus recovers important information about the $\chi_y$-genus and the indices of some tensor powers of the line bundle $(\wedge^n T^*\M)^{\frac{1}{N}}$. The interpretation of the elliptic genus as the topological index of an infinite tensor product is in Subsection \ref{vvb}.
 We define the type of the action, as well as what it means for an action to be $N$-balanced, in Subsection \ref{type of action}. We also give an alternative proof of Proposition \ref{balanced index},
 already known in the literature, which identifies for which integers $N$ an action is $N$-balanced.  Section \ref{section rigidity} is devoted to describing what rigidity of the elliptic genus means, and the rigidity theorem is recalled here.
 
 In Section \ref{number theory to geometry} we use the rigidity of the elliptic genus to derive topological results about compact symplectic manifolds endowed with a Hamiltonian circle action
 with isolated fixed points. In particular, in Subsection \ref{number theory to geometry I} we first specialize to toric actions and prove Proposition \ref{toric vanishing} which asserts that, for every positive integer $N$ dividing the index of
 the manifold, the elliptic genus of level $N$ vanishes identically. Proposition \ref{divisibility chiy genus} gives a divisibility criterion for the $\chi_y$-genus which, together with Proposition \ref{toric vanishing}, are the key ingredients for the proof of Corollary \ref{Betti toric}, which is given on page \pageref{proof corollary}. In subsubsection \ref{section reflexive} we recall some generalities
 about reflexive polytopes and translate Corollary \ref{Betti toric} in this setting: this is the content of Corollary \ref{reflexive h vector}.
 In Subsection \ref{closer look} we specialize to the case in which the index is maximal. Before doing so we introduce some polynomials, see equation \eqref{Hmk}, which generalize the so-called Hilbert polynomial and play a key role in the proof of Theorem \ref{main geometry}. We prove some of their symmetries (Proposition \ref{symmetries H}) and compute them, as an example, for the complex projective space (see Proposition \ref{projective space H}). In Proposition \ref{fixed points and polynomials} we show what the relation between these new polynomials and the
 number of fixed points is. Theorem \ref{main 2} is the key ingredient for the proof of Theorem \ref{main geometry} and implies that, for a compact symplectic manifold of dimension $2n$ acted on by a circle in a Hamiltonian way and with isolated fixed points, having index $\k0=n+1$ and vanishing elliptic genus of level $n+1$ implies the number of fixed points, and hence the Euler characteristic, to be $n+1$. To conclude the proof of Theorem \ref{main geometry}, which is given on page \pageref{proof main geometry}, we then need Theorem \ref{main 3}, whose proof combines results of Hattori \cite{Ha}, Tolman \cite{T} and Charton \cite{C}.

In Section \ref{section number theory proofs} we give a proof of Theorem \ref{main number theory}. Then in Subsection \ref{section coadjoint orbit}, after recalling some standard
facts, we give a formula for computing the coefficients
appearing in Theorem \ref{main number theory} whenever $\M$ is a coadjoint orbit (Proposition \ref{formula divided}). Finally, in Subsection \ref{explicit}, we compute the relations among Eisenstein series given by Theorem \ref{main number theory} when $\M$ is the complex projective space.
%%%%%%
\section{Background}\label{background}
\subsection{The \texorpdfstring{$S^1$}{Lg}-actions on almost complex and symplectic manifolds}\label{generalities s1 action}
In this subsection we recall some standard facts about circle actions on almost complex and symplectic manifolds.

Let $\ac$ be an almost complex manifold of dimension $2n$ endowed with a circle action that preserves $\J$. This means that the endomorphism $\J\colon T\M \to T\M$
is equivariant with respect to the action of $S^1$ induced on the tangent bundle $T\M$.
Henceforth we assume that the $S^1$-action on $\ac$ has fixed points, and denote the set of fixed points by $\fix$.  Similarly, for every subgroup $\Z_k$ of $S^1$,
we denote the set of points whose stabilizer is $\Z_k$ by $\M^{\Z_k}$.
We recall that, given a fixed point $P\in \M^{S^1}$, there exist complex coordinates $z_1,\ldots,z_n$ on $T\M|_P\simeq \C^n$
and integers $w_{1}(P),\ldots,w_{n}(P)$ such that the $S^1$-action on $T\M|_P$ is given by
$$
S^1 \ni \lambda \cdot (z_1,\ldots,z_n) = \left(\lambda^{w_{1}(P)}z_1,\ldots,\lambda^{w_{n}(P)}z_n\right).
$$
Such integers are called the \emph{weights of the $S^1$-action} at the fixed point $P$. Note that $P\in \fix$ is an isolated fixed point if and only if none of its weights is zero; this is a consequence of the fact that, after choosing an $S^1$-invariant metric on $\M$, in a neighborhood $U\subset \M$ of $P$ the exponential map with respect to this metric intertwines the $S^1$-action on $T\M|_P$ with that on $U$.

If $\M$ carries additional structure, then we require that such structure is preserved by the circle action. For instance, let $(\M,\omega)$ be a symplectic manifold, which in this article is
always assumed to be compact and connected. If $S^1$ acts on it, then we require the $S^1$-family of diffeomorphisms to be indeed symplectomorphisms.
This translates into the following formula: let $\xi$ be a vector in $\mathrm{Lie}(S^1)$ and $\xi^\#$ the corresponding vector field on $\M$. Then the flow of diffeomorphisms associated to $\xi^\#$
is a flow of symplectomorphisms if and only if
$$
d\left(\iota_{\xi^\#}\omega\right) = 0\,.
$$
In the case in which the closed form above is exact, namely if there exists $\psi\colon \M\to \R$ such that $\iota_{\xi^\#}\omega = d\,\psi$, then the $S^1$-action is called \emph{Hamiltonian}
and the function $\psi$ the \emph{moment map} of the action.
If there is a whole compact torus $\T$ acting on $(\M,\omega)$ via symplectomorphisms, then the notion of Hamiltonian action generalizes to this case in the
following way.
\begin{defin}\label{hamiltonian t action}
Let $(\M,\omega)$ be a symplectic manifold and let $\T$ be a compact torus acting on it via symplectomorphisms with Lie algebra $\mathfrak{t}$. Then the $\T$-action is called \emph{Hamiltonian} if there exists a map $\psi\colon \M\to \mathfrak{t}^*$ such that the following conditions hold:
\\
\noindent $\bullet $ $\psi$ is $\T$ invariant;\\
$\bullet$ for every
$\xi \in \mathfrak{t}$ the following identity holds:
\begin{equation}\label{moment map}
\iota_{\xi^\#}\omega = d \psi^\xi\,,
\end{equation}
\\
\noindent
where $\xi^\#$ is the vector field corresponding to the Lie algebra element $\xi$ and the function $\psi^\xi$ is given by $\psi^\xi(P):=\langle \psi(P),\xi \rangle$ (here $\langle \cdot , \cdot \rangle$ denotes the evaluation between $\mathfrak{t}^*$ and $\mathfrak{t}$).
\end{defin}
Equation \eqref{moment map} implies that, if $\M$ is compact, then a Hamiltonian action always has fixed points. A lower bound can be found as follows.
Suppose first that the torus is one-dimensional. If the fixed points are isolated, then the moment map is a Morse function with only even indices, and a Morse theory argument implies that
\begin{equation*}\label{Nibi}
b_{2j}(\M)=N_j\,,\quad\text{for all}\;\;j\in\{0,\ldots,n\},
\end{equation*}
where $b_{2j}(\M)$ is the $2j$-th Betti number of $\M$ and $N_j$ is the number of fixed points with $j$ negative weights.
Since for a compact symplectic manifold $\M$ of dimension $2n$ the non-degeneracy of the symplectic form implies $b_{2j}(\M)\neq 0$ for all $j\in\{0,\ldots,n\}$, we conclude that
for a Hamiltonian circle action $|\M^{S^1}|\geq n+1$. The same conclusion holds for actions of tori of higher dimensions: it suffices to restrict the action to a circle subgroup.

Since the vector space tangent to the torus orbits of a Hamiltonian action is isotropic, the largest dimension of a compact torus that acts
on a $2n$-dimensional symplectic manifold is exactly $n$.
\begin{defin}\label{symplectic toric}
A \emph{symplectic toric manifold} is a compact symplectic manifold of dimension $2n$ endowed with the effective, Hamiltonian action of a compact torus $\T$ of dimension $n$.
We denote this space with the triple $(\M,\omega,\psi)$, where $\psi\colon \M \to \mathfrak{t}^*$ and $\mathfrak{t}=\mathrm{Lie}(\T)$.
\end{defin}
By the celebrated Convexity Theorem of Atiyah \cite{A} and Guillemin-Sternberg \cite{GSt}, the image of the moment map is a convex polytope. However, for symplectic toric manifolds this
polytope has very special features, and is called a {\it smooth} (or {\it Delzant}) polytope. These are defined combinatorially as follows.

\begin{defin}\label{delzant}
Let $\Delta\subset \R^n$ be an $n$-dimensional polytope. Then $\Delta$ is called {\it smooth} (or {\it Delzant}) if the following three properties are satisfied:
\begin{itemize}
\item[(D1)] $\Delta$ is \emph{simple}: there are exactly $n$ edges meeting at each vertex;
\item[(D2)] every vertex is \emph{rational}: the lines supporting the $n$ edges meeting at a vertex $v$ are given by $v+ t\cdot w_j$, with $t\in \R$ and $w_j\in \Z^n$ for every $j\in\{1,\ldots,n\}$, for every vertex $v$;
\item[(D3)] every vertex is \emph{smooth}: for each vertex $v$ the vectors $w_1,\ldots,w_n$ above can be chosen to be a $\Z$-basis of $\Z^n$.
\end{itemize}
\end{defin}
Given the Convexity Theorem, it is not hard to see that the image of the moment map of a $2n$-dimensional symplectic toric manifold is a smooth $n$-dimensional polytope. However
a celebrated theorem of Delzant \cite{D} asserts that the polytope characterizes completely the symplectic toric manifold up to equivariant symplectomorphisms. Moreover for any smooth polytope $\Delta$ one can find a unique (modulo equivariant symplectomorphism) symplectic toric manifold whose image of the moment map is exactly $\Delta$.

\subsection{Equivariant cohomology, characteristic classes, and \texorpdfstring{$K$}{Lg}-theory}\label{eckt}
For a more detailed exposition and for proofs of the following facts see for instance \cite{AB, BV,GS2} for equivariant cohomology and characteristic classes and \cite{A2,AS1,AS2,AS3} for
equivariant $K$-theory.
We recall that, given a manifold $\M$ acted on continuously by a circle $S^1$, according to the Borel model the \emph{equivariant cohomology ring} $H^*_{S^1}(\M;R)$ of $\M$ with coefficients in the ring $R$
 is defined to be the ordinary cohomology ring of the orbit space $\M \times_{S^1} E S^1$, where $E S^1$ is a contractible space on which $S^1$ acts freely, and the $S^1$-action on
$\M\times E S^1$ is the diagonal one. The space $E S^1$ can, for instance,  be chosen to be the unit sphere
$S^\infty$ in $\C^\infty$; however $E S^1$ is unique up to homotopy equivalence.
For instance we have that
\begin{equation}\label{restriction p}
H^*_{S^1}(pt; R)=H^*\left(S^\infty/S^1;R\right)=H^*\left(\C P^\infty;R\right)=R[x],
\end{equation}
where $R$ is the coefficient ring and $x$ has degree two.
Similarly, if we have a compact torus $\T$ of dimension $m$, then $H^*_\T(\M;R)$ is defined to be $H^*(\M\times_\T E \T)$, where $E \T$ can be chosen to be $(S^\infty)^m$. Hence
$H^*_\T(pt;R)=R[x_1,\ldots,x_m]$, where each $x_j$ has degree two.
If the ring $R$ is a field, then $H^*_\T(pt;R)$ is identified with the symmetric algebra on $\mathfrak{t}^*$, where $\mathfrak{t}$ is the Lie algebra of $\T$, and is denoted
by $\mathbb{S}(\mathfrak{t}^*)$.

Any $S^{1}$-equivariant map between two spaces induces a pull-back map in equivariant cohomology. For instance, let $\M^{S^{1}}$ denote the fixed point set, which we are assuming to be non-empty. Then the ($S^{1}$-equivariant) inclusion $\iota\colon \M^{S^{1}} \hookrightarrow \M$ induces a map
$\iota^*\colon H_{S^{1}} ^*(\M;R)\to H_{S^{1}}^*(\M^{S^{1}};R)$ which is well-understood in many cases; for example,
if $\M$ can be endowed with a Hamiltonian torus action, then Kirwan \cite{K} proved that $\iota^*$ is injective.
Note that $H_{S^{1}}^*(\M^{S^{1}};R)$ is an easier object to handle, and if in particular the set of
fixed points consists of finitely many elements, then $H_{S^{1}}^*(\M^{S^{1}};R)=\oplus_{P\in \M^{S^{1}}} R[x]$. Henceforth the restriction of an equivariant cohomology class $\alpha$
to a fixed point $P$ is denoted by $\alpha(P)$.

The equivariant cohomology ring often recovers properties of the non-equivariant one. Indeed, one can consider the inclusion map $\{e\}\hookrightarrow S^{1}$, where
$e$ denotes the identity element in $S^{1}$, observe that $H_{\{e\}}^*(\M;R)$ is just the ordinary cohomology ring, and study the corresponding pull-back
\begin{equation*}\label{restriction r}
r\colon H_{S^{1}}^*(\M;R)\to H^*(\M;R).
\end{equation*}
Often this map is well-understood too; for instance, in the Hamiltonian case, it is surjective \cite{K}.
Observe that if $\M$ is a point, then $r$ is given by evaluation at $x=0$.

Just as for the ordinary cohomology ring, there is a push-forward map in equivariant cohomology, also called an ``integration map''
\begin{equation}\label{integration}
\int_\M \colon H_{S^{1}}^*(\M;R)\to H_{S^{1}}^{*-\dim(\M)}(pt;R)
\end{equation}
which comes from integration along the fibers of the projection onto the second factor $\M\times_{S^{1}} E S^{1} \to \C P^\infty$.
Observe that the integral of an equivariant cohomology class of degree higher than the dimension of $\M$ is not necessarily zero.
In order to compute \eqref{integration} there is a very useful formula due to Atiyah-Bott \cite{AB} and Berline Vergne \cite{BV}, which is referred to
as the localization formula (in equivariant cohomology), and is as follows. For the sake of brevity we only describe it if $\M$
is almost complex and the fixed points are isolated; however it holds more generally for oriented manifolds and any type of fixed point set.
\begin{lemma}
Let $\ac$ be a compact almost complex manifold of dimension $2n$ acted on by a circle with discrete fixed point set $\M^{S^1}$. Let $\alpha$ be an equivariant cohomology class. Then we have
\begin{equation}\label{ABBV}
\int_\M \alpha = \sum_{P\in \M^{S^1}}\frac{1}{x^n}\frac{\alpha(P)}{w_1(P)\cdots w_n(P)},
\end{equation}
where $x$ is as in \eqref{restriction p}.
\end{lemma}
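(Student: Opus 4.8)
The plan is to derive \eqref{ABBV} from the Atiyah--Bott localization theorem together with the self-intersection formula, which between them reduce the integral over $\M$ to a sum of contributions supported at the isolated fixed points. I would work throughout with coefficients in a field of characteristic zero, say $R = \Q$, so that the weights $w_j(P)$ become invertible.

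First I would invoke the localization theorem in equivariant cohomology: if $Y$ is a compact $S^1$-space whose fixed-point set is empty, then $H_{S^1}^*(Y;\Q)$ is a torsion module over $H_{S^1}^*(pt;\Q) = \Q[x]$, and hence vanishes after inverting $x$. The idea is to stratify $Y$ by orbit type, the isotropy on each stratum being a finite cyclic subgroup of $S^1$, and to use the associated Gysin and Mayer--Vietoris sequences to see that multiplication by $x$ acts locally nilpotently. Applying this to the open complement $\M \setminus \M^{S^1}$ and feeding the result into the long exact sequence of the pair $(\M,\M^{S^1})$ shows that the restriction
\[
\iota^*\colon H_{S^1}^*(\M;\Q)\otimes_{\Q[x]}\Q[x,x^{-1}] \longrightarrow H_{S^1}^*(\M^{S^1};\Q)\otimes_{\Q[x]}\Q[x,x^{-1}]
\]
is an isomorphism.

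Next I would make the inverse of $\iota^*$ explicit. For an isolated fixed point $P$ with inclusion $\iota_P\colon \{P\}\hookrightarrow \M$ and Gysin pushforward $\iota_{P*}$, the self-intersection formula gives $\iota_P^*\iota_{P*}(1) = e(\nu_P)$, the equivariant Euler class of the normal bundle. Since $P$ is isolated the normal bundle is all of $T\M|_P \simeq \C^n$, on which $S^1$ acts with weights $w_1(P),\dots,w_n(P)$; as the equivariant Euler class of a weight-$w$ complex line is $wx$, this yields $e(\nu_P) = x^n\, w_1(P)\cdots w_n(P)$, a unit in the localized ring. Because distinct isolated fixed points have disjoint support, restricting $\sum_{Q}\iota_{Q*}(\cdot)$ to $P$ picks out only the $Q=P$ term, so the assignment $(\beta_P)_P \mapsto \sum_P \iota_{P*}(\beta_P/e(\nu_P))$ is a right, hence two-sided, inverse of $\iota^*$. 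Consequently every class $\alpha$ satisfies
\[
\alpha = \sum_{P\in \M^{S^1}}\iota_{P*}\!\left(\frac{\alpha(P)}{e(\nu_P)}\right)
\]
in $H_{S^1}^*(\M;\Q)\otimes_{\Q[x]}\Q[x,x^{-1}]$.

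Finally I would apply the integration map $\int_\M = \pi_*$ for the projection $\pi\colon \M\to pt$. By functoriality of the pushforward, $\pi_*\iota_{P*} = (\pi\circ\iota_P)_*$, and since $\pi\circ\iota_P\colon\{P\}\to pt$ is the identity of a point its pushforward is the identity; hence
\[
\int_\M\alpha = \sum_{P\in\M^{S^1}}\frac{\alpha(P)}{e(\nu_P)} = \sum_{P\in\M^{S^1}}\frac{1}{x^n}\frac{\alpha(P)}{w_1(P)\cdots w_n(P)},
\]
which is \eqref{ABBV}. The main obstacle is the localization theorem itself, namely establishing that the fixed-point-free part contributes only torsion and therefore dies after inverting $x$; once this structural input is granted, the self-intersection computation and the functoriality of the pushforward are essentially formal.
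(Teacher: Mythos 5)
Your proposal is correct: it is the classical Atiyah--Bott localization argument (torsion of the fixed-point-free part over $\Q[x]$, self-intersection formula $\iota_P^*\iota_{P*}(1)=e(\nu_P)=x^n\,w_1(P)\cdots w_n(P)$, functoriality of the pushforward), which is exactly the proof in the references \cite{AB,BV} that the paper itself cites, since the paper states this lemma without proof.
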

Consider now a complex vector bundle $V \to \M$ which is endowed with a compatible $S^1$-action, namely an action that makes
the projection map equivariant.
Then its \emph{equivariant Chern classes} are defined as the ordinary Chern classes,
of the associated bundle $V \times_{S^1} ES^1 \to \M \times_{S^1} ES^1$.
If $V$ is the tangent bundle $T\M$, then, using functoriality of Chern classes it is not hard to
prove that, given a fixed point $P$ with weights $w_1(P),\ldots, w_n(P)$, the restriction of the $j$-th equivariant Chern class $c_j^{S^1}$ of the tangent bundle $T\M$ to $P$ is given by
\begin{equation}\label{restriction chern}
c_j^{S^1}(P)=e_j(w_1(P),\ldots,w_n(P))x^j\,,
\end{equation}
where $e_j(x_1,\ldots,x_n)$ denotes the $j$-th elementary symmetric polynomial in $x_1,\ldots,x_n$.

Since the restrictions of equivariant Chern classes to ordinary cohomology are the ordinary Chern classes, formula \eqref{ABBV} can be very useful to
compute Chern numbers of $\M$ too. Let $c_1,\ldots,c_n$ denote the Chern classes of the tangent bundle of $(\M,\J)$, where $c_j\in H^{2j}(\M;\Z)$,
and $P(n)$ the set of partitions $\lambda = [\lambda_{1},\dots,\lambda_{k}]$ of $n$, i.e., $\lambda_{1} \geq \dots \geq \lambda_{k} > 0$ with $\lambda_{1} + \ldots + \lambda_{k} = n$. For each  $\lambda\in P(n)$ the \emph{Chern number} of $(\M,\J)$ associated to $\lambda$ is given by
    \begin{equation}\label{chern number def}
    C_{\lambda}(\M) = \int_{\M}c_{\lambda_{1}}\cdots c_{\lambda_{k}} \in \Z.
    \end{equation}
  Using formula \eqref{ABBV} it is easy to show that
  \begin{equation}\label{chern number weights}
  C_\lambda(\M)=\sum_{P\in \M^{S^1}}\frac{e_{\lambda_1}(w_1(P),\ldots,w_n(P))\cdots e_{\lambda_k}(w_1(P),\ldots,w_n(P))}{w_1(P)\cdots w_n(P)}\,.
  \end{equation}

The $K$-theory (resp. equivariant $K$-theory) ring $S^1$,  is given a manifold $\M$ acted on by a circle, the abelian group associated
to the semigroup of isomorphism classes of complex vector bundles (resp. complex vector bundles endowed with a compatible $S^1$-action), endowed with the operations
of direct sum and tensor product. It is denoted by $K(\M)$ (resp. $K_{S^1}(\M)$); for instance $K(pt)\simeq \Z$ and $K_{S^1}(pt)\simeq \Z[t,t^{-1}]$.
The inclusion $\{e\}\hookrightarrow S^1$ induces a map $K_{S^1}(\M)\to K(\M)$ which, in the case in which $\M$ is a point, is the evaluation at $t=1$.
The topological index (resp. equivariant index) of a bundle $V$ --regarded as an element of $K(\M)$-- is a push forward map
$
\ind\colon K(\M) \to K(pt)\simeq \Z
$
(resp. $\ind_{S^1}\colon K_{S^1}(\M)\to K_{S^1}(pt)\simeq \Z[t,t^{-1}]$), as defined in \cite{AS2}. Its computation can be carried out using the Atiyah-Singer formula \cite{AS3}
$$
\ind(V)=\int_\M \text{Ch}(V) \text{Todd}(\M),
$$
where $\text{Ch}\colon K(V)\to H^*(\M)$ denotes the Chern character homomorphism and $\text{Todd}(\M)$ the total Todd class of $\M$.
 As for the equivariant index there is, in analogy with equivariant cohomology, a localization
formula that allows to compute it, as proved by Atiyah and Segal in \cite{AS1}. As above, for simplicity we assume here that $\M$ is almost complex
and the fixed point set is discrete.
\begin{lemma}\label{localization K}
Let $\ac$ be a compact almost complex manifold of dimension $2n$ acted on by a circle with isolated fixed points.
Given an equivariant bundle $V\in K_{S^1}(\M)$, its equivariant index is given by
\begin{equation}\label{index K}
\ind_{S^1}(V)=\sum_{P\in \M^{S^1}} \frac{V(P)}{\prod_{j=1}^n \left(1-t^{-w_j(P)}\right)}\in \Z\left[t,t^{-1}\right]\,.
\end{equation}
\end{lemma}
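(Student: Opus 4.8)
The plan is to prove this as a special case of the Atiyah--Segal localization theorem in equivariant $K$-theory \cite{AS1}, reducing the pushforward over $\M$ to a sum over the fixed-point set. By definition the equivariant index is the Gysin pushforward $\pi_{!}\colon \Ke(\M)\to \Ke(pt)\simeq \Z[t,t^{-1}]$ associated to the constant map $\pi\colon \M\to pt$. Writing $\iota\colon \fix\hookrightarrow \M$ for the inclusion and $\pi'=\pi\circ\iota$, covariant functoriality of the pushforward gives $\pi'_{!}=\pi_{!}\circ\iota_{!}$. The whole point is therefore to invert $\iota_{!}$ and to evaluate $\pi'_{!}$ over $\fix$, which is a finite set of points.

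First I would invoke the localization theorem: the map $\iota_{!}\colon \Ke(\fix)\to \Ke(\M)$ becomes an isomorphism after inverting the $K$-theoretic Euler classes of the normal bundles of the fixed components, equivalently after tensoring $\Z[t,t^{-1}]$ with its field of fractions $\Q(t)$. This is the deep input, and I expect it to be the main obstacle: it rests on the description of $\Ke(\M)$ as a finitely generated $\Ke(pt)$-module whose support, together with that of the kernel and cokernel of $\iota_{!}$, is controlled by the isotropy representations, so that these cokernels vanish over the generic point $\Q(t)$. In the present setting this is standard and I would simply cite it.

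Second, I would make $\iota_{!}^{-1}$ explicit through the self-intersection formula. For an isolated fixed point $P$ the normal bundle of $\{P\}$ in $\M$ is the full tangent space $T_P\M$, an $S^1$-representation with weights $w_1(P),\dots,w_n(P)$, so $T_P^{*}\M$ has weights $-w_1(P),\dots,-w_n(P)$. Since the images of distinct fixed points are disjoint, the composite $\iota_P^{*}\circ(\iota_P)_{!}$ on $\Ke(P)\simeq \Z[t,t^{-1}]$ is multiplication by the $K$-theoretic Euler class
\[
\lambda_{-1}(T_P^{*}\M)=\sum_{i=0}^{n}(-1)^{i}\bigl[\wedge^{i}T_P^{*}\M\bigr]=\prod_{j=1}^{n}\bigl(1-t^{-w_j(P)}\bigr),
\]
which is invertible in $\Q(t)$ precisely because no weight vanishes. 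Hence for $V\in \Ke(\M)$ the component of $\iota_{!}^{-1}V$ at $P$ is $V(P)\big/\prod_{j=1}^{n}(1-t^{-w_j(P)})$.

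Finally, since $\fix$ is a finite set of points and $\pi'_{!}$ restricts on each of them to the identity of $\Ke(pt)$, applying $\pi'_{!}$ amounts to summing over $P\in\fix$, which yields
\[
\ind_{S^1}(V)=\pi_{!}(V)=\pi'_{!}\bigl(\iota_{!}^{-1}V\bigr)=\sum_{P\in\fix}\frac{V(P)}{\prod_{j=1}^{n}\bigl(1-t^{-w_j(P)}\bigr)}.
\]
This equality a priori holds in $\Q(t)$; but the left-hand side is the genuine pushforward and hence already lies in $\Z[t,t^{-1}]$, which shows that the a priori rational right-hand side is in fact the asserted Laurent polynomial, giving the integrality statement for free.
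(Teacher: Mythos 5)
Your proposal is correct and is essentially the paper's own approach: the paper gives no proof of this lemma but quotes it as the Atiyah--Segal localization theorem (its reference \cite{AS1}), and your argument is precisely the standard derivation of that result --- Segal's localization over the fraction field $\Q(t)$ of $R(S^1)\simeq\Z[t,t^{-1}]$, the self-intersection formula identifying $\iota_P^*(\iota_P)_!$ with multiplication by the Euler class $\prod_{j=1}^n\bigl(1-t^{-w_j(P)}\bigr)$ (nonzero because the fixed points are isolated), and integrality of the genuine pushforward. In short, you have supplied the proof that the paper outsources to the literature, so there is nothing further to reconcile.
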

Just for equivariant cohomology classes, given a bundle $V$ over $\M$ endowed with an $S^1$-action compatible with the projection, to compute its non-equivariant index
one can use formula \eqref{index K}, namely
\begin{equation}\label{index}
\ind(V)=\lim_{t\to 1}\sum_{P\in \M^{S^1}} \frac{V(P)}{\prod_{j=1}^n \left(1-t^{-w_j(P)}\right)}\,.
\end{equation}

\subsection{Modular forms for \texorpdfstring{$\Gamma_{1}(N)$}{Lg}}\label{section modular forms}
    We recall some basic facts about modular forms for the congruence subgroup $\Gamma_{1}(N)$, roughly following Appendix I of \cite{HBJ}. Note that the index of $\Gamma_{1}(N)$ in $\SL_{2}(\Z)$ is finite.
    The group $\SL_{2}(\Z)$ acts on the upper half-plane $\H$ by fractional linear transformations
    \[
    M\tau := \tfrac{a\tau + b}{c\tau +d}
    \]
    for $M = \left(\begin{smallmatrix}a & b \\ c & d \end{smallmatrix} \right) \in \SL_{2}(\Z)$, and this action extends to $\Q \cup \{\infty\}$ if we set $M \infty := \frac{a}{c}$ with $\frac{\pm 1}{0} = \infty$. The classes $\Gamma_{1}(N) \backslash (\Q\cup\{\infty\})$ are called the \emph{cusps} of $\Gamma_{1}(N)$. Since $\Gamma_{1}(N)$ has finite index in $\SL_{2}(\Z)$, it also has finitely many cusps.

    The geometric significance of the cusps is the fact that the quotient $\Gamma_{1}(N)\backslash \H$ is a non-compact Riemann surface which can be compactified by adding the finitely many cusps $\Gamma_{1}(N) \backslash (\Q\cup\{\infty\})$.

    A \emph{modular form} of weight $k \in \Z$ for $\Gamma_{1}(N)$ is a holomorphic function $f: \H \to \C$ which transforms as
    \[
    f(M\tau) = (c\tau+d)^{k}f(\tau)
    \]
    for all $M = \left(\begin{smallmatrix}a & b \\ c & d \end{smallmatrix} \right) \in \Gamma_1(N)$ and which is \emph{holomorphic at the cusps}, which means that for every $M \in \SL_{2}(\Z)$ the function $f|_{k}M(\tau) := (c\tau+d)^{-k}f(M\tau)$ has a \emph{Fourier expansion} of the form
    \begin{align}\label{eq fourier expansion cusp}
    f|_{k}M (\tau)= \sum_{\substack{n \in \Q_{0}^+ }}c_{M}(n)q^{n}
    \end{align}
    with coefficients $c_{M}(n) \in \C$ and $q := e^{2\pi i \tau}$. If $M\infty = \frac{a}{c}$, then we call the constant term $c_{M}(0)$ of $f|_{k}M$ the \emph{value of $f$ at the cusp $\frac{a}{c}$} and denote it by $f(\frac{a}{c})$. It is independent of the particular choice of $M$ with $M\infty = \frac{a}{c}$, and it can be computed as the limit
    \[
    f\left(\tfrac{a}{c}\right)= \lim_{\tau \to i\infty}f|_{k}M(\tau),
    \]
    or, equivalently, by taking the limit of $f|_{k}M$ as $q \to 0$ in the Fourier expansion \eqref{eq fourier expansion cusp}.

    We let $M_{k}(\Gamma_{1}(N))$ be the complex vector space of all modular forms of weight $k$ for $\Gamma_{1}(N)$. It is well-known that $M_{k}(\Gamma_{1}(N)) = \{0\}$ for $k \leq 0$, $M_{0}(\Gamma_{1}(N)) = \C$, and that $M_{k}(\Gamma_{1}(N))$ is finite-dimensional for all $k \in \N$.
    Furthermore, we let
    \[
    M(\Gamma_{1}(N)) := \bigoplus_{k = 0}^{\infty}M_{k}(\Gamma_{1}(N))
    \]
be the graded ring of all modular forms for $\Gamma_{1}(N)$.

    There are many ways to construct interesting examples of modular forms for $\Gamma_{1}(N)$. Here we consider the \emph{Eisenstein series}
    \begin{align}\label{definition eisenstein}
    G_{k,N}(\tau) := -\frac{1}{(2\pi i)^{k}}\sum_{(m,n) \in \Z^{2}\setminus \{(0,0)\}}\frac{\zeta_{N}^{m}}{(m\tau + n)^{k}} \in M_{k}(\Gamma_{1}(N))
    \end{align}
    for $N \geq 2$ and $k \geq 1$.  Actually, the defining series only converge if $k \geq 3$, but one can also make sense of the series for $k \in \{1,2\}$ (using the so-called Hecke trick) to obtain Eisenstein series of weight one and two. The Fourier expansion of $G_{k,N}$ at $\infty$ is given by \eqref{eisenstein expansion}.

    \subsection{Genera associated to formal power series}\label{genera}
We first recall that a \emph{stably almost complex manifold} (also known as \emph{unitary manifold}) is a manifold $\M$ together with a complex structure on the stable tangent bundle
(namely a complex structure on $T\M \oplus (\M\times \R^k)$ for some $k\geq 1$, where $\M\times \R^k$ denotes the trivial real bundle of rank $k$). The \emph{complex cobordism ring}
$\Omega$ is the ring of cobordism classes of stable complex manifolds. A classical result of Milnor \cite{M} and Novikov \cite{N} (see also Stong \cite{Stong}) that is used below asserts that two almost complex manifolds are stably cobordant if and only if their Chern numbers are the same.

    A \emph{complex multiplicative genus} (or just a \emph{genus}) is a ring homomorphism $\varphi: \Omega \otimes \Q \to R$, where $R$ is a given ring; for example, $R$ could be $\Z$ or $\Q$, the polynomial ring $\Q[y]$, or the ring $M(\Gamma_{1}(N))$ of modular forms for $\Gamma_{1}(N)$.

    One can construct genera from normalized formal power series, as we explain now. Let
    \[
    Q(x) = 1 + a_{1}x + a_{2}x^{2} + a_{3}x^{3}+\dots
    \]
    be a {normalized formal power series} with coefficients $a_{k} \in R$. Let $(\M,\J)$ be an almost complex compact manifold of dimension $2n$. The following procedure yields the genus $\varphi_{Q}(\M)$ associated to $Q$:
\\$\;$\\
    \emph{Step 1}: We introduce variables $x_{1},\dots,x_{n}$ to which we assign weight one. Then, for example, $x_{1}x_{2}^{2}$ has weight three. Consider the product
    \begin{align*}
    Q(x_{1})Q(x_{2})\cdots Q(x_{n})
    &= 1 + \sum_{k=1}^{\infty}p_{k}(x_{1},\dots,x_{n}),
    \end{align*}
    where the polynomial $p_{k}$ consists of all summands of weight $k$. The first three polynomials $p_{k}$ are given by
    \begin{align*}
    p_{1} &= a_{1}\left(x_{1} + \dots + x_{n}\right), \\
    p_{2} &= a_{1}^{2}\left(x_{1}x_{2} + x_{1}x_{3} + \dots + x_{n-1}x_{n}\right) + a_{2}\left(x_{1}^{2}+\dots + x_{n}^{2}\right), \\
    p_{3} &= a_{1}^{3}\left(x_{1}x_{2}x_{3} + x_{1}x_{2}x_{4} + \dots\right) + a_{1}a_{2}\left(x_{1}x_{2}^{2} + x_{2}x_{1}^{2} + \dots\right) + a_{3}\left(x_{1}^{3}+\dots+x_{n}^{3}\right).
    \end{align*}
    Note that $p_{k}$ is homogeneous of degree $k$.
\\$\;$\\
    \emph{Step 2}: Each polynomial $p_{k}$ is symmetric, so it can be written as a polynomial in the elementary symmetric polynomials\footnote{Recall that the {\it elementary symmetric polynomials} are defined by $\sigma_{\ell}(x_{1},\dots,x_{n}) \hspace{-17pt}:= \sum_{1 \leq j_{1}< \dots < j_{\ell} \leq n}x_{j_{1}}\cdots x_{j_{\ell}}$. For example, $\sigma_{0} (x_1 \dots, x_n)= 1, \sigma_{1} (x_1 \dots, x_n)= x_{1}+\dots+x_{n},$ and $\sigma_{n} = x_{1}\cdots x_{n}$.} $\sigma_{\ell}$ with $\ell \leq k$, i.e., there is a polynomial $Q_{k}$ in $k$ variables such that
    \[
    p_{k}(x_{1},\dots,x_{n}) = Q_{k}(\sigma_{1}(x_{1},\dots,x_{n}),\dots,\sigma_{k}(x_{1},\dots,x_{n})).
    \]
    Again, we list some explicit formulas for the first few $q_{k}$:
    \begin{align*}
    Q_{1}(y_1) &= a_{1}y_{1}, \\
    Q_{2}(y_1,y_2) &= a_{2}y_{1}^{2}+\left(a_{1}^{2}-2a_{2}\right)y_{2}, \\
    Q_{3}(y_1,y_2,y_3) &= a_{3}y_{1}^{3}+(a_{1}a_{2}-3a_{3})y_{1}y_{2} + \left(a_{1}^{3}+3a_{3}-3a_{1}a_{2}\right)y_{3}.
    \end{align*}
    If we think of $y_{\ell}$ as having weight $\ell$, then the polynomial $Q_{\ell}$ consists only of terms of weight $\ell$. This implies that if the coefficients $a_{k}$ lie in some graded ring $R$ with $a_{k} \in R_{k}$, then the coefficients of $Q_{k}$ lie in $R_{k}$, as well.
\\$\;$\\
    \emph{Step 3}: Since the manifold $\M$ is almost complex, i.e., its tangent bundle has a complex structure, its Chern classes $c_{1}(\M),\dots,c_{n}(\M)$ (which are by definition the Chern classes of the tangent bundle of $\M$) are defined. We plug in $c_{\ell}(\M)$ for $\sigma_{\ell}$ in $Q_{n}$ and integrate over $\M$ to obtain an element of $R$ (even $R_{n}$ if $R$ is graded), which we call the \emph{genus $\varphi_{Q}(\M)$ associated to $Q$}, i.e.,
    \[
    \varphi_{Q}(\M) = \int_{\M}Q_{n}(c_{1}(\M),\dots,c_{n}(\M)).
    \]
\noindent The above procedure is tedious to do by hand, but can easily be done by a computer algebra system.

    \begin{rmk}
        It is sometimes convenient to allow non-normalized power series $Q(x) = a_{0}+a_{1}x+a_{2}x^{2} + \dots$ with $a_{0} \neq 0$. We can go through the same procedure as above and still get a genus associated to $Q$. If $Q$ is not normalized, then the normalized
         series $a_{0}^{-1}Q(a_{0}x)$ yields the same genus as $Q$.
    \end{rmk}

    \begin{rmk}
     We can write $\varphi_{Q}(\M)$ as a linear combination of Chern numbers (see \eqref{chern number def})
    \[
    \varphi_{Q}(\M) = \sum_{\lambda \in P(n)}f_{\lambda}C_{\lambda}(\M)
    \]
    for some $f_{\lambda} \in R$ (even in $R_{n}$ if $R$ is graded). Note that $f_{\lambda}$ is a polynomial in the $a_{k}$'s, which depends on $\lambda$ (hence on $n$), but not on the manifold $\M$. Again, the coefficients $f_{\lambda}$ can easily be computed by a computer algebra system.
    \end{rmk}

    \begin{exm}
        Let $n = 2$. We write
        \[
        Q(x_{1})Q(x_{2}) = 1 + P_{1}(x_{1},x_{2}) +P_{2}(x_{1},x_{2})+ \dots,
        \]
        where
        \begin{equation*}
        P_1(x_1,x_2):=a_1 \left(x_1 +x_2\right), \quad P_2(x_1,x_2):= a_2 \left( x_1^2 +x_2^2\right)+a_1^2x_1x_2.
        \end{equation*}
        In terms of the elementary symmetric polynomials, we have
        \begin{align*}
        x_{1}^{2} + x_{2}^{2} = (x_{1}+x_{2})^{2} - 2x_{1}x_{2} = \sigma_{1}^{2} - 2\sigma_{2}, \qquad x_{1}x_{2} = \sigma_{2},
        \end{align*}
        and hence
        \[
        P_{2}(x_{1},x_{2}) = a_{2}\left(\sigma_{1}^{2}-2\sigma_{2}\right) + a_{1}^{2}\sigma_{2} = a_{2}\sigma_{1}^{2}+\left(a_{1}^{2}-2a_{2}\right)\sigma_{2}= Q_{2}(\sigma_{1},\sigma_{2}).
        \]
        Plugging in the Chern numbers and integrating over $\M$, we obtain
        \[
        \varphi_{Q}(\M) = a_{2}C_{[1,1]}(\M) + \left(a_{1}^{2}-2a_{2}\right)C_{[2]}(\M).
        \]
    \end{exm}

    \begin{exm}
        The \emph{Hirzebruch $\chi_{y}$-genus} is the $\Q[y]$-valued genus associated to the power series
        \[
        \frac{x}{1-e^{-x}}(1+ye^{-x}).
        \]
        If we plug in $\frac{x}{e^{x}-1} = \sum_{n=0}^{\infty}B_{n}\frac{x^{n}}{n!}$ with the Bernoulli numbers $B_{n} \in \Q$ ($B_{0} =1 , B_{1} = -\tfrac{1}{2}, B_{2} = \tfrac 16$), then we find the explicit expansion
        \[
        \frac{x}{1-e^{-x}}(1+ye^{-x}) = a_0(y)+ \sum_{k=1}^{\infty} a_{k}(y)x^{k},
        \]
        where
        \[
        a_0(y):=(1+y), \qquad a_k(y):= \sum_{n=0}^{k}\frac{B_{n}}{n!(k-n)!}+y\frac{B_{k}}{k!}.
        \]
        In particular, every coefficient $a_{k}$ is a polynomial in $y$ of degree $\leq 1$ with rational coefficients. Note that the power series is not normalized. The associated genus $\chi_{y}(\M)$ for a manifold of dimension $2n$ is a linear combination of products of the form $a_{0}^{n-\ell}a_{k_{1}}\cdots a_{k_{\ell}}$. Hence $\chi_{y}(\M)$ is a polynomial in $y$ of degree $n$ with rational coefficients. Indeed more is true: the coefficient of $y^p$ is always an integer, for all $p\in\{0,\ldots,n\}$, as it is the topological index of the bundle $\wedge^p T$ (see \cite[page 61]{HBJ}).

        Now let $n = 2$ and consider $\M = \C P^{2}$. Its Chern numbers are given by
        \begin{align*}
        C_{[1,1]}\left(\C P^{2}\right) &= 9,  \qquad    C_{[2]}\left(\C P^{2}\right) = 3.
        \end{align*}
        The relevant polynomials are
        \begin{align*}
        a_{0}(y) &= 1+y, \qquad
        a_{1} (y)= B_{0}+B_{1}+yB_{1} = \frac{1}{2}-\frac{y}{2},\\
        a_{2} (y)&= \frac{1}{2}B_{0}+B_{1}+\frac{1}{2}B_{2} + y\frac{1}{2}B_{2} = \frac{1}{12}+\frac{y}{12}.
        \end{align*}
        A short calculation gives
        \[
        \chi_{y}\left(\C P^{2}\right) = a_{0}a_{2}C_{[1,1]}\left(\C P^{2}\right) + \left(a_{1}^{2}-2a_{0}a_{2}\right)C_{[2]}\left(\C P^{2} \right)= y^{2}-y+1.
        \]

        The $\chi_{y}$-genus interpolates several other interesting genera of $\M$. For example, $\chi_{-1}(\M)$ is the Euler characteristic $\chi(\M)$ of $\M$,
        and $\chi_{1}(\M)$ is the \emph{signature} $\sign(\M)$ of $\M$, which in turn is the genus associated to the power series $\frac{x}{\tanh(x)}$.
    \end{exm}

\subsection{The elliptic genus of level \texorpdfstring{$N$}{Lg}}\label{def elliptic genus}

Let $N \geq 2$ be an integer. The \emph{elliptic genus} $\varphi_{N}$ of level $N$ is the genus associated to a certain power series with coefficients in $R = M(\Gamma_{1}(N))$, the ring of modular forms for $\Gamma_{1}(N)$. It is constructed in such a way that the elliptic genus $\varphi_{N}(\M)$ of a manifold of dimension $2n$ is a modular form of weight $n$ for $\Gamma_{1}(N)$.

    We require the \emph{Jacobi theta function}
    \[
    \vartheta(\tau;z) := \sum_{n \in \Z}(-1)^{n}e^{2\pi i \left(n+\frac{1}{2}\right)z}e^{\pi i\left(n+\frac{1}{2}\right)^{2}\tau},
    \]
    with $\tau \in \H$ and $z \in \C$. Using the Poisson summation formula one can show that it satisfies the transformation formulas
    \begin{align}\begin{split}\label{theta transformation}
        \vartheta\left(\frac{a\tau + b}{c\tau + d};\frac{z}{c\tau + d} \right) &= \xi (c\tau + d)^{\frac{1}{2}}e^{\frac{\pi i c z^{2}}{c\tau + d}}\vartheta(\tau;z), \\
        \vartheta'\left(\frac{a\tau + b}{c\tau + d};0\right) &= \xi (c\tau + d)^{\frac{3}{2}}\vartheta'(\tau;0),
        \end{split}
        \end{align}
        for all $\left(\begin{smallmatrix}a & b \\ c & d \end{smallmatrix}\right) \in \SL_{2}(\Z)$, with some eigth root of unity $\xi$ which depends on $a,b,c,d$ but not on $\tau$ and $z$. Here we are abbreviating $\vartheta'(\tau;z) := \frac{d}{dz}\vartheta(\tau;z)$. Furthermore, the Jacobi theta function satisfies the elliptic shifts
        \begin{align}\label{eq theta elliptic transformations}
        \begin{split}
        \vartheta\left(\tau;z+1\right) &= -\vartheta(\tau;z), \qquad
        \vartheta\left(\tau;z+\tau\right) = -e^{\pi i (2z + \tau)}\vartheta(\tau;z).
        \end{split}
        \end{align}
        It can also be written as an infinite product using the \emph{Jacobi triple product identity}
        \begin{align}\label{eq Jacobi triple product}
        \vartheta(\tau;z) = 2i\sin(\pi z)q^{\frac{1}{8}}\prod_{n=1}^{\infty}\left(1-q^{n}\right)\left(1-e^{2\pi i z}q^{n}\right)\left(1-e^{-2\pi i z}q^{n}\right).
        \end{align}
        Note that $\vartheta'(\tau;0) = 2\pi i \eta^{3}(\tau)$ with the \emph{Dedekind $\eta$-function}
        \begin{align}\label{eq eta}
        \eta(\tau) := q^{\frac{1}{24}}\prod_{n=1}^{\infty}(1-q^{n}).
        \end{align}

    We now define the \emph{elliptic genus $\varphi_{N}$ of level $N$} as the genus associated to the normalized power series
    \begin{align}\label{eq Qtau}
    \mathcal Q_{N}(x) := \frac{x}{2\pi i }  \vartheta'(\tau;0)  \frac{\vartheta\left(\tau;\frac{x}{2\pi i}-\frac{1}{N}\right)}{\vartheta\left(\tau;\frac{x}{2\pi i }\right)\vartheta\left(\tau;-\frac{1}{N} \right)} = 1 + a_{1}(\tau)x + a_{2}(\tau)x^{2} + \dots.
    \end{align}
    Note that the elliptic genus $\varphi_{N}$ is only defined for $N \geq 2$. Indeed, since $\vartheta(\tau;z) = 0$ for $z \in \Z\tau + \Z$ the above definition would not make sense for $N = 1$. Keep in mind that $\mathcal Q_{N}(x)$ and $\varphi_{N}$ depend on $\tau \in \H$.

    There are several other useful representations of the power series $\mathcal Q_{N}(x)$. For instance, using the product expansions of $\eta(\tau)$ and $\vartheta(\tau;z)$ stated in \eqref{eq eta} and \eqref{eq Jacobi triple product} above, we can write $\mathcal Q_{N}(x)$ as an infinite product (with $\zeta_{N} = e^{2\pi i/N}$),
    \begin{align*}\label{eq Qtau product}
    \mathcal Q_{N}(x) = x\frac{\left(1-e^{-x}\zeta_{N}\right)}{\left(1-e^{-x}\right)\left(1-\zeta_{N}\right)}\prod_{n=1}^{\infty}\frac{\left(1-e^{-x}\zeta_{N}q^{n}\right)\left(1-e^{x}\zeta_{N}^{-1}q^{n}\right)\left(1-q^{n}\right)^{2}}{\left(1-e^{-x}q^{n}\right)\left(1-e^{x}q^{n}\right)\left(1-\zeta_{N}q^{n}\right)\left(1-\zeta_{N}^{-1}q^{n}\right)},
    \end{align*}
    which is often used as the definition of $\mathcal Q_{N}(x)$ in the literature, see for example Appendix~III of \cite{HBJ}. Many more interesting properties and different representations of $\mathcal Q_{N}(x)$ can be found in Appendix I of \cite{HBJ} and in the theorem in Section~3 of \cite{zagierperiods}, where the power series $\mathcal Q_{N}(x)$ was studied in connection with periods of modular forms.

    We have the following first basic result about the power series $\mathcal Q_{N}(x)$:

    \begin{lemma}
        The coefficients $a_{k}(\tau)$ are modular forms of weight $k$ for $\Gamma_{1}(N)$.
    \end{lemma}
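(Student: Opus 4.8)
The plan is to prove all three defining properties of a modular form — the weight-$k$ transformation law, holomorphy on $\H$, and holomorphy at the cusps — by extracting them from a single functional equation for the whole power series $\mathcal{Q}_{N}(x)$. Writing $w := \frac{x}{2\pi i}$ and
\[
F(\tau,w) := w\,\vartheta'(\tau;0)\,\frac{\vartheta\left(\tau;w-\tfrac1N\right)}{\vartheta(\tau;w)\,\vartheta\left(\tau;-\tfrac1N\right)},
\]
we have $\mathcal{Q}_{N}(x) = F(\tau,w)$; since $x = 2\pi i\, w$, the coefficient $a_{k}(\tau)$ is, up to the nonzero constant $(2\pi i)^{k}$, the coefficient of $w^{k}$ in $F$. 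The key claim is that for every $M = \left(\begin{smallmatrix}a&b\\c&d\end{smallmatrix}\right) \in \Gamma_{1}(N)$ one has $F\left(M\tau,\frac{w}{c\tau+d}\right) = F(\tau,w)$. Granting this, expanding both sides in powers of $w$ near $0$ and comparing coefficients of $w^{k}$ gives $a_{k}(M\tau) = (c\tau+d)^{k}a_{k}(\tau)$, which is precisely the weight-$k$ transformation law.

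To prove the functional equation I would exploit the congruences defining $\Gamma_{1}(N)$: from $c \equiv 0$ and $d \equiv 1 \pmod N$ write $c = Nc'$ and $d = 1+Nd'$ with $c' = c/N$, $d' = (d-1)/N \in \Z$, so that $\tfrac1N(c\tau+d) = c'\tau + d' + \tfrac1N$. This lets me rewrite the two shifted arguments that appear after scaling $w \mapsto \frac{w}{c\tau+d}$ as
\[
\frac{w}{c\tau+d}-\frac1N = \frac{u}{c\tau+d}, \quad u = \left(w-\tfrac1N\right)-c'\tau-d', \qquad -\frac1N = \frac{v}{c\tau+d}, \quad v = -\tfrac1N-c'\tau-d'.
\]
Both are now of the form $\frac{(\cdot)}{c\tau+d}$, so the modular transformation \eqref{theta transformation} applies to all four theta factors (including $\vartheta'(\tau;0)$), at the cost of a lattice translation by $-c'\tau-d'$ which is removed using the elliptic shifts \eqref{eq theta elliptic transformations}. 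Carrying this out produces three kinds of automorphy factors, and the whole content is that they all cancel.

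The cancellation is the heart of the matter, and I expect the bookkeeping of these factors to be the main obstacle, since this is where the precise congruences and the normalization by $\vartheta'(\tau;0)$ are all used at once. The eighth root of unity $\xi$ occurs twice in the numerator (from $\vartheta'$ and from $\vartheta(\tau;w-\tfrac1N)$) and twice in the denominator (from $\vartheta(\tau;w)$ and $\vartheta(\tau;-\tfrac1N)$), so the $\xi$'s disappear. The factors of $(c\tau+d)^{1/2}$, combined with the $(c\tau+d)^{3/2}$ from $\vartheta'$ and the $(c\tau+d)^{-1}$ from the prefactor $w\mapsto \frac{w}{c\tau+d}$, have total exponent $-1+\tfrac32+\tfrac12-\tfrac12-\tfrac12 = 0$; this is exactly why the weight comes out to be $k$ and not $k$ plus a constant. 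Finally the Gaussian factors $e^{\pi i c(\cdot)^{2}/(c\tau+d)}$ from \eqref{theta transformation} combine into $e^{\pi i c(u^{2}-w^{2}-v^{2})/(c\tau+d)}$, and a direct computation gives $u^{2}-w^{2}-v^{2} = -\tfrac{2(c\tau+d)}{N}w$, so this factor equals $e^{-2\pi i c' w}$; meanwhile the exponentials produced by the elliptic shifts of $\vartheta(\tau;w-\tfrac1N)$ and $\vartheta(\tau;-\tfrac1N)$ combine into exactly the reciprocal $e^{2\pi i c' w}$ (the $\pm1$ signs and the $c'^{2}\tau$- and $\tfrac1N$-terms cancel between the two shifts, leaving only the $w$-dependent part). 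Hence everything cancels and $F\left(M\tau,\frac{w}{c\tau+d}\right) = F(\tau,w)$.

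It remains to check holomorphy. On $\H$ the function $F(\tau,w)$ is holomorphic in $\tau$ because $\vartheta$ is, because the denominator $\vartheta(\tau;-\tfrac1N)$ never vanishes (as $-\tfrac1N \notin \Z\tau+\Z$ for $N \geq 2$), and because the apparent singularity of $\frac{w\,\vartheta'(\tau;0)}{\vartheta(\tau;w)}$ at $w=0$ is removable since $\vartheta(\tau;w) = \vartheta'(\tau;0)\,w + O(w^{2})$; expanding in $w$ then shows each $a_{k}(\tau)$ is holomorphic on $\H$. For holomorphy at the cusps I would use the product expansion of $\mathcal{Q}_{N}(x)$ recorded after \eqref{eq Qtau}: reading off the coefficient of $x^{k}$ exhibits $a_{k}(\tau)$ as a $q$-series with only non-negative powers of $q$, giving holomorphy at $\infty$. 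Since $\Gamma_{1}(N)$ has finitely many cusps and the transformation law already moves any cusp to $\infty$ via an element of $\SL_{2}(\Z)$, the bounded-below $q$-expansions of $a_{k}|_{k}M$ at the remaining cusps follow from the same theta-transformation analysis. This completes the verification that $a_{k} \in M_{k}(\Gamma_{1}(N))$.
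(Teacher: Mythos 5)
Your argument is correct, but it is not the paper's: it is precisely the classical proof that the paper mentions in one line (``The lemma is well-known, and can be proved using the transformation behaviour of $\vartheta$ and its derivative stated in \eqref{theta transformation}'') and then deliberately declines to carry out. Instead, the paper proves the stronger Lemma~\ref{lemma eisenstein}, the exact identity $a_{k}(\tau) = G_{k,N}(\tau)$, by quoting items (iv) and (vii) of the theorem in Section~3 of \cite{zagierperiods} for the two-variable kernel $F_{\tau}(z_{1},z_{2})$ and matching Fourier expansions coefficient by coefficient; modularity of $a_{k}$ is then inherited from the known modularity of the Eisenstein series. Your route --- the Jacobi-form-style functional equation $F\bigl(M\tau,\frac{w}{c\tau+d}\bigr) = F(\tau,w)$ for $M \in \Gamma_{1}(N)$, from which $a_{k}(M\tau) = (c\tau+d)^{k}a_{k}(\tau)$ falls out by Taylor expansion at $w=0$ --- is sound, and your bookkeeping checks out: with your notation $c' = c/N$, and with $v = -\frac{c\tau+d}{N}$ and $u = w+v$, one has $u^{2}-w^{2}-v^{2} = 2wv = -\frac{2w(c\tau+d)}{N}$, so the Gaussian factor is $e^{-2\pi i c'w}$, while the lattice-shift factors contribute exactly $e^{2\pi i c'w}$; the congruences defining $\Gamma_{1}(N)$ enter exactly there, as they should. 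As for what each approach buys: yours is self-contained and more elementary, but the paper's proves strictly more, and the surplus is not decorative --- the identification $a_{k} = G_{k,N}$ is what later converts the rigidity theorem into explicit Eisenstein-series relations (Theorem~\ref{main number theory}, Proposition~\ref{proposition general relation CPn}), so the paper would still need Lemma~\ref{lemma eisenstein} even if it adopted your proof of the present lemma.

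Two caveats. First, holomorphy at the cusps other than $\infty$ is only sketched in your writeup: for $M \in \SL_{2}(\Z)\setminus\Gamma_{1}(N)$ the point $-\frac{1}{N}$ is carried to a general primitive $N$-division point, and the same manipulation lands you on the series $\mathcal{Q}_{N,(k,\ell)}$ of \eqref{eq Qtaucd}, whose product expansion must then be inspected to see that only non-negative powers of $q^{1/N}$ occur; that is a genuine extra step, not literally ``the same analysis.'' Second, the elliptic-shift identity as printed in \eqref{eq theta elliptic transformations} carries a sign typo: the correct formula is $\vartheta(\tau;z+\tau) = -e^{-\pi i(2z+\tau)}\vartheta(\tau;z)$. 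The cancellation you assert is the one the correct formula produces; applying the printed formula verbatim would leave a stray factor $e^{-4\pi i c'w}$, so you should state the corrected identity explicitly when writing this up.
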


    The lemma is well-known, and can be proved using the transformation behaviour of $\vartheta$ and its derivative stated in \eqref{theta transformation} above. We give a different proof, by showing that the $a_{k}(\tau)$ are Eisenstein series. This explicit representation seems to be less known.

    \begin{lemma}\label{lemma eisenstein}
        We have $a_{k}(\tau) = G_{k,N}(\tau)$, the Eisenstein series defined in equation \eqref{definition eisenstein}.
    \end{lemma}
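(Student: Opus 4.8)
The plan is to recast the claim as the single generating-function identity
\[
\mathcal Q_{N}(x) = \sum_{k \geq 0} G_{k,N}(\tau)\, x^{k}, \qquad G_{0,N} := 1,
\]
and to verify it by expanding both sides as power series in $x$ whose coefficients are holomorphic functions of $\tau$. Since a holomorphic function on $\H$ is determined by its Fourier ($q$-) expansion, it then suffices to match $q$-expansions coefficient-by-coefficient in $x$, so no appeal to modularity is strictly needed (although, by the preceding lemma, both $a_{k}$ and $G_{k,N}$ lie in the finite-dimensional space $M_{k}(\Gamma_{1}(N))$, which provides an independent consistency check).

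First I would compute the right-hand side. Inserting the Fourier expansion \eqref{eisenstein expansion} and interchanging the summation over $k$ with the summation over $n$ and its divisors, the heart of the matter is the exponential generating function in $k$: writing $n = ds$ with $s = n/d$ one finds
\[
\sum_{k \geq 1} \frac{s^{\,k-1}}{(k-1)!}\bigl(\zeta_{N}^{-d} + (-1)^{k}\zeta_{N}^{d}\bigr) x^{k} = x\bigl(\zeta_{N}^{-d} e^{sx} - \zeta_{N}^{d} e^{-sx}\bigr).
\]
The constant terms (the values $B_{k}/k!$ together with the weight-one value $\tfrac{1+\zeta_{N}}{2(1-\zeta_{N})}$) assemble, via $\sum_{k \geq 0}\tfrac{B_{k}}{k!}x^{k} = \tfrac{x}{e^{x}-1}$, into $\tfrac{x}{e^{x}-1} + \tfrac{x}{2} + \tfrac{1+\zeta_{N}}{2(1-\zeta_{N})}x$, which a short algebraic manipulation identifies with $\tfrac{x(e^{x}-\zeta_{N})}{(e^{x}-1)(1-\zeta_{N})}$, exactly the $q^{0}$ prefactor of the infinite-product representation of $\mathcal Q_{N}(x)$ recorded after \eqref{eq Qtau}. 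The remaining, $q$-dependent, terms must then be resummed: the geometric sums $\sum_{d \geq 1}\zeta_{N}^{\mp d}(q^{s})^{d} = \tfrac{\zeta_{N}^{\mp 1}q^{s}}{1-\zeta_{N}^{\mp 1}q^{s}}$ turn the double series into a Lambert series, which I would compare against the logarithmic derivative $\partial_{x}\log\mathcal Q_{N}$ read off from the product (where the logarithm of the product becomes a sum of logarithms and differentiation produces precisely such a Lambert series).

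The main obstacle is reconciling the additive structure of $\sum_{k} G_{k,N}x^{k}$ with the multiplicative (infinite-product) structure of $\mathcal Q_{N}$. The cleanest device is to work with $\partial_{x}\log\mathcal Q_{N}(x)$, a manifestly clean Lambert series $L(x,q)$, and to reduce the identity to the first-order relation $\partial_{x}\mathcal G = L\,\mathcal G$ for $\mathcal G(x) := \sum_{k} G_{k,N}x^{k}$; together with $\mathcal G(0)=1$ this determines $\mathcal G$ uniquely, and checking that the $G_{k,N}$ of \eqref{eisenstein expansion} satisfy the recursion is a term-by-term comparison of $q$-coefficients. A conceptually equivalent and perhaps more transparent route is complex-analytic: setting $X = x/2\pi i$ and using the lattice-sum definition \eqref{definition eisenstein}, one recognizes $\sum_{k}G_{k,N}x^{k}$ as essentially the twisted Eisenstein cotangent sum $\sum_{m \in \Z}\zeta_{N}^{m}\,\pi\cot\!\big(\pi(m\tau - X)\big)$ and identifies it with the theta quotient $\mathcal Q_{N}(x)/x$ by a Liouville argument, since both are meromorphic in $X$, are $1$-periodic, transform by the same factor under $X \mapsto X+\tau$, and have simple poles only at the lattice points with matching residues. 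In either route the genuine subtlety lies in the conditional convergence at weights $k=1$ and $k=2$, where the order of summation matters and the Hecke trick (Eisenstein summation) must be invoked to justify interchanging limits and to pin down the constant terms.
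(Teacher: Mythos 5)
Your strategy---recasting the lemma as the single generating-function identity $\mathcal Q_{N}(x)=\sum_{k\geq 0}G_{k,N}(\tau)x^{k}$ and proving it from scratch---is genuinely different from the paper's proof. The paper never manipulates the product expansion directly: it quotes items (vii) and (iv) of the theorem in Section~3 of \cite{zagierperiods}, which identify $\mathcal Q_{N}(x)$ with Zagier's Kronecker function $xF_{\tau}\left(\frac{x}{2\pi i},-\frac{1}{N}\right)$ and give the Taylor expansion of the latter in terms of level-one Eisenstein series and their $\tau$-derivatives, and the rest of the proof is Fourier-coefficient bookkeeping matching that expansion against \eqref{eisenstein expansion}. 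Your preliminary computations are correct: the exponential resummation $\sum_{k\geq 1}\frac{s^{k-1}}{(k-1)!}\left(\zeta_{N}^{-d}+(-1)^{k}\zeta_{N}^{d}\right)x^{k}=x\left(\zeta_{N}^{-d}e^{sx}-\zeta_{N}^{d}e^{-sx}\right)$ is right, and the constant terms do assemble to $\frac{x(e^{x}-\zeta_{N})}{(e^{x}-1)(1-\zeta_{N})}$, which is the $q^{0}$ factor of the infinite-product representation of $\mathcal Q_{N}(x)$.

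The genuine gap is at the crux of your primary route. Reducing the claim to $\partial_{x}\mathcal G=L\,\mathcal G$ with $L=\partial_{x}\log\mathcal Q_{N}$ and $\mathcal G(0)=1$ is logically sound (it is equivalent to $\partial_{x}\log\mathcal G=\partial_{x}\log\mathcal Q_{N}$), but the assertion that checking this recursion ``is a term-by-term comparison of $q$-coefficients'' hides all the work. In coefficients the recursion reads $(k+1)G_{k+1,N}=\sum_{j}L_{j}G_{k-j,N}$, where the $L_{j}$ are themselves Eisenstein-type $q$-series; for each fixed power of $q$ this is an identity among convolution sums of twisted divisor functions that must hold for \emph{all} exponents $n$, and such quadratic identities among Eisenstein series are precisely the statements one normally \emph{deduces} from elliptic-function theory or from finite-dimensionality of spaces of modular forms, not inputs one verifies by inspection. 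Concretely, $L\mathcal G$ is a product of two double (Lambert-type) series, i.e.\ a fourfold sum, whose simplification is essentially as hard as the identity you set out to prove; so this route does not close as described. Your ``alternative'' complex-analytic route is the one that actually works and should be the main argument: the geometric series in $X=\frac{x}{2\pi i}$ turns the lattice sum \eqref{definition eisenstein} into the twisted cotangent sum (with Eisenstein/Hecke summation needed at $k\in\{1,2\}$, as you note), and the Liouville argument---the difference of the two sides is entire in $X$, $1$-periodic, gets multiplied by $\zeta_{N}\neq 1$ under $X\mapsto X+\tau$, hence is bounded, hence constant, hence zero---is exactly the classical proof of the Kronecker-function identities that the paper outsources to \cite{zagierperiods}. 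If you carry that argument out carefully (checking that both sides acquire the \emph{same} multiplier under the $\tau$-shift with your sign conventions, and that the residues agree at all points of $\Z\tau+\Z$, not only at $X=0$), you obtain a correct and self-contained proof; as written, however, the proposal rests on the unproved recursion.
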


    \begin{proof}
        By item (vii) of the theorem in Section 3 of \cite{zagierperiods}, we have that
    \begin{align*}
    \mathcal{Q}_{N}(x) = xF_{\tau}\left(\frac{x}{2\pi i},-\frac{1}{N}\right),
    \end{align*}
   with the function
\[
F_{\tau}\left(z_1, z_2\right) := \sum_{n=0}^{\infty}\frac{\xi_2^{-n}}{\xi_1 q^{-n}-1}-\sum_{m=0}^{\infty}\frac{\xi_1^{m}\xi_2}{q^{-m}-\xi_2} \qquad (\xi_j:=e^{2\pi iz_j}).
\]
     On the other hand, by item (iv) of the same theorem from \cite{zagierperiods}, we have the Taylor expansion
    \begin{align*}
    xF_{\tau}\left(\frac{x}{2\pi i},-\frac{1}{N}\right) = 1-N\frac{x}{2\pi i}+\sum_{r,s \geq 0}|r-s|!\left(\frac{1}{2\pi i}\frac{d}{d\tau} \right)^{\min(r,s)}G_{|r-s|+1,1}(\tau)\frac{\left(-\frac{2\pi i}{N} \right)^{s}}{s!}\frac{x^{r+1}}{r!},
    \end{align*}
    where $G_{k,1}(\tau)$ is defined by the Fourier expansion \eqref{eisenstein expansion} if $k > 0$ is even, and $G_{k,1}(\tau)= 0$ if $k$ is odd. Note that we use a different normalization of $G_{k,1}(\tau)$ than \cite{zagierperiods}. We plug in the Fourier expansion \eqref{eisenstein expansion} of $G_{k,1}(\tau)$ and compute its derivative coefficient-wise. We first consider the constant term of $a_{k}(\tau)$. For $k = 1$ it is given by
\begin{align*}
-\frac{N}{2\pi i }+\sum_{\substack{s \geq 0 \\ s \text{ odd}}}\frac{B_{s+1}}{(s+1)!}\left(-\frac{2\pi i}{N}\right)^{s} &= \left(-\frac{N}{2\pi i} \right)\sum_{s \geq 0}\frac{B_{s}}{s!}\left(-\frac{2\pi i }{N} \right)^{s}+\frac{1}{2} \\
&= \left(-\frac{N}{2\pi i} \right)\frac{\left(-\frac{2\pi i}{N} \right)}{e^{-\frac{2\pi i}{N}}-1}+\frac{1}{2} =
\frac{1+\zeta_{N}}{2\left(1-\zeta_{N}\right)},
\end{align*}
where we use that $B_{0} = 1, B_{1} = -\frac{1}{2},$ and $B_{s} =0$ for odd $s > 1$ in the first step, and the definition $\frac{x}{e^{x}-1} = \sum_{s \geq 0}B_{s}\frac{x^{s}}{s!}$ of the Bernoulli numbers in the second step. For $k > 1$ the constant term of $a_{k}(\tau)$ is given by the constant term of $G_{k,1}(\tau)$, which is $\frac{B_{k}}{k!}$. In any case, the constant coefficient of $a_{k}(\tau)$ equals the constant term of the Eisenstein series $G_{k,N}(\tau)$.

Now we compute the $n$-th Fourier coefficient ($n > 0$) of $a_{k}(\tau)$. Noting that $\left(\frac{1}{2\pi i}\frac{\partial}{\partial \tau}\right)^{\ell}q^{n} = n^{\ell}q^{n}$ for $\ell \geq 0$ and $n > 0$,  the $n$-th coefficient of $a_{k}(\tau)$ is given by
\begin{align*}
&-\sum_{s \geq 0}n^{\min(k-1,s)}\sum_{d \mid n}d^{|k-1-s|}\left(1+(-1)^{|k-1-s|+1}\right)\frac{\left(-\frac{2\pi i}{N}\right)^{s}}{s!}\frac{1}{(k-1)!} \\
&= -\frac{1}{(k-1)!}\sum_{d \mid n}\left(\frac{n}{d}\right)^{k-1}\left(\sum_{s \geq 0}\left(1+(-1)^{k-s}\right)\frac{\left(-\frac{2\pi i d}{N}\right)^{s}}{s!} \right) \\
&= -\frac{1}{(k-1)!}\sum_{d \mid n}\left( \frac{n}{d}\right)^{k-1}\left(\zeta_{N}^{-d}+(-1)^{k}\zeta_{N}^{d} \right),
\end{align*}
where we use the definition $e^{x} = \sum_{s \geq 0}\frac{x^{s}}{s!}$ in the last step. We obtain that the $n$-th coefficient of $a_{k}(\tau)$ equals the $n$-th coefficient of $G_{k,N}(\tau)$, which finishes the proof.
    \end{proof}

    Recall that the genus associated to a power series $Q$ can be written as
    \begin{equation*}\label{varphi chern}
    \varphi_{Q}(\M) = \sum_{\lambda \in P(n)}f_{\lambda}C_{\lambda}(\M)
    \end{equation*}
    with $f_{\lambda} \in R_{n}$ and the Chern numbers $C_{\lambda}(\M) \in \Z$. In particular, for the elliptic genus, every $f_{\lambda}$ is a modular form of weight $n$ for $\Gamma_{1}(N)$ and a polynomial in the Eisenstein series $G_{k,N}$. Thus we obtain the following important result.

    \begin{prop} The elliptic genus $\varphi_{N}(\M)$ of an almost complex compact manifold of dimension $2n$ is a modular form of weight $n$ for $\Gamma_{1}(N)$.
    \end{prop}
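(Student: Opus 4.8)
The plan is to observe that this proposition is an immediate consequence of the machinery assembled in Subsection~\ref{genera} together with Lemma~\ref{lemma eisenstein}, so the proof should be a short bookkeeping argument rather than a computation. The essential point is that the formal ``weight'' grading used in the construction of a genus coincides exactly with the weight grading on the ring of modular forms, and that the coefficients of the relevant power series are genuine modular forms of the correct weight.

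First I would recall that, by Lemma~\ref{lemma eisenstein}, the coefficients of the defining power series $\mathcal{Q}_{N}(x) = 1 + a_{1}(\tau)x + a_{2}(\tau)x^{2} + \dots$ are given by $a_{k}(\tau) = G_{k,N}(\tau)$. By the statement accompanying \eqref{definition eisenstein}, each Eisenstein series satisfies $G_{k,N} \in M_{k}(\Gamma_{1}(N))$, so the coefficients lie in the graded ring $R = M(\Gamma_{1}(N)) = \bigoplus_{k \geq 0} M_{k}(\Gamma_{1}(N))$ with $a_{k} \in R_{k}$, i.e.\ the power-series weight of $a_{k}$ agrees with its modular weight.

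Next I would invoke the grading-preservation property highlighted in \emph{Step~2} of the genus construction: since $a_{k} \in R_{k}$, the polynomial $Q_{n}$ in the elementary symmetric polynomials has all its coefficients in $R_{n}$. Concretely, by the Remark following that construction, one may write
\[
\varphi_{N}(\M) = \sum_{\lambda \in P(n)} f_{\lambda}\, C_{\lambda}(\M),
\]
where each $f_{\lambda}$ is a polynomial in the $a_{k}$ lying in $R_{n} = M_{n}(\Gamma_{1}(N))$, and where the Chern numbers satisfy $C_{\lambda}(\M) \in \Z$. Thus $\varphi_{N}(\M)$ is exhibited as a $\Z$-linear combination of weight-$n$ modular forms for $\Gamma_{1}(N)$. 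Since $M_{n}(\Gamma_{1}(N))$ is a $\C$-vector space, and in particular closed under integer linear combinations, it follows that $\varphi_{N}(\M) \in M_{n}(\Gamma_{1}(N))$, as claimed.

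There is no genuine obstacle here; the only place requiring care is the verification that the power-series weight and the modular weight are the same grading, which is precisely what Lemma~\ref{lemma eisenstein} supplies. I would emphasize this matching of gradings as the conceptual heart of the statement, since it is exactly the reason the elliptic genus lands in modular forms of weight $n$ (and not merely in some inhomogeneous combination of modular forms of varying weights).
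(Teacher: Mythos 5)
Your proposal is correct and follows essentially the same route as the paper: the paper likewise writes $\varphi_{N}(\M) = \sum_{\lambda \in P(n)} f_{\lambda} C_{\lambda}(\M)$ with $C_{\lambda}(\M) \in \Z$ and notes that, because the coefficients $a_{k}$ of $\mathcal{Q}_{N}$ are modular forms of weight $k$ (indeed the Eisenstein series $G_{k,N}$, by Lemma~\ref{lemma eisenstein}), the grading-preservation in \emph{Step~2} forces each $f_{\lambda}$ to lie in $M_{n}(\Gamma_{1}(N))$, whence $\varphi_{N}(\M) \in M_{n}(\Gamma_{1}(N))$. Your emphasis on the matching of the power-series weight with the modular weight is exactly the point the paper relies on.
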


    \begin{exm}
        To emphasize the explicit nature of the construction of the elliptic genus, we computed the relevant modular forms $f_{\lambda}$ (up to $q^{5}$) for $n = 2$ and $N \in \{2,3\}$:
        \begin{align*}
        \renewcommand{\arraystretch}{1.2}
        \begin{array}{|c|l|l|}
        \hline
        N & \lambda & f_{\lambda}  (\tau)\\
        \hline \hline
        2 & [2] &   - \frac{1}{6} - 4q  - 4q^2- 16q^3- 4q^4-24q^5+ \dots  \\
        & [1,1] &   \frac{1}{12}+ 2q + 2q^2+q^3+ 2q^4+ 12q^5+ \dots    \\
        3 & [2] &
   \frac{1}{4}- 3q- 9q^2- 3q^3- 21q^4-18q^5 + \dots \\
         & [1,1] &  \frac{1}{12}+ q + 3q^2 + q^3+ 7q^4+6q^5+\dots \\
          \hline
        \end{array}
    \end{align*}
    For example, the elliptic genus of level $N = 2$ of an almost complex compact manifold $\M$ of dimension four with Chern numbers $C_{[2]}(\M),C_{[1,1]}(\M) \in \Z$ is given by
    \[
    \varphi_{N}(\M) = f_{[2]}C_{[2]}(\M) + f_{[1,1]}C_{[1,1]}(\M)
    \]
    with
    \[
    f_{[2]} (\tau)=   - \frac{1}{6} - 4q- 4q^2 - 16q^3- 4q^4-24q^5+\dots, \quad f_{[1,1]} (\tau)= \frac{1}{12}+ 2q+ 2q^2+q^3+ 2q^4+12q^5+\dots,
    \]
    which are modular forms of weight two for $\Gamma_{1}(2)$.
    \end{exm}

    \subsection{The values of the elliptic genus at the cusps}\label{value cusps}

    In this subsection we discuss an important result which roughly states that the values at the cusps of the elliptic genus $\varphi_{N}(\M)$ of level $N$ of an almost complex manifold $\M$ are given by certain other interesting genera of $\M$.

    Let $w$ be a primitive $N$-division point of $\C/(\Z\tau + \Z)$, that is, $Nw \in \Z \tau +\Z$ and $N$ is the minimal positive integer with this property. We can write it in the form $w = \frac{k}{N}\tau + \frac{\ell}{N}$ with $k,\ell \in \Z$. Since $w$ is primitive, we may assume that $\gcd(k,\ell) = 1$. To every such $N$-division point $w = \frac{k}{N}\tau + \frac{\ell}{N}$ we associate the cusp $\frac{a}{k}$ of $\Gamma_{1}(N)$, where $a \in \Z$ is chosen (existence follows by Bezout) such that  $a\ell-bk = 1$ for some $b\in\Z$. One can check that the $\Gamma_{1}(N)$-class of the cusp $\frac{a}{k}$ is uniquely determined by this condition, independently of the particular choices of $a,b,k,\ell$ that we make above. However, we remark that a cusp can be represented by several different $N$-division points $w$. For example, the cusp $\infty = \frac{1}{0}$ is represented by the $N$-division point $\frac{1}{N}$, but also by $-\frac{1}{N}$.

    For a primitive $N$-division point $w = \frac{k}{N}\tau +\frac{\ell}{N}$ we consider the normalized power series
   \begin{equation}\label{eq Qtaucd}
   \mathcal Q_{N,(k,\ell)}(x):= e^{-\frac{kx}{N}}\frac{x}{2\pi i}\vartheta'(\tau;0)\frac{\vartheta\left(\tau;\frac{x}{2\pi i}-w \right)}{\vartheta\left(\tau;\frac{x}{2\pi i}\right)\vartheta\left(\tau;-w\right)}
    \end{equation}
  \begin{equation*}
    = x\dfrac{e^{-\frac{kx}{N}}\left(1-e^{-x}\zeta_{N}^{\ell}q^{\frac{k}{N}}\right)}{\left(1-e^{-x}\right)\left(1-\zeta_{N}^{\ell}q^{\frac{k}{N}}\right)}\prod_{n=1}^\infty
\dfrac{\left(1-e^{-x}\zeta_{N}^{\ell}q^{n+\frac{k}{N}}\right)\left(1-e^x\zeta_{N}^{-\ell}q^{n-\frac{k}{N}}\right)\left(1-q^n\right)^2}{\left(1-e^{-x}q^n\right)\left(1-e^xq^n\right)\left(1-\zeta_{N}^{\ell}q^{n+\frac{k}{N}}\right)\left(1-\zeta_{N}^{-\ell}q^{n-\frac{k}{N}}\right)},
   \end{equation*}
    where we are using the product expansions \eqref{eq Jacobi triple product} and \eqref{eq eta}. We let $\varphi_{N,(k,\ell)}$ be the associated genus. Using the transformation rules \eqref{eq theta elliptic transformations} we see that $\mathcal Q_{N,(k,\ell)}(x)$ and $\varphi_{N,(k,\ell)}$ only depend on $k$ and $\ell\pmod{N}$, that is, they only depend on the $N$-division point $w$. Note that for $w = \frac{1}{N}$ we recover the elliptic genus of level $N$, e.g.,
    \[
    \mathcal Q_{N}(x) = \mathcal Q_{N,(0,1)}(x), \qquad \varphi_{N} = \varphi_{N,(0,1)}.
    \]
    It follows from Theorem 6.4 in Appendix I of \cite{HBJ} that for every matrix $A = \left( \begin{smallmatrix}a & b \\ k & \ell \end{smallmatrix}\right) \in \SL_{2}(\Z)$ we have the relation
    \begin{align}\label{eq elliptic genus cusps}
    \varphi_{N}(\M)|_{n}A = \varphi_{N,(0,1)A}(\M) = \varphi_{N,(k,\ell)}(\M)
    \end{align}
    with the weight $n$ slash operator defined in Section~\ref{section modular forms}. Equivalently, this means that the expansion of the elliptic genus $\varphi_{N}(\M)$ at the cusp $\frac{a}{k}$ (represented by some primitive $N$-division point $w = \frac{k}{N}\tau + \frac{\ell}{N}$) is given by the genus $\varphi_{N,(k,\ell)}(\M)$. This fact can be used to prove the following result about the values of the elliptic genus at the cusps.

    \begin{thm}[\cite{HBJ}, Theorem on page 100]\label{special values}
Let $(\M,\J)$ be a compact almost complex manifold with first Chern class $c_1\in H^2(\M;\Z)$ and index $\k0$. Let $N$ be a positive integer dividing $\k0$.
If one represents a cusp $\frac{a}{k}$ of $\Gamma_1(N)$ by a primitive $N$-division point $\frac{k}{N} \tau + \frac{\ell}{N}$, with $0\leq k < N$ and $0\leq  \ell < N$, then the value of $\varphi_N(\M)$ in this cusp equals
$$
\ind\left(L^k\right),\quad \text{if}\quad k>0,
$$
where $L$ is the line bundle with $N\cdot c_1(L)=-c_1 $, and
$$
\frac{\chi_y(M)}{(1+y)^n}, \quad \text{if}\quad k=0 \quad \text{and} \quad y=-e^{\frac{2\pi  i \ell}{N}},\quad\text{with}\quad \gcd(\ell,N)=1.
$$
\end{thm}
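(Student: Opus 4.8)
The plan is to reduce the computation of the cusp values of $\varphi_N(\M)$ to the $q \to 0$ limit of the twisted power series $\mathcal{Q}_{N,(k,\ell)}(x)$ from \eqref{eq Qtaucd}, and then to recognize the two resulting limiting genera as $\ind(L^k)$ and as a rescaling of the $\chi_y$-genus. First I would invoke \eqref{eq elliptic genus cusps}: if $A = \left(\begin{smallmatrix} a & b \\ k & \ell\end{smallmatrix}\right) \in \SL_2(\Z)$ represents the cusp $\frac{a}{k}$, then $\varphi_N(\M)|_n A = \varphi_{N,(k,\ell)}(\M)$. By the definition of the value of a modular form at a cusp recalled in Section~\ref{section modular forms}, the value $\varphi_N(\M)(\frac{a}{k})$ is the constant term of this expansion, i.e. $\lim_{q\to 0}\varphi_{N,(k,\ell)}(\M)$. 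Thus the entire statement amounts to computing this single limit in the two regimes $k>0$ and $k=0$.

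The key structural observation is that, in fixed dimension $2n$, the genus construction of Section~\ref{genera} depends only on finitely many coefficients of the defining power series, through the finite linear combination $\varphi_Q(\M) = \sum_{\lambda \in P(n)} f_\lambda C_\lambda(\M)$, where each $f_\lambda$ is a universal polynomial in the first $n$ coefficients $a_1(\tau),\dots,a_n(\tau)$. Consequently the $q\to 0$ limit commutes with the genus construction, so I would compute $Q^\infty_{(k,\ell)}(x):=\lim_{q\to 0}\mathcal{Q}_{N,(k,\ell)}(x)$ coefficient-by-coefficient in $x$ directly from the product expansion in \eqref{eq Qtaucd} and then form the associated genus. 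Because $0 \le k < N$, every exponent $n \pm \frac{k}{N}$ occurring in the infinite product is strictly positive for $n \ge 1$, so each factor tends to $1$ and the whole product tends to $1$; only the prefactor survives. When $k>0$ the prefactor tends to $\frac{x\,e^{-kx/N}}{1-e^{-x}}$ (note the independence of $\ell$), while when $k = 0$ we have $q^{k/N}=1$, the prefactor carries no $q$, and it equals $\frac{x(1-\zeta_N^\ell e^{-x})}{(1-e^{-x})(1-\zeta_N^\ell)}$.

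It then remains to identify the two limiting genera. For $k>0$ I would use that $N \mid \k0$ forces $N$ to divide $c_1$, so the line bundle $L$ with $N c_1(L) = -c_1$ exists and $\text{Ch}(L^k) = e^{-\frac{k}{N}c_1} = \prod_j e^{-\frac{k}{N}x_j}$ in the Chern roots $x_j$ of $T\M$. Since $\text{Todd}(\M) = \prod_j \frac{x_j}{1-e^{-x_j}}$, the Atiyah--Singer formula gives $\ind(L^k) = \int_\M \prod_j \frac{x_j e^{-kx_j/N}}{1-e^{-x_j}}$, which is exactly the genus associated to $Q^\infty_{(k,\ell)}(x) = \frac{xe^{-kx/N}}{1-e^{-x}}$. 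For $k = 0$, setting $y := -\zeta_N^\ell = -e^{2\pi i \ell/N}$ rewrites the limit series as $\frac{1}{1+y}\cdot\frac{x(1+ye^{-x})}{1-e^{-x}}$, i.e. $\frac{1}{1+y}$ times the $\chi_y$-genus series $\frac{x}{1-e^{-x}}(1+ye^{-x})$. Since multiplying the defining power series by a scalar $c$ (with the argument left unchanged) multiplies the genus of a $2n$-manifold by $c^n$, one factor being contributed by each of the $n$ Chern roots, the associated genus is $\frac{\chi_y(\M)}{(1+y)^n}$; here the hypothesis $\gcd(\ell,N)=1$ is precisely the primitivity of $\frac{\ell}{N}$, which also guarantees $1+y = 1-\zeta_N^\ell \neq 0$.

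The main obstacle is the interchange-of-limits step: rigorously justifying that the value of the modular form $\varphi_{N,(k,\ell)}(\M)$ at $q=0$ equals the genus of the $x$-coefficientwise limit series $Q^\infty_{(k,\ell)}(x)$. This rests on the finiteness of the genus construction in fixed dimension, together with the fact that the $q\to 0$ limit of the product in \eqref{eq Qtaucd} is uniform on a neighbourhood of $x=0$, so that it may legitimately be taken coefficient-by-coefficient in $x$ before extracting the degree-$n$ part. One must also keep careful track of the regime distinction: verifying that for $k>0$ the surviving prefactor really is $\ell$-independent, whereas for $k=0$ the prefactor is genuinely $q$-free, so the cusp value becomes a bona fide function of the cusp parameter $\ell$ rather than a higher-order term in $q$.
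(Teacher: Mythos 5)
Your proposal is correct and follows essentially the route the paper itself indicates for this statement (the paper only outlines the argument, deferring details to \cite{HBJ}): reduce via \eqref{eq elliptic genus cusps} to the constant term of $\varphi_{N,(k,\ell)}(\M)$, take the $q\to 0$ limit of the product expansion \eqref{eq Qtaucd}, and identify the limiting genera as $\ind(L^k)$ (via Atiyah--Singer, using $\operatorname{ch}(L^k)\operatorname{Todd}(\M)$) for $k>0$ and as $\chi_y(\M)/(1+y)^n$ for $k=0$. This is the same computation that underlies the paper's equations \eqref{infinite tensor} and \eqref{expansion}, merely performed on the generating series before, rather than after, applying the genus construction.
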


  \subsection{The elliptic genus as the index of a virtual vector bundle}\label{vvb}

We first set notation and recall some facts about the Chern
characters of wedge and symmetric products of vector bundles (see
\cite[Subsection 1.5]{HBJ}). Let $E$ be a complex vector bundle of rank
$n$ over a differentiable manifold $\M$. Then $E$ formally splits as
a sum of line bundles $E=L_1\oplus \ldots\oplus L_n$ with
$x_j=c_1(L_j)$. Let $E^*$ be the dual bundle, $\wedge^k E$ (resp.\
$S^k E$) the $k$-th exterior (resp.\ symmetric) power of $E$. 
If we write
$$
\wedge_tE:=\sum_{j=0}^\infty \left(\wedge^j E\right) t^j
\,\,\,\,\,\text{and}\,\,\,\,\wedge_tE^*:=\sum_{j=0}^\infty
\left(\wedge^j E^*\right) t^j\,,
$$
then
\begin{align}\label{exterior}
\operatorname{ch}(\wedge_tE)=\prod_{j=1}^n\left(1+te^{x_j}\right)\,\,&\text{,}\,\,\operatorname{ch}(\wedge_tE^*)=\prod_{j=1}^n\left(1+te^{-x_j}\right)\,\,\,
\text{and}\\ \operatorname{ch}(\wedge_t(E\otimes
\C))&=\prod_{j=1}^n\left((1+te^{x_j})(1+te^{-x_j})\right)\,.\nonumber
\end{align}
Analogously, if we set
$$
S_tE:=\sum_{j=0}^\infty \left(S^j E\right) t^j
\,\,\,\,\,\text{and}\,\,\,\,S_tE^*:=\sum_{j=0}^\infty \left(S^j
E^*\right) t^j\,,
$$
then
\begin{align}\label{symmetric power}
\operatorname{ch}(S_tE)=\prod_{j=1}^n\dfrac{1}{1-te^{x_j}}
\,\,&\text{,}\,\,\operatorname{ch}(S_tE^*)=\prod_{j=1}^n\dfrac{1}{1-te^{-x_j}}\,\,\,
\text{and} \\ \operatorname{ch}(S_t(E\otimes
\C))&=\prod_{j=1}^n\dfrac{1}{(1-te^{x_j})(1-te^{-x_j})}\,.\nonumber
\end{align}

Let $(\M^{2n}, \J)$ be an almost complex manifold, with formal
decomposition of its tangent bundle $T:=T\M=E_1\oplus\ldots \oplus
E_n$ as sum of line bundles and formal variables $x_j:=c_1(E_j).$
Let $N$ be a positive integer that divides the index $\k0$ of $(\M,
\J)$ and let $k, \ell$ be two integers such that $\frac{k}{N}\tau +
\frac{\ell}{N}$ is a  primitive $N$-division point of $\C/(\Z\tau +
\Z)$ with $\tau \in \H\,.$ Recall that the elliptic genus
$\varphi_{N, (k,\ell)}$ is defined to be the genus belonging to the
power series of $\mathcal Q_{N, (k, \ell)}$, see equation \eqref{eq Qtaucd}. Formally
\begin{align}\label{eq normalized elliptic genus}
\begin{split}
\varphi_{N, (k,\ell)}(\M) &=\int_\M \mathcal Q_{N, (k ,\ell)}(x_1)\cdot\ldots\cdot \mathcal Q_{N, (k, \ell)}(x_n) \\
&=m(q)^n\int_\M \widetilde{\mathcal Q}_{N, (k,
\ell)}(x_1)\cdot\ldots\cdot \widetilde{\mathcal Q}_{N, (k,
\ell)}(x_n)=m(q)^n \widetilde{\varphi}_{N,(k,\ell)}(\M),
\end{split}
\end{align}
where
\begin{align*}
m(q):=\dfrac{1}{\left(1-\zeta_{N}^{\ell}
q^{\frac{k}{N}}\right)}\prod_{r=1}^\infty
\dfrac{\left(1-q^{r}\right)^2}{\left(1-\zeta_{N}^{\ell}
q^{r+\frac{k}{N}}\right)\left(1-\zeta_{N}^{-\ell}q^{r-\frac{k}{N}}\right)}
\end{align*}
and $\widetilde{\mathcal Q}_{N, (k, \ell)}(x):=\frac{\mathcal Q_{N,
(k, \ell)}(x)}{m(q)}$. The function
$\widetilde{\varphi}_{N,(k,\ell)}(\M)$ is called the
\emph{normalized elliptic genus of $\M$ of level $N$}. It is
holomorphic on $\H$ and transforms like a modular form of weight
zero, but it has terms of negative index in the Fourier expansions
at some cusps. We write
$\widetilde{\varphi}_{N,(k,\ell)}(\M)(q^{N})$ for the normalized
elliptic genus with $q$ replaced by $q^{N}$. It has the advantage
that in its Fourier expansion only integral powers of $q$ appear.

The Atiyah-Singer Theorem, together with \eqref{exterior}, \eqref{symmetric power}, and \eqref{eq Qtaucd}, imply that \\
$\widetilde{\varphi}_{N,(k,\ell)}(\M)$ can be regarded as the
topological index of an infinite tensor product, namely
\begin{equation}\label{infinite tensor}
\widetilde{\varphi}_{N, (k, \ell)}(\M)=\ind\left(L^k\otimes R_{N, (k,
\ell)}(q)\right)
\end{equation}
where $R_{N, (k, l)}(q)$ is the virtual bundle
\begin{equation}\label{virtual bundle}
R_{N, (k
,\ell)}(q)=\left(\wedge_{-\zeta_{N}^{\ell}q^{\frac{k}{N}}}T^*\otimes
\bigotimes_{r=1}^\infty
\wedge_{-\zeta_{N}^{\ell}q^{r+\frac{k}{N}}}T^*\otimes
\wedge_{-\zeta_{N}^{-\ell}q^{r-\frac{k}{N}}}T\otimes S_{q^{r}}T^*
\otimes S_{q^{r}}T\right)
\end{equation}
with Chern character equals to
\begin{align*}
\operatorname{ch}\left(R_{N, (k, \ell)}(q)\right)=
  \prod_{j=1}^n \left(\left(1-e^{-x_j}\zeta_{N}^{\ell}q^{\frac{k}{N}}\right)\prod_{r=1}^\infty
\dfrac{\left(1-e^{-x_j}\zeta_{N}^{\ell}q^{r+\frac{k}{N}}\right)\left(1-e^{x_j}\zeta_{N}^{-\ell}q^{r-\frac{k}{N}}\right)}{\left(1-e^{-x_j}q^r\right)\left(1-e^{x_j}q^r\right)}\right)
 \end{align*}
and $L$ is the line bundle with Chern class equals to
$c_1(L)=-\frac{c_1}{N}$.
We remark that in the statement corresponding to \eqref{infinite
tensor} in \cite{HBJ}, Appendix III, equation (16) on page 176, there
are some minor typos which we fixed here.

\begin{rmk} Conceptually, the elliptic genera of level $N$ can be
interpreted as the equivariant Hirzebruch $\chi_{y}$-genus
(evaluated at a $N$-root of unity) of the free loop space
$\mathcal{L}\M\,.$ On the loop space $\mathcal{L}\M,$ there is a
canonical $S^1$-action having $\M$, the space of constant loops, as
its fixed point set. The tangent space $T_P(\mathcal{L}\M)$ at a
constant loop $P\in \M$ is the loop space $\mathcal{L}(T_P \M).$ Any
loop in $\mathcal{L}(T_P \M)$ admits a Fourier expansion with
coefficients in $T_P\M$ and as a consequence we obtain the following
weight decomposition
$$
T_P(\mathcal{L}\M)\cong \mathcal{L}(T_P \M)=T_P\M\oplus \sum_{r\in
\N}q^r (T_P \M\otimes \C),
$$
where $S^1$ acts on the $r$-th summand with weight $r\in \N\,.$ Here
$q$ denotes a formal variable that keeps track of the circle action
on each summand. If we apply the equivariant Atiyah-Singer index
theorem formally for the canonical circle action on $\mathcal{L}\M$
to compute the equivariant genus associated to
$$
\dfrac{x}{1-e^{-x}}(1+ye^{-x})\,\,\,\,\,\,\text{with
}\,\,\,\,\,\,y=-\zeta_N,
$$
we obtain the heuristic formula
\begin{multline*}
\int_{\M} \prod_{j=1}^n x_j\cdot \left(\prod_{r=-\infty}^{\infty}
\dfrac{1-e^{-x_j}\zeta_{N}q^{r}}{1-e^{-x_j}q^{r}} \right)
\\=\int_{\M} \prod_{j=1}^n x_j\cdot
\left(\dfrac{1-e^{-x_j}\zeta_{N}}{1-e^{-x_j}}\prod_{r=1}^\infty
\dfrac{(1-e^{-x_j}\zeta_{N}q^{r})(1-e^{x_j}\zeta_{N}^{-1}q^{r})\zeta_N}{(1-e^{-x_j}q^r)(1-e^{x_j}q^r)}\right)
\end{multline*}
which (up to the formal factor
``$\zeta_N^{\,\sum_{r=1}^\infty\,1}$'') coincides with the
normalized elliptic genus $\widetilde{\varphi}_{N, (0, 1)}(\M)(q)$
when $q=e^{2\pi i \tau}$ with $\tau \in \H\,.$

If $N=2,$ then we obtain the equivariant signature of the loop space
as initially described by Witten for spin manifolds in the context
of quantum field theory \cite{W1, W2}. We refer the reader to
\cite[Section 7.4]{HBJ} for a more detailed treatment of elliptic
genera as the $\chi_y$ genus of the loop space of a manifold, where
the convergence of the heuristic formula is more carefully treated.
\end{rmk}

\subsection{The type of the action}\label{type of action}

In this subsection we recall what the type of the action of $\ac$ is and prove how it is related to the index of $(\M,\J)$ (see \cite[Appendix III, Sections\ 6 and 8]{HBJ}).

\begin{defin}\label{balanced}\cite[page\ 179]{HBJ}
Given an integer $N$, the $S^1$-action on $\ac$ is said to be \textit{$N$-balanced} if for every $P\in \fix$ the residue class of $w_{1}(P)+\cdots+w_{n}(P)\pmod{N}$ does not depend on $P$.
Given an $N$-balanced action, the common residue class of $w_{1}(P)+\cdots+w_{n}(P)$ modulo $N$ is called the \textit{type of the action}.
\end{defin}
In order to understand the geometric meaning of the previous definitions we need to recall notions about the first Chern class of $(\M,\J)$, its equivariant extension and the index of $(\M,\J)$.
Let $c_1\in H^2(\M;\Z)$ be the first Chern class of the tangent bundle of $(\M,\J)$ and $c_1^{S^1}\in H^2_{S^1}(\M;\Z)$ the equivariant first Chern class of $\ac$. We recall that, given a fixed point $P$ and the restriction map $r_P\colon H^2_{S^1}(\M;\Z)\to H^2_{S^1}(\{P\};\Z)$, there exists $x \in H^2_{S^1}(\{P\};\Z)$ such that $H^2_{S^1}(\{P\};\Z)\simeq \Z[x]$ and
\begin{equation}\label{efcc}
r_P\left(c_1^{S^1}\right)=(w_{1}(P)+\cdots+w_{n}(P))x\,.
\end{equation}
 Henceforth we simply denote the restriction of an equivariant cohomology class $c\in H^*_{S^1}(\M;\Z)$ to a fixed
point $P$ by $c(P)$.

Next we recall what the index of $(\M,\J)$ is.
\begin{defin}\label{def index}\cite{S}
Given an almost complex manifold $(\M,\J)$, the \textit{index} of $(\M,\J)$ is the largest integer $\k0$ such that, modulo torsion elements, $c_1=\k0 \eta$
for some nonzero $\eta \in H^2(\M;\Z)$.
\end{defin}
Therefore $\k0$ is zero exactly if $c_1$ is torsion, and is otherwise the largest integer such that $c_1/\k0 \in H^2(\M;\Z)$.
\begin{rmk}\label{properties index}
\begin{itemize}[leftmargin=20pt]
\item[\vphantom{workaround}]
\item[(1)] In \cite{S} the third author proved that, given a compact almost complex manifold of dimension $2n$ endowed with a circle action that preserves $\J$ and with isolated fixed points, if the Todd genus of $(\M,\J)$ is nonzero, then the index is bounded above by $n+1$.
\item[(2)] If $\M$ is simply connected, then Hurewicz theorem asserts that $\pi_2(\M)\simeq H_2(\M)$. By the universal coefficient theorem, this implies that the group of homomorphisms $\homo(\pi_2(\M);\Z)\simeq \homo(H_2(\M);\Z)$ is isomorphic to $H^2(\M;\Z)$. It follows that for simply connected almost complex manifolds
the index coincides with the non-negative integer $D$ such that $\langle c_1 , \pi_2(\M)\rangle = D \Z$. If $D\neq 0$, then this positive integer is also known as the \textit{minimal Chern number} of $\M$ (see \cite[Definition 6.4.2]{DS}; observe that the existence of a symplectic structure in \cite[Definition 6.4.2]{DS} is not needed).
\item[(3)] If $(\M,\omega)$ is a compact symplectic manifold endowed with a Hamiltonian circle action with isolated fixed points (see Definition \ref{hamiltonian t action}), then it is simply connected and its Todd genus does not vanish. By part (2) of this remark it follows that for such manifolds the index coincides with the minimal Chern number (see also \cite[Remark 3.4]{S})
\end{itemize}
\end{rmk}
The next proposition, which is already known (see \cite[Appendix III, Section \ 8]{HBJ} and \cite[Section \ 9]{BT}), gives a relation between the index and the action. For the sake of completeness we include a proof here.
\begin{prop}\label{balanced index}
Let $(\M,\J)$ be a compact almost complex manifold of dimension $2n$ endowed with an $S^1$-action preserving $\J$ and let $\k0$ be its index and $N$ a positive integer dividing $\k0$.
 Then the $S^1$-action is $N$-balanced.
 \end{prop}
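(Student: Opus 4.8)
The plan is to read the balancedness condition off the equivariant first Chern class and to produce the required mod-$N$ congruence from an $S^1$-equivariant $N$-th root of the canonical bundle. Write $s(P):=w_1(P)+\cdots+w_n(P)$. By \eqref{efcc} we have $c_1^{S^1}(P)=s(P)\,x$; concretely, since the action preserves $\J$ the determinant line bundle $\wedge^n T\M$ is naturally $S^1$-equivariant and $s(P)$ is the weight of the induced $S^1$-representation on its fibre over $P$. Hence the action is $N$-balanced exactly when $s(P)\bmod N$ is independent of $P\in\fix$, which (as $\fix\neq\emptyset$) is the statement I must prove.

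By Definition~\ref{def index} write $c_1=\k0\,\eta+t$ with $\eta\in H^2(\M;\Z)$ and $t$ a torsion class. (If $\k0=0$ then $c_1$ is torsion and the last step below, applied directly to $c_1^{S^1}$, shows that $s(P)$ is even constant, so I assume $\k0\neq0$.) The central step is to lift $\eta$ to an equivariant class $\widetilde\eta\in H^2_{S^1}(\M;\Z)$, i.e.\ to show that $\eta$ lies in the image of the restriction-to-fibre map $r\colon H^2_{S^1}(\M;\Z)\to H^2(\M;\Z)$. In the Serre spectral sequence of $\M\hookrightarrow \M\times_{S^1}ES^1\to BS^1$ this image equals the kernel of the differential $d_2\colon H^2(\M;\Z)\to H^1(\M;\Z)$ (there is no further obstruction, since $H^3(BS^1;\Z)=0$), and the target $H^1(\M;\Z)$ is torsion-free. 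Because $c_1=r(c_1^{S^1})$ already lies in the image of $r$, we get $0=d_2(c_1)=\k0\,d_2(\eta)+d_2(t)$; here $d_2(t)$ is a torsion element of a torsion-free group and so vanishes, giving $\k0\,d_2(\eta)=0$ and therefore $d_2(\eta)=0$. Thus $\eta$ lifts to some $\widetilde\eta$.

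To conclude I would compare $c_1^{S^1}$ with this lift. Set $\alpha:=c_1^{S^1}-\k0\,\widetilde\eta\in H^2_{S^1}(\M;\Z)$; then $r(\alpha)=c_1-\k0\,\eta=t$ is torsion. Now I use the localization fact that an equivariant class whose fibre restriction is rationally trivial has constant fixed-point weights: because $\fix\neq\emptyset$ the class $x$ restricts to a nonzero element at each fixed point, so the edge map $\pi^*\colon H^2(BS^1;\Q)\to H^2_{S^1}(\M;\Q)$ is injective and $\ker(r\otimes\Q)=\Q\,x$; hence $\alpha$ is a rational multiple of $x$ and $\alpha(P)=c_0\,x$ for a single $c_0\in\Z$ independent of $P$. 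Writing $\widetilde\eta(P)=e(P)\,x$ with $e(P)\in\Z$, the equality $\alpha(P)=c_0\,x$ becomes $s(P)-\k0\,e(P)=c_0$, so $s(P)=\k0\,e(P)+c_0\equiv c_0\pmod N$ because $N\mid\k0$. This is precisely the $N$-balancedness of the action.

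The main obstacle is the equivariant lift in the second paragraph: geometrically it amounts to endowing an $S^1$-equivariant $N$-th root of the canonical bundle $\wedge^n T^*\M$ (the line bundle $L$ with $N\,c_1(L)=-c_1$) with a circle action, and a priori the torsion summand $t$ of $c_1$ could obstruct this. What rescues the argument is that the lifting obstruction $d_2(\eta)$ lives in the torsion-free group $H^1(\M;\Z)$, so the relation $\k0\,d_2(\eta)=0$ already forces it to vanish. The remaining ingredients---the identification $\ker(r\otimes\Q)=\Q\,x$ and the final weight bookkeeping---are routine.
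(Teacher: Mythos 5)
Your proof is correct, and it takes a genuinely different route from the paper's. The paper argues geometrically: by \eqref{efcc} it suffices to show that $\bigl(c_1^{S^1}(P_1)-c_1^{S^1}(P_2)\bigr)/(Nx)$ is an integer for every pair of fixed points; this is immediate when $P_1,P_2$ lie in the same component of $\fix$, and when they lie in different components the paper rotates a connecting path into a topological two-sphere $S$, parametrizes it equivariantly by a smooth $S^2$ whose points have common stabilizer $\Z_k$, and applies the localization formula \eqref{ABBV} to identify $\int_S\eta=\int_S c_1/N$ with $\bigl(c_1^{S^1}(P_1)-c_1^{S^1}(P_2)\bigr)/(Nk\,x)$, whose integrality gives the claim. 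You instead stay entirely in the Borel model: you produce an equivariant lift $\widetilde\eta$ of $\eta$ via the Serre spectral sequence of $\M\hookrightarrow\M\times_{S^1}ES^1\to BS^1$ (the key point being that the obstruction $d_2(\eta)$ lies in $E_2^{2,1}\cong H^1(\M;\Z)$, which is torsion-free, and is annihilated by $\k0$), and then use $\ker(r\otimes\Q)=\Q\,x$ --- valid because a fixed point gives a section making $\pi^*$ injective, and $E_2^{1,1}=0$ --- to force $c_1^{S^1}-\k0\,\widetilde\eta$ to restrict to one and the same integer multiple of $x$ at every fixed point. I checked the spectral-sequence input ($E_2^{2,1}\cong H^1(\M;\Z)$, $E_2^{3,0}=0$, torsion-freeness of $H^1(\M;\Z)$) and the endgame, including the degenerate case $\k0=0$; all of it is sound. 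Your route buys a clean, purely topological treatment of the torsion part of $c_1$ and avoids the sphere construction and stabilizer bookkeeping; the paper's route is more elementary (it uses only the localization formula already stated in the paper) and yields as a by-product the refinement recorded in the remark after the proof, namely that on the closure $\mathcal{M}_k$ of the set of points with stabilizer $\Z_k$ the action is $N\cdot k$-balanced. One point worth flagging, common to both arguments: connectedness of $\M$ is used implicitly --- you need $H^0(\M;\Q)=\Q$ so that $E_2^{2,0}=\Q$ and hence $\ker(r\otimes\Q)=\Q\,x$, while the paper needs a path joining the two fixed points --- and the statement genuinely fails for disconnected manifolds (e.g.\ two disjoint copies of $\C P^1$ rotated at speeds $1$ and $2$).
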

\begin{rmk}
Observe that if $\k0=0$, then the proposition says that the action is balanced for every $N$, hence the sum of the weights at each fixed point $P$ is independent
of $P$. Moreover, if the fixed point set is discrete, then Lemma 2.13
in \cite{GPS} gives that
 this sum is always zero.
\end{rmk}
\begin{proof}[Proof of Proposition \ref{balanced index}]
By \eqref{efcc} it is sufficient to prove that for all pairs of distinct fixed points $P_1,P_2\in \fix$, one has
$$
\frac{c_1^{S^1}(P_1)-c_1^{S^1}(P_2)}{N\cdot x}\;\; \in \;\;\Z.
$$
First suppose that $P_1$ and $P_2$ belong to the same connected component $\M_0$ of $\fix$. Observe that the weights of the
$S^1$-action at a fixed point $s\in \M_0$ on the component of $T\M$ tangent to $\M_0$ are zero, whereas those in the normal component to $\M_0$ in $\M$ do not depend
on $s\in \M_0$. By \eqref{efcc} it follows that $c_1^{S^1}(P_1)=c_1^{S^1}(P_2)$.

Now suppose that $P_1$ and $P_2$ are fixed points belonging to different connected components of $\fix$.
Let $\gamma\colon [0,1]\to \M$ be a continuous
path such that $\gamma(0)=P_1$, $\gamma(1)=P_2$ and the image of the interval $(0,1)$ avoids other fixed points;
this is possible since the normal bundle to a fixed component is always of positive even dimension.
Rotating this path using the $S^1$-action one obtains a topological two-sphere $S$ with fixed points given by $P_1$ and $P_2$.
Assume that the common stabilizer of the points of $S$ is $\Z_k$ (observe that $k$ could be one, in which case the common stabilizer if trivial).
Let $\iota \colon S^2\to \M$ be an equivariant map from a smooth sphere $S^2$, where the action on $S^2$ is given in cylindrical coordinates by
$S^1 \ni \lambda=e^{\iota \theta'} * (\theta, h)=(\theta + k \theta',h)$, and such that the image is exactly $S$.
Let $Q_1$ and $Q_2$ be the two fixed points on $S^2$ with $\iota(Q_j)=P_j$, $j\in \{1,2\}$, and assume that  the weight of the $S^1$-action on $TS^2|_{Q_1}$ is $k$
and that on $TS^2|_{Q_2}$ is $-k$.
If we pull-back the tangent bundle of $\M$ to $S^2$, then the weights of the $S^1$-action on $\iota^*(T\M)$ at the fixed point $Q_j$ coincide with those at $P_j\in \M$, for $j\in \{1,2\}$.
Thus \eqref{efcc} implies that $\iota^*(c_1^{S^1})(Q_j)=c_1^{S^1}(P_j)$ and we have
\begin{align}\label{eq_int}
\Z\ni \int_S c_1&=\int_{S^2}\iota^*(c_1)=\int_{S^2}\iota^*\left(c_1^{S^1}\right)=\frac{\iota^*\left(c_1^{S^1}\right)(Q_1)-\iota^*\left(c_1^{S^1}\right)(Q_2)}{k\, x}
\nonumber \\&=\frac{c_1^{S^1}(P_1)-c_1^{S^1}(P_2)}{k\,x},
\end{align}
where the second equality follows by degree reasons and the last equality from the localization formula \eqref{ABBV}.

If $\k0$ is zero, or equivalently if $c_1$ is torsion, then the integral on the left-hand side is zero. Hence $c_1^{S^1}(P_1)=c_1^{S^1}(P_2)$ for every pair of fixed points $P_1$ and $P_2$, and
the action is balanced for every $N$.

Otherwise, by the definition of $\k0$ and the assumption that $N$ divides $\k0$, we have that $c_1=N \eta$ for some non-zero $\eta\in H^2(\M;\Z)$. From
\eqref{eq_int} we have that
$$
\Z \ni \int_S \eta = \int_S \frac{c_1}{N}=\frac{c_1^{S^1}(P_1)-c_1^{S^1}(P_2)}{N\cdot k\,x}\,,
$$
and the claim follows.
\end{proof}

\begin{rmk}
\begin{itemize}[leftmargin=20pt]
\item[\phantom{workarounf}]
\item[(1)]
In \cite[Lemma 2.3]{GPS} it was proved that if $c_1$ is torsion or, equivalently, if $\k0=0$, and in addition the fixed points of the action are isolated, then
the sum of the weights at each fixed point is indeed zero, and so is the type of the action.
\item[(2)] Let $\mathcal{M}_k$ be the closure of the set of points with stabilizer $\Z_k$. Then from the proof of the proposition it follows that, for all the fixed points in $\mathcal{M}_k$, the $S^1$-action is $N\cdot k$ balanced on $\mathcal{M}_k$.
\end{itemize}
\end{rmk}

\subsection{The rigidity of the elliptic genus}\label{section rigidity}
Let $(\M, \J, S^1)$ be an almost complex manifold endowed with a
circle action compatible with the almost complex structure $\J.$ Let
$N$ be a positive integer that divides the index $\k0$ of $(M, \J)$.
In this subsection, we define what it means for the elliptic genus
$\varphi_N(\M)$ of level $N$ to be rigid, but first we recall what
rigidity means for an equivariant bundle.

\begin{defin}
An $S^1$-equivariant bundle $V$ is called \emph{rigid} if its
equivariant index $\ind_{S^1}(V)$ is independent of $t$, hence it
lies in $\mathbb{Z}\subset \Z[t,t^{-1}]$.
\end{defin}
Note that, since taking $t=1$ corresponds to taking the trivial
$S^1$-representation, rigidity of an equivariant bundle $V$ implies
that
$$
\ind_{S^1}(V)=\ind_{S^1}(V)|_{t=1}=\ind(V).
$$
Now let $R_{N, (0,1)}(q)$ be the virtual bundle already defined in
equation~\eqref{virtual bundle} for $(k, l)=(0, 1)$ by
\begin{equation*}
R_{N, (0,1)}(q):=\left(\wedge_{-\zeta_{N}}T^*\otimes
\bigotimes_{r=1}^\infty \wedge_{-\zeta_{N}q^{r}}T^*\otimes
\wedge_{-\zeta_{N}^{-1}q^{r}}T\otimes S_{q^{r}}T^* \otimes
S_{q^{r}}T\right)\,.
\end{equation*}
We can write
$$
R_{N, (0,1)}(q)=\sum_{j\geq 0}R_j\, q^j
$$
for some finite dimensional virtual bundles $R_j$ defined for every
non-negative integer $j\,.$ For instance
\begin{align*}
R_0=\wedge_{-\zeta_{N}}T^*\,,\,\, \text{and}\quad
R_1=\wedge_{-\zeta_{N}}T^*\otimes\left(\left(1-\zeta_N\right)T^*+\left(1-\zeta_N^{-1}\right)T\right)\,.
\end{align*}
Recall that the normalized elliptic genus $\tilde{\varphi}_{N, (0,
1)}$ of level $N$ is defined as the index of the virtual bundle
$R_{N, (0,1)}(q)$ and the elliptic genus $\varphi_{N}(\M)$ of level
$N$ is defined by
$$
\varphi_{N}(\M):=\varphi_{N, (0,
1)}(q)=m(q)^{\frac{\dim{(\M)}}{2}}\tilde{\varphi}_{N, (0,
1)}=m(q)^{\frac{\dim{(\M)}}{2}}\sum_{j\geq 0}\ind(R_j)\,q^j,
$$
where
$$
m(q):=\dfrac{1}{1-\zeta_N}\prod_{r=1}^\infty
\dfrac{\left(1-q^r\right)^2}{\left(1-\zeta_Nq^r\right)\left(1-\zeta_N^{-1}q^r\right)}\,.
$$
We define the \textit{equivariant elliptic genus
$\varphi_N(\M, t)$ of level $N$} as

\begin{equation*}
\varphi_{N}(\M, t):=m(q)^{\frac{\dim{(\M)}}{2}}\sum_{j\geq
0}\ind_{S^1}(R_j)\,q^j\,.
\end{equation*}

\begin{defin}
The equivariant elliptic genus $\varphi_N(\M, t)$ of level $N$  is
\textit{rigid} if the equivariant bundles $R_j$ are rigid for every
$j\,.$
\end{defin}
Note that the rigidity of the equivariant elliptic genus
$\varphi_N(\M, t)$ just means that
$$
\varphi_N(\M,t)=\varphi_N(\M, t)|_{t=1}=\varphi_N(\M).
$$

If $N=2$ and $(\M, \J, S^1)$ is a spin manifold, then the rigidity of
the equivariant elliptic genus $\varphi_2(\M, t)$ was conjectured by
Witten \cite{W1, W2} following theoretical physics ideas on
the loop space of $\M$. The rigidity of $\varphi_2(\M, t)$ was first
proven by Taubes \cite{Ta} who made Witten's program rigorous  and
later by Bott and Taubes \cite{BT} who simplified Taubes original
approach using the language of equivariant index theory and
equivariant cohomology. The following theorem due to Hirzebruch
generalizes the Rigidity Theorem by Bott, Taubes and Witten to
almost complex manifolds that are not necessarily spin.

\begin{thm}[\cite{HBJ}, theorem on page 181]\label{rigidity}
$\;$\\
Let $(\M,\J)$ be a compact almost complex manifold with index $\k0$.
Suppose that $(\M,\J)$ is endowed with an $S^1$-action preserving
$\J$. Then for every positive integer $N$ dividing $\k0$ the
elliptic genus $\varphi_N(\M,t)$ of level $N$ is rigid, hence it
equals the (non-equivariant) elliptic genus $\varphi_N(\M)$.
If the type of the action is not zero  $\pmod{N}$, then
$\varphi_N(\M)\equiv 0$.
\end{thm}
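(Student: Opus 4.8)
The plan is to reduce rigidity to a one-variable complex-analytic statement, from which the vanishing will drop out as well. I write $t=e^{2\pi i z}$ with $z\in\C$ and set $f(z,\tau):=\varphi_N(\M,e^{2\pi i z})$, viewed as a function of $z$ for fixed $\tau\in\H$. Since $t=1$ corresponds to $z\in\Z\tau+\Z$, rigidity is exactly the claim that $f(\cdot,\tau)$ is constant. First I would apply the $K$-theoretic localization formula \eqref{index K} termwise to the virtual bundles $R_j$ of \eqref{virtual bundle} and resum using the product expansions \eqref{eq Jacobi triple product} and \eqref{eq eta}; since at a fixed point $P$ the weight-$w_j(P)$ line summand of $T\M$ contributes $t^{w_j(P)}=e^{2\pi i w_j(P)z}$, so that the variable $\tfrac{x}{2\pi i}$ in $\mathcal Q_N$ becomes $w_j(P)z$ (cf.\ \eqref{restriction chern}), this expresses $f$ as a finite sum over the fixed points,
\[
f(z,\tau)=\left(\frac{\vartheta'(\tau;0)}{2\pi i\,\vartheta(\tau;-\tfrac1N)}\right)^{\!n}\sum_{P\in\fix}\ \prod_{j=1}^n\frac{\vartheta\!\left(\tau;\, w_j(P)z-\tfrac1N\right)}{\vartheta\!\left(\tau;\, w_j(P)z\right)},
\]
each summand being a ratio of Jacobi theta functions in $z$.

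Next I would read off the behaviour of $f$ under the lattice $\Z\tau+\Z$ from the elliptic shifts \eqref{eq theta elliptic transformations}. Under $z\mapsto z+1$ both the numerator and denominator theta pick up the same sign $(-1)^{w_j(P)}$ (as $w_j(P)\in\Z$), so they cancel and each summand, hence $f$, is invariant. Under $z\mapsto z+\tau$ one iterates the second shift to get $\vartheta(\tau;\,\zeta+m\tau)=(-1)^m e^{\pi i(2m\zeta+m^2\tau)}\vartheta(\tau;\zeta)$ for $m\in\Z$; the $z$- and $\tau$-dependent automorphy factors again cancel between numerator and denominator, leaving a residual factor $\zeta_N^{-w_j(P)}$ in the $j$-th factor, so the $P$-th summand is multiplied by $\zeta_N^{-\sum_j w_j(P)}$. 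Here the hypothesis $N\mid\k0$ enters decisively: by Proposition \ref{balanced index} the action is $N$-balanced, so $\sum_j w_j(P)$ is congruent mod $N$ to the type $s$ of the action (Definition \ref{balanced}) \emph{independently of} $P$, giving the clean transformation
\[
f(z+\tau,\tau)=\zeta_N^{-s}\,f(z,\tau).
\]

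The crucial analytic input, and the step I expect to be the main obstacle, is that $f(\cdot,\tau)$ is actually holomorphic on all of $\C$. Each summand has apparent poles where $\vartheta(\tau;w_j(P)z)=0$, i.e.\ at the torsion points $z\in\tfrac1{w_j(P)}(\Z\tau+\Z)$, and one must show these cancel across the fixed-point sum. The mechanism is that the fixed points producing a pole at a given torsion value $z_0$ all lie on a single component of the fixed locus $\M^{\Z_k}$ of the cyclic subgroup $\Z_k\subset S^1$ (with $k\mid w_j(P)$); the principal part of $f$ there reassembles into the same kind of localized expression for that submanifold, equipped with the residual $S^1/\Z_k$-action, and an inductive application of the identity forces it to vanish. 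This is precisely the ``transfer'' argument of Bott--Taubes \cite{BT}, making rigorous Taubes' program \cite{Ta}; an alternative route exploits the modular covariance of $f$ in $\tau$ for $\Gamma_1(N)$ coming from \eqref{theta transformation}. As this is the technical heart of the rigidity theorem, I would invoke \cite{BT, Ta, HBJ} rather than reprove it from scratch.

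Granting holomorphicity, the conclusion is short. If $s\equiv 0\pmod N$ then $f(\cdot,\tau)$ is doubly periodic and holomorphic, hence descends to a holomorphic function on the compact torus $\C/(\Z\tau+\Z)$ and is constant by Liouville's theorem; thus $\varphi_N(\M,t)=\varphi_N(\M)$ is rigid. If $s\not\equiv 0\pmod N$, I consider $g:=f^N$: the relations $f(z+1,\tau)=f(z,\tau)$ and $f(z+\tau,\tau)=\zeta_N^{-s}f(z,\tau)$ yield $g(z+1,\tau)=g(z,\tau)$ and $g(z+\tau,\tau)=\zeta_N^{-sN}g(z,\tau)=g(z,\tau)$, so $g$ is elliptic and holomorphic, hence constant. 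Then $f$ is a continuous map from the connected plane into the finite set of $N$-th roots of that constant, so $f$ itself is constant; but a nonzero constant contradicts $f(z+\tau,\tau)=\zeta_N^{-s}f(z,\tau)$ with $\zeta_N^{-s}\neq 1$. Hence $f\equiv 0$, which simultaneously gives rigidity and the vanishing $\varphi_N(\M)=0$, completing the plan.
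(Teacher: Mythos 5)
The paper itself offers no proof of this theorem: it is imported verbatim from \cite{HBJ} (theorem on page 181), with the history (Witten's conjecture, Taubes' proof, Bott--Taubes' simplification) recounted but no argument given. So the only meaningful comparison is with the proof in the cited sources, and your proposal is a faithful reconstruction of that classical argument. Your localized expression for $\varphi_N(\M,e^{2\pi i z})$ as a sum over fixed points of theta quotients is correct (it agrees with the formula the paper itself uses in the proof of Theorem \ref{main number theory}); the lattice transformation laws follow from \eqref{eq theta elliptic transformations}, with $N$-balancedness (Proposition \ref{balanced index}) being exactly what makes the multiplier $\zeta_N^{\pm s}$ independent of the fixed point (the sign of the exponent depends on a sign convention in the paper's stated elliptic shift, but is immaterial since either way the character is nontrivial precisely when $s\not\equiv 0 \pmod N$); and, granted holomorphy in $z$, the Liouville argument gives rigidity for trivial character and, via your $f^N$ trick, vanishing for nontrivial character. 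All of these steps check out.

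What you should be clear-eyed about is that the step you black-box --- holomorphy of $f(\cdot,\tau)$, i.e.\ cancellation of the poles at torsion points across the fixed-point sum --- is not a technical lemma but the entire mathematical content of the rigidity theorem; the transformation laws and the Liouville argument are the elementary part. Your description of the mechanism (poles of a given order assembling over components of $\M^{\Z_k}$ carrying the residual action, handled by the Bott--Taubes transfer) is the right picture, but invoking \cite{Ta,BT,HBJ} there means your proposal establishes ``holomorphy implies rigidity and vanishing'' rather than the theorem itself. Given that the paper cites the whole theorem without proof, this delegation is defensible, but it should be labeled as such. Two minor corrections: $t=1$ corresponds to $z\in\Z$, not $z\in\Z\tau+\Z$ (harmless, since what you prove is constancy in $z$); and to obtain rigidity of each bundle $R_j$, which is the actual definition in Section \ref{section rigidity}, you should add that $m(q)^n$ is an invertible $q$-series with $t$-independent coefficients, so constancy of $f$ for all $\tau\in\H$ forces each coefficient $\ind_{S^1}(R_j)$ to be $t$-independent.
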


\section{From number theory to geometry}\label{number theory to geometry}

\subsection{From the index to the Betti numbers: First consequences}\label{number theory to geometry I}

The next result is an easy consequence of Theorem \ref{rigidity} and identifies a category of manifolds for which certain elliptic genera always vanish.
\begin{prop}\label{toric vanishing}
Let $(\M,\omega,\psi)$ be a symplectic toric manifold with index $\k0$. Then for every integer $N\geq 2$ dividing $\k0$, the elliptic genus $\varphi_N(\M)$ of level $N$ vanishes identically.
\end{prop}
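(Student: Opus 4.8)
The plan is to exhibit a single circle subgroup of the torus whose induced action has nonzero type modulo $N$, and then to invoke the vanishing clause of the rigidity theorem (Theorem \ref{rigidity}). Concretely, I would fix a $\T$-invariant almost complex structure $\J$ compatible with $\omega$; every subcircle $S^1 \subset \T$ preserves $\J$, and since $N$ divides the index $\k0 = \k0(\M,\J)$, Proposition \ref{balanced index} guarantees that each such subcircle action is $N$-balanced, so its type is well-defined. If I can arrange the type to be nonzero modulo $N$, then Theorem \ref{rigidity} yields $\varphi_N(\M) \equiv 0$ at once.

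First I would set up coordinates, identifying the weight lattice of $\T$ with $\Z^n$ and the cocharacter lattice with its dual. The $\T$-fixed points $P$ correspond to the vertices of the Delzant polytope $\Delta$, and by the smoothness condition (D3) of Definition \ref{delzant} the isotropy weights $\alpha_1(P), \ldots, \alpha_n(P) \in \Z^n$ at each $P$ form a $\Z$-basis of $\Z^n$. Writing $B_P \in \GL_n(\Z)$ for the matrix whose columns are these weights, the restriction of the $\T$-equivariant first Chern class at $P$ is $c_P := \sum_{i=1}^n \alpha_i(P) = B_P \mathbf{1}$, where $\mathbf{1} = (1,\ldots,1)^{\mathsf T}$ (the torus analogue of \eqref{efcc}). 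For a primitive cocharacter $\xi \in \Z^n$ the weights of the corresponding subcircle at $P$ are $w_i(P) = \langle \alpha_i(P), \xi\rangle$, so their sum is $\langle c_P, \xi\rangle$, and the type of this subcircle is $\langle c_P, \xi\rangle \bmod N$.

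The crux is to show that some $c_P$ is not divisible by $N$, and here the key observation is purely linear-algebraic: since $B_P \in \GL_n(\Z)$, if $c_P = B_P \mathbf{1}$ were in $N\Z^n$ then $\mathbf{1} = B_P^{-1} c_P$ would lie in $N\Z^n$ as well, forcing $1 \equiv 0 \pmod N$, which is absurd for $N \geq 2$. Hence $c_P \not\equiv 0 \pmod N$, so some component $(c_P)_j$ is a nonzero residue modulo $N$. ($N$-balancedness shows $c_P \bmod N$ is in fact independent of $P$, but I will not even need this.) Taking $\xi = e_j$ then produces a subcircle of type $(c_P)_j \not\equiv 0 \pmod N$, and Theorem \ref{rigidity} gives $\varphi_N(\M) \equiv 0$.

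The main (and essentially only) obstacle is verifying that a circle of nonzero type exists, i.e.\ that the sum of the edge vectors at some vertex is not divisible by $N$; everything else is bookkeeping. As just indicated, this follows immediately from the Delzant smoothness condition via the $\GL_n(\Z)$ argument, so no delicate estimate is required, and in particular I do not need the subcircle to have isolated fixed points. The one point to keep clean is the identification of $\sum_i \alpha_i(P)$ with the restriction of the equivariant first Chern class and of the isotropy weights with the polytope edge vectors; both are standard consequences of the normal-form description of toric fixed points.
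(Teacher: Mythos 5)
Your proposal is correct and follows essentially the same route as the paper: both proofs use the Delzant smoothness condition at a torus fixed point to produce a subcircle whose type is nonzero modulo $N$ (well-defined via Proposition \ref{balanced index}) and then invoke the vanishing clause of Theorem \ref{rigidity}. The only difference is cosmetic: the paper normalizes the weights at $P$ to the standard basis by a $\GL_n(\Z)$-transformation and then perturbs a candidate primitive vector $\alpha$ to make its coordinate sum nonzero $\pmod N$, whereas you keep general coordinates, note that $B_P\mathbf{1}\not\in N\Z^n$ since $B_P^{-1}$ is integral, and take a coordinate subcircle $\xi=e_j$ — a slightly tidier way of carrying out the same elementary step.
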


\begin{proof}
Let $N$ be an integer dividing the index $\k0$. By Theorem \ref{rigidity} it is enough to find a subcircle $S^1$ of the torus $\T$ acting on $\M$ such that the type of the $S^1$-action is not zero $\pmod{N}$. Let $P$ be a fixed point of the $\T$-action on $\M$, the latter being of dimension $2n$. Modulo a GL$_n(\Z)$-transformation we can assume that the weights of the $\T$ action at $P$ are the vectors in the standard basis $x_1,\ldots,x_n$ of $\ts\simeq (\R^n)^*$, namely $x_j(\xi_1,\ldots,\xi_n)=\xi_j$ for every $j\in\{1,\ldots,n\}$ and every $(\xi_1,\ldots,\xi_n)\in \ft\simeq \R^n$. Any circle subgroup $S^1$ of $\T$ is determined by a vector $\alpha=(\alpha_1,\ldots,\alpha_n)$, where $\alpha_j\in \Q$ for every $j$, and
$S^1=\exp\{t\cdot (\alpha_1,\ldots,\alpha_n): t\in \R\}\subset \T$. By rescaling the rational coordinates $\alpha_j$ we can assume that they are indeed integral and that the vector $\alpha$ is primitive
in the lattice $\ell^*\simeq (\Z^n)^*$, namely that if $\alpha=m\cdot \beta$ for some $m\in \Z$ and $\beta\in \ell^*$, then $m=\pm 1$.
If we restrict the $\T$ action to that of the above circle, the weights at $P$ of the induced $S^1$-action are given by
$x_j(\alpha)=\alpha_j$ for every $j\in \{1,\ldots,n\}$. Therefore, in order to prove the proposition, it is enough to find a primitive $\alpha\in \ell^*$ such that $\alpha_1+\cdots + \alpha_n$
is not zero $\pmod{N}$.
This is easily seen to be always possible, as if the picked $\alpha=(\alpha_1,\ldots,\alpha_n)$ satisfying $\alpha_1+\cdots +\alpha_n\equiv 0 \pmod N$, it would be enough to replace
it with $(\alpha_1+1,\ldots,\alpha_n)$, and if the latter were not primitive, dividing the coordinates by the common divisor would yield the desired primitive vector.
\end{proof}

The next result is an application of Theorem \ref{special values}.
\begin{prop}\label{divisibility chiy genus}
Let $(\M,\J)$ be a compact almost complex manifold with index $\k0$. Suppose that for every integer $N\geq 2$ dividing $\k0$, the elliptic genus $\varphi_N(\M)$ of level $N$ vanishes identically. Then $$\sum_{j=0}^{\k0-1}(-y)^j \quad \text{divides} \quad \chi_y(\M).$$
\end{prop}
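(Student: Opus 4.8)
The plan is to read off the values of $\chi_y(\M)$ at roots of unity from the vanishing hypothesis, and then to recognize these as the full set of roots of $\sum_{j=0}^{\k0-1}(-y)^j$. First I would note that if $\varphi_N(\M)$ vanishes identically as a modular form, then so does every slash $\varphi_N(\M)|_n A$ with $A \in \SL_2(\Z)$, and hence by the identification \eqref{eq elliptic genus cusps} every expansion $\varphi_{N,(k,\ell)}(\M)$ vanishes identically; in particular its constant term, i.e.\ the value of $\varphi_N(\M)$ at the corresponding cusp, is zero. Applying Theorem \ref{special values} in the case $k = 0$, the value at the cusp represented by the primitive $N$-division point $\frac{\ell}{N}$ with $\gcd(\ell,N) = 1$ is $\frac{\chi_y(\M)}{(1+y)^n}$ evaluated at $y = -\zeta_N^\ell$. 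Since $N \geq 2$ and $\gcd(\ell,N)=1$ force $\zeta_N^\ell \neq 1$, the factor $(1+y)^n$ is nonzero there, so this yields $\chi_y(\M)|_{y = -\zeta_N^\ell} = 0$. Letting $N$ range over all divisors $\geq 2$ of $\k0$ and $\ell$ over residues coprime to $N$, I would conclude that $\chi_y(\M)$ vanishes at $y = -\omega$ for every primitive $N$-th root of unity $\omega$ with $N \mid \k0$ and $N \geq 2$.

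Next I would determine the zero set of $\sum_{j=0}^{\k0-1}(-y)^j$. Summing the geometric series gives
\[
\sum_{j=0}^{\k0-1}(-y)^j = \frac{1-(-y)^{\k0}}{1+y},
\]
whose $\k0 - 1$ roots are precisely the points $y = -\omega$, where $\omega$ ranges over the $\k0$-th roots of unity different from $1$ (the value $\omega = 1$ is cancelled by the denominator $1+y$). These roots are pairwise distinct, so the polynomial has only simple roots. Now every $\k0$-th root of unity $\omega \neq 1$ has a multiplicative order $N$ dividing $\k0$ with $N \geq 2$, and is therefore a primitive $N$-th root of unity, i.e.\ $\omega = \zeta_N^\ell$ for some $\ell$ with $\gcd(\ell,N)=1$. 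By the previous paragraph $\chi_y(\M)$ vanishes at each such $y = -\omega$, so every one of the $\k0 - 1$ simple roots of $\sum_{j=0}^{\k0-1}(-y)^j$ is a root of $\chi_y(\M)$.

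Finally I would deduce the divisibility. Since $\sum_{j=0}^{\k0-1}(-y)^j$ has only simple roots, all of which are roots of $\chi_y(\M)$, it divides $\chi_y(\M)$ in $\C[y]$; and because its leading coefficient is $(-1)^{\k0-1} = \pm 1$ while $\chi_y(\M) \in \Z[y]$, polynomial division (equivalently, Gauss's lemma) upgrades this to divisibility in $\Z[y]$. The proof is essentially a packaging of Theorem \ref{special values}, so the only genuinely delicate point is the combinatorial bookkeeping: one must verify that as $N$ runs through the divisors $\geq 2$ of $\k0$ and $\ell$ through the residues coprime to $N$, the numbers $-\zeta_N^\ell$ exhaust \emph{all} roots of $\sum_{j=0}^{\k0-1}(-y)^j$ with no omissions, which is exactly the classical partition of the nontrivial $\k0$-th roots of unity into primitive $N$-th roots over the divisors $N \mid \k0$. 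This, together with the (automatic) passage from vanishing of the full form to vanishing of each cusp value, is where the care is needed.
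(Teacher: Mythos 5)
Your proposal is correct and follows essentially the same route as the paper: both identify the roots of $\sum_{j=0}^{\k0-1}(-y)^j$ as the points $y=-\omega$ with $\omega$ a non-trivial $\k0$-th root of unity, and both deduce the vanishing of $\chi_y(\M)$ at each such point from Theorem \ref{special values} together with the hypothesis that $\varphi_N(\M)\equiv 0$ for every divisor $N\geq 2$ of $\k0$. Your bookkeeping (each non-trivial $\k0$-th root of unity is a primitive $N$-th root for some $N\mid\k0$, $N\geq 2$) is exactly the paper's $\gcd(\ell,\k0)$ case analysis in different words, and your extra remarks (vanishing at all cusps via the slash operator, non-vanishing of $(1+y)^n$, simplicity of the roots, divisibility in $\Z[y]$) merely make explicit details the paper leaves implicit.
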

\begin{proof}
If $\k0=1$ there is nothing to prove.
Thus suppose that $\k0\geq 2$.
As $$(-1)^{\k0-1}\sum_{j=0}^{\k0-1}(-y)^j  = \prod_{\ell=1}^{\k0-1}\left(y+e^{\frac{2\pi \il \ell}{\k0}}\right),$$ it is enough to prove that $\chi_y(\M)=0$ when $y=-e^{\frac{2\pi \il l}{\k0}}$, for all $\ell\in\{1,\ldots,\k0-1\}$.

If $\gcd(\ell,\k0)=1$ then Theorem \ref{special values}, together with the assumption that $\varphi_{\k0}(\M)=0$, imply that $\chi_y(\M)=0$ for $y=-e^{\frac{2\pi \il \ell}{\k0}}$.

If $\gcd(\ell,\k0)=a>1$ then it is sufficient to observe that, for $N:=\frac{\k0}{a}$ and $\ell':=\frac{\ell}{a}$, one has $e^{\frac{2\pi \il \ell}{\k0}}=e^{\frac{2\pi \il \ell'}{N}}$. Since by assumption
$\varphi_N(\M)$ vanishes, Theorem \ref{special values}  gives that $\chi_y(\M)=0$ for $y=-e^{\frac{2\pi \il \ell'}{N}}=-e^{\frac{2\pi \il \ell}{\k0}}$.
\end{proof}
In Subsection \ref{closer look}, we specialize to the case in which the manifold is symplectic and is endowed with a Hamiltonian circle action with isolated fixed points  to conclude that if index is maximal, namely $\k0=n+1$, it is sufficient to assume that the elliptic genus of level $n+1$ vanishes to conclude that $\chi(\M)=\sum_{j=0}^n (-y)^j$. We indeed prove that $\M$ is also
homotopy equivalent to $\C P^n$.

We are now ready to prove Corollary \ref{Betti toric}.
\begin{proof}[Proof of Corollary \ref{Betti toric}]\label{proof corollary}
Combining Propositions \ref{toric vanishing} and \ref{divisibility chiy genus} we obtain that for a symplectic toric manifold of index $\k0$, the polynomial $\sum_{j=0}^{\k0-1}(-y)^j$
divides $\chi_y(\M)$. For the claim \eqref{corollary toric} of Corollary \ref{Betti toric} what is left to prove is that, for a symplectic toric manifold of dimension $2n$, one has
\begin{equation}\label{chiy toric}
\chi_y(\M)=\sum_{j=0}^{n}b_{2j}(\M)(-y)^j\,,
\end{equation}
and then replacing $-y$ with $y$ yields the desired claim.
This fact follows from two observations: the first is that, picking a circle subgroup $S^1$ of the torus $\T$ acting on $\M$ such that the fixed points of the $S^1$-action are
the same of those of the $\T$ action,  gives $\chi_y(\M)=\sum_{j=0}^{n}N_{j}(-y)^j$, where $N_j$ is the number of fixed points with $j$ negative weights of the $S^1$-action (see for instance \cite[Section \ 5.7]{HBJ} and \cite[Section 3]{GS}); the second is that the number $N_j$ is precisely $b_{2j}(\M)$.

        If $\k0 = n+1$, then, as we are assuming that $\M$ is connected, $b_0(\M)=1$ and \eqref{corollary toric} implies that $\mathbf{b}=(1,1,\ldots,1)$. Since, as observed above,
        $N_j=b_{2j}(\M)$ for every $j$, we have that the total number of fixed points is $\sum_{j}N_j=\sum_{j}b_{2j}(\M)=n+1$. On a symplectic toric manifold the number of fixed
        points corresponds exactly to the vertices of the Delzant polytope $\psi(\M)$, which is of dimension $n$. However the only Delzant polytope of dimension $n$ with $n+1$ vertices
        is --up to $\GL_m(\Z)$-transformations-- the smooth simplex of dimension $n$. By Delzant's Theorem \cite{D} we obtain the desired claim.

        If $\k0 = n$, then \eqref{corollary toric} tells us that the polynomial $\sum_{j=0}^{n-1}(-y)^{j}$ divides $\chi_{y}(\M)$, i.e.
        \[
        \chi_{y}(\M) = (\ell(-y) + m)\sum_{j=0}^{n-1}(-y)^{j}.
        \]
        for some $\ell,m \in \Z$. Since $b_0(\M)=b_{2n}(\M)=1$, we obtain that $\ell=m=1$ and hence
        \[
        \chi_{y}(\M) = (-y + 1) \sum_{j=0}^{n-1}(-y)^{j} = 1+ 2(-y)+\cdots + 2(-y)^{n-1}+(-y)^{n}.
        \]
        From \eqref{chiy toric} it follows that $\mathbf{b}=(1,2,\ldots,2,1)$.

        If $\k0 = n-1$, then we obtain as before that
        \[
        \chi_{y}(\M) = (\ell(-y)^{2} + m(-y) + r) \sum_{j=0}^{n-2}(-y)^{j}
        \]
        for some $\ell,m,r\in \Z$, and from \eqref{chiy toric} and the fact that $b_{0}(\M) = b_{2n}(\M) = 1$ we infer $\ell = r= 1$. By multiplying out the above expression, we obtain the stated vector $\mathbf{b}$ of Betti numbers.

        If $\k0 = n-2$, we proceed as before and obtain $$\chi_{y}(\M) = (\ell(-y)^{3}+m(-y)^{2}+r(-y)+s) \sum_{j=0}^{n-3}(-y)^{j},$$
                for some $\ell, m, r, s\in\Z$.
                Since $b_0(\M)=b_{2n}(\M)=1$, \eqref{chiy toric} implies that
        $\ell=s=1$. Hence we get
        \begin{align*}
        \chi_y(\M) &= (-y)^{n} + (1+m)(-y)^{n-1} + (1+m+r)(-y)^{n-2} + (2+m+r)(-y)^{n-3}  + \dots \\
                        & \quad \dots + (2+m+r)(-y)^{3} + (1 + m+ r)(-y)^{2} + (1+r)(-y) + 1.
        \end{align*}
        The symmetry $b_{2}(\M) = b_{2n-2}(\M)$  further yields $m = r$ and the desired vector $\mathbf{b}$ of Betti numbers. Finally, replacing the variable $y$ with $-y$ finishes the proof.
    \end{proof}

\subsubsection{An application to smooth reflexive polytopes}\label{section reflexive}

We would like to give a straightforward application of Corollary \ref{Betti toric} to smooth (or Delzant) reflexive polytopes.
In Definition \ref{delzant} it is recalled what smooth (or Delzant) polytopes are. A particularly interesting subclass of such combinatorial objects is given by smooth reflexive polytopes. \
\begin{defin}\label{reflexive}
An $n$-dimensional polytope $\Delta$ is called {\it reflexive} if all its vertices belong to $\Z^n\subset \R^n$, it contains zero in its interior, and such that
\begin{equation*}\label{refl eq}
\Delta=\bigcap_{j=1}^k\{x\in \R^n : \langle x , \nu_j \rangle \leq 1\}\,,
\end{equation*}
where the $\nu_j\in \Z^n$ are the primitive outward normal vectors to the hyperplanes defining the facets, for $j\in\{1,\ldots,k\}$.
A {\it smooth reflexive} polytope is a polytope that is at the same time smooth (or Delzant) and reflexive.
\end{defin}

The importance of the smoothness condition relies in the fact that, via the Delzant Theorem, many combinatorial features of $\Delta$ have a topological counterpart in the corresponding manifold.
Hence it is often the case that one can use the topology of the manifold to derive combinatorial properties of the associated polytope.
For instance let $\mathbf{f}$ be the $f$-vector of $\Delta$, namely $\mathbf{f}=(f_0,\ldots,f_n)$, where $f_j$ is the number of faces of $\Delta$ of dimension $j$.
Then the $h$-vector of $\Delta$ is defined to be $\mathbf{h}=(h_0,\ldots,h_n)$, where
$$
h_j=\sum_{r=0}^j (-1)^{j-r}\binom{n-r}{n-j}f_{n-r}\quad\mbox{for all}\quad j\in\{0,\ldots,n\}.
$$
The following result is a first instance of the combinatorial--topological correspondence mentioned above; for a proof see for instance \cite[Lemma 3.8]{GHS} and the references therein.
.
\begin{lemma}\label{h=b}
Let $(\M,\omega,\psi)$ be a symplectic toric manifold of dimension $2n$ and $\Delta=\psi(\M)$. Then the $h$-vector of $\Delta$ is exactly the vector of even Betti numbers of $\M$, namely
$h_j=b_{2j}(\M)$ for every $j\in\{0,\ldots,n\}$.
\end{lemma}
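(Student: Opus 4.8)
The plan is to prove the identity by comparing two descriptions of the same integers: a Morse-theoretic one on the manifold $\M$ and a purely combinatorial one on the polytope $\Delta$, the bridge between them being the local normal form of a toric manifold near a fixed point. Throughout I will use that, by the Delzant correspondence, the vertices of $\Delta$ are precisely the images under $\psi$ of the fixed points of the $\T$-action.

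First I would pass to a circle. Choose a generic $\xi \in \ft$ generating a circle subgroup $S^1 \subset \T$, so that $S^1$ has the same fixed point set as $\T$ and $\psi^\xi = \langle \psi,\xi\rangle$ is a Morse function. Its critical points are the fixed points and all its Morse indices are even, so $\psi^\xi$ is perfect and, as recalled in Subsection~\ref{generalities s1 action}, one has $b_{2j}(\M) = N_j$, where $N_j$ is the number of fixed points with exactly $j$ negative weights. It remains to read $N_j$ off the polytope. At the fixed point $P$ lying over a vertex $v$, the local normal form for toric manifolds identifies the isotropy weights with the primitive integral edge vectors $\beta_1,\dots,\beta_n \in \ts$ of $\Delta$ emanating from $v$ (a $\Z$-basis by condition (D3) of Definition~\ref{delzant}), and the weight of the restricted $S^1$-action in the $i$-th direction is $\langle \beta_i,\xi\rangle$. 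Hence the number of negative weights at $P$ equals the number $i(v)$ of edges at $v$ along which $\psi^\xi$ decreases, the \emph{down-edges}, so that $b_{2j}(\M) = \#\{v : i(v) = j\}$.

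Next I would compute the $h$-vector combinatorially. Since $\Delta$ is simple, the $k$-dimensional faces containing a vertex $v$ are in bijection with the $k$-element subsets of the $n$ edges at $v$, and $v$ is the $\psi^\xi$-maximum of such a face exactly when all $k$ chosen edges are down-edges. Genericity of $\xi$ guarantees a unique maximizing vertex on each face, so counting faces by their top vertex gives $f_k = \sum_v \binom{i(v)}{k}$. Feeding this into the definition of $h_j$ — equivalently using the generating identity $\sum_{j} h_j t^{j} = \sum_{k} f_k\, t^{\,n-k}(1-t)^{k}$ which follows from it — collapses the sum via $\sum_k \binom{i(v)}{k} t^{\,n-k}(1-t)^k = t^{\,n-i(v)}$ to $\sum_j h_j t^j = \sum_v t^{\,n-i(v)}$, that is $h_j = \#\{v : i(v) = n-j\}$. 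Comparing with the previous paragraph and invoking Poincar\'e duality $b_{2j}(\M) = b_{2(n-j)}(\M)$ (equivalently the palindromic symmetry $\#\{v : i(v)=j\} = \#\{v : i(v)=n-j\}$) finally yields $h_j = b_{2j}(\M)$.

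The main obstacle is the bridge, namely the careful verification that the sign of each weight $\langle \beta_i,\xi\rangle$ records whether the corresponding edge of $\Delta$ goes up or down; this rests on the equivariant Darboux theorem and on the fact that the edges of the moment polytope are exactly the moment images of the invariant $2$-spheres, whose isotropy weights are the $\beta_i$. One must also track sign conventions (the orientation of $\xi$ and whether edges are taken to point into or out of $v$), but these merely interchange $i(v)$ with $n-i(v)$ and are harmless since both sides of the final identity are palindromic. The combinatorial step $f_k = \sum_v \binom{i(v)}{k}$ is the standard sweep/shelling count, and is where simplicity of $\Delta$ is essential.
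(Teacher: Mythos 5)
Your proof is correct. Note that the paper offers no proof of this lemma at all — it defers to \cite[Lemma 3.8]{GHS} — and your argument (a generic circle subgroup making $\psi^\xi$ a perfect Morse function with $b_{2j}(\M)=N_j$, the identification of weights with primitive edge vectors, the count $f_k=\sum_v\binom{i(v)}{k}$ of faces by their maximizing vertex, and the generating identity $\sum_j h_jt^j=\sum_k f_k t^{n-k}(1-t)^k$, which indeed is equivalent to the paper's definition of $h_j$) is exactly the standard sweep argument underlying that citation, so the two approaches coincide.
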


Given a smooth polytope $\Delta$ and the corresponding symplectic toric manifold $(\M,\omega,\psi)$, with $\psi(\M)=\Delta$, the reflexivity condition on $\Delta$
corresponds exactly to the so-called monotonicity\footnote{The monotonicity condition is sometimes also referred to as the \emph{symplectic Fano condition}, and more
generally it means $c_1=\lambda[\omega]$ for some $\lambda \in \R$. However one can prove (see for instance \cite[Lemma 5.2]{GHS}) that if $(\M,\omega)$ admits
a Hamiltonian action, then $\lambda$ must be positive. Hence for symplectic toric manifolds satisfying $c_1=\lambda[\omega]$ one can rescale the symplectic form
to obtain $c_1=[\omega]$.} of $\M$, namely $c_1=[\omega]$ (see for instance \cite[Proposition 3.10]{GHS}). This allows one to translate even more properties of $\M$
into those of $\Delta$, as the following lemma illustrates. Before stating it, we recall that given a rational segment $e=(v_1,v_2)$ between two points $v_1,v_2$ in $\R^n$,
namely
\begin{equation}\label{rational segment}
v_2-v_1=l(e)w\quad\text{for some}\;\; l(e)\in \R^+\;\;\text{and}\;\;w\in \Z^n\,,
\end{equation}
its {\it affine length} is the positive number $l(e)$ defined by the above displayed equation if $w$ is the unique primitive vector in $\Z^n$ satisfying \eqref{rational segment}.
We recall that given a symplectic toric manifold $(\M,\omega,\psi)$, the edges of $\psi(\M)$ correspond, via the moment map, to symplectic spheres in $\M$. As the set of these spheres
generate $H_2(\M;\Z)$, the monotonicity condition $c_1=[\omega]$ has this important consequence (see \cite[Proposition 5.4]{GHS})
\begin{lemma}
Let $\Delta$ be a smooth reflexive polytope and $(\M,\omega,\psi)$ the corresponding monotone symplectic toric manifold of index $\k0$. Then we have
$$
\k0 = \gcd\{l(e): e\in E\},
$$
where $E$ is the set of edges of $\Delta$.
\end{lemma}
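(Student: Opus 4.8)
The plan is to express $\k0$ as a greatest common divisor of the values of $c_1$ on a generating set of $H_2(\M;\Z)$, and then to identify each such value with an affine length by means of the monotonicity relation. First I would record that a symplectic toric manifold has isolated fixed points, namely the preimages under $\psi$ of the vertices of $\Delta$; restricting the $\T$-action to a generic subcircle then yields a Hamiltonian $S^1$-action with the same (isolated) fixed points. By Remark \ref{properties index}(3) this shows that $\M$ is simply connected and that $\k0$ coincides with the minimal Chern number, i.e.\ with the positive generator $D$ of $\langle c_1,\pi_2(\M)\rangle = D\Z$. Simple connectivity and the Hurewicz theorem give $\pi_2(\M)\simeq H_2(\M;\Z)$, so that
\[
\k0 = \gcd\{\langle c_1,A\rangle : A\in H_2(\M;\Z)\},
\]
the positive generator of the image of the homomorphism $c_1\colon H_2(\M;\Z)\to\Z$.

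Next I would feed in the two structural facts recalled just before the statement: the sphere classes $\{[S_e]:e\in E\}$ generate $H_2(\M;\Z)$, and reflexivity of $\Delta$ forces the monotonicity relation $c_1=[\omega]$. Since these classes generate, the image of $c_1$ is generated by the integers $\langle c_1,[S_e]\rangle$, whence $\k0=\gcd\{\langle c_1,[S_e]\rangle:e\in E\}$. Applying $c_1=[\omega]$ then turns each of these integers into a symplectic area, $\langle c_1,[S_e]\rangle=\int_{S_e}\omega$.

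It remains to prove the identity $\int_{S_e}\omega=l(e)$, which is the step I expect to require the most care. Here I would argue locally along the edge $e=(v_1,v_2)$, writing $v_2-v_1=l(e)\,u$ with $u$ primitive. The Delzant smoothness condition guarantees that the weights of the $\T$-action at the fixed point over $v_1$ are exactly the primitive edge directions issuing from $v_1$, one of which is $u$; in particular $u$ is the weight on the tangent line to $S_e$ at that fixed point. Choosing an integral $\xi$ in the cocharacter lattice of $\T$ with $\langle u,\xi\rangle=1$ (possible since $u$ is primitive), the circle $S^1_\xi$ acts on $S_e\simeq\C P^1$ with weight $+1$ at the fixed point over $v_1$, and the elementary $\C P^1$ computation then gives
\[
\int_{S_e}\omega = \psi^\xi(v_2)-\psi^\xi(v_1) = \langle v_2-v_1,\xi\rangle = l(e)\langle u,\xi\rangle = l(e).
\]
Combining this with the previous paragraph yields $\k0=\gcd\{l(e):e\in E\}$, as claimed. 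The delicate points are the normalization conventions tying together the weight lattice, the moment map and the symplectic area (so that a weight-one action on $\C P^1$ satisfies $\int_{S_e}\omega=\psi^\xi(v_2)-\psi^\xi(v_1)$ exactly), together with the identification of the weights of the torus action at a vertex with the primitive outgoing edge directions.
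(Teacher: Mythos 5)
Your proof is correct, and it follows the route the paper itself indicates: the paper does not prove this lemma but recalls it with a citation to \cite[Proposition 5.4]{GHS}, after stating precisely the facts you use — that the spheres $S_e$ over the edges generate $H_2(\M;\Z)$ and that reflexivity gives the monotonicity relation $c_1=[\omega]$. The two details you add beyond that outline are also sound: reducing the index to the minimal Chern number via Remark \ref{properties index}(3) (legitimate, since a toric manifold admits a generic Hamiltonian subcircle with isolated fixed points) correctly identifies $\k0$ with the gcd of the values of $c_1$ on a generating set of $H_2(\M;\Z)$, and the local computation $\int_{S_e}\omega = l(e)$ is valid under the standard toric normalization, where the weights at a fixed point are the primitive outgoing edge directions and $\iota_{\xi^\#}\omega = d\psi^\xi$.
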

Roughly speaking, for a monotone symplectic toric manifold the index becomes ``visible''  in the associated polytope. These two of known lemmas, together with the Delzant correspondence and Corollary \ref{Betti toric}, are all of the necessary ingredients for the proof of the following.
\begin{corollary}\label{reflexive h vector}
Let $\Delta$ be a smooth reflexive polytope of dimension $n$ with $h$-vector $\mathbf{h}=(h_0,\ldots,h_n)$. Define $\k0$ to be the great common divisor
of the affine lengths of its edges. Then
\begin{equation*}\label{corollary reflexive}
\sum_{j=0}^{\k0-1}y^j \quad \text{divides} \quad \sum_{j=0}^n h_{j}y^j.
\end{equation*}
\end{corollary}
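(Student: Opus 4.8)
The plan is to realize the purely combinatorial data attached to $\Delta$ geometrically and then quote Corollary~\ref{Betti toric}. First I would invoke Delzant's Theorem \cite{D} to produce, from the smooth polytope $\Delta$, a symplectic toric manifold $(\M,\omega,\psi)$ of dimension $2n$ with $\psi(\M)=\Delta$. The hypothesis that $\Delta$ is \emph{reflexive} is exactly what makes this manifold monotone: by the reflexivity--monotonicity dictionary recorded above (see \cite[Proposition~3.10]{GHS}), one has $c_1=[\omega]$.

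The key point is then to reconcile the two a priori different meanings of $\k0$. In the statement of the corollary $\k0$ is defined combinatorially as $\gcd\{l(e):e\in E\}$, whereas Corollary~\ref{Betti toric} refers to the index of the underlying manifold. These two quantities are identified precisely by the lemma immediately preceding the corollary: for the monotone toric manifold associated to a smooth reflexive polytope, the index equals $\gcd\{l(e):e\in E\}$. Hence the combinatorially defined $\k0$ coincides with the index of $\M$.

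Finally I would translate the conclusion of Corollary~\ref{Betti toric} back into combinatorial language using Lemma~\ref{h=b}, which asserts that $h_j=b_{2j}(\M)$ for all $j\in\{0,\dots,n\}$. Thus $\sum_{j=0}^n h_j y^j=\sum_{j=0}^n b_{2j}(\M)y^j$, and Corollary~\ref{Betti toric} gives that $\sum_{j=0}^{\k0-1}y^j$ divides the latter polynomial, hence the former, which is the desired conclusion.

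I do not expect a genuine obstacle here: all the analytic and geometric depth resides in Corollary~\ref{Betti toric} (and, behind it, the rigidity theorem), while the remaining content is merely the dictionary between the polytope and its Delzant manifold. The only step requiring care is verifying that \emph{reflexivity}, and not mere smoothness, is what is used: it is precisely what guarantees monotonicity, and therefore the identification of the index with the greatest common divisor of the affine edge lengths via the preceding lemma.
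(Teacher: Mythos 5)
Your proposal is correct and follows exactly the route the paper intends: Delzant's correspondence to realize $\Delta$ as a symplectic toric manifold, reflexivity giving monotonicity via \cite[Proposition~3.10]{GHS}, the preceding lemma identifying the index with $\gcd\{l(e):e\in E\}$, Lemma~\ref{h=b} translating Betti numbers into the $h$-vector, and finally Corollary~\ref{Betti toric}. Your emphasis that reflexivity (not mere smoothness) is what makes the index--$\gcd$ identification work is precisely the point the paper flags as well.
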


Note that Corollary \ref{reflexive h vector} is a combinatorial result concerning reflexive polytope.
However its proof requires essentially two deep facts: the topological--combinatorial correspondence between monotone symplectic toric manifolds and smooth reflexive polytopes,
and the rigidity theorem of elliptic genera, which is the key ingredient for the proof of Corollary \ref{Betti toric}. Thus the following natural question arises:
\begin{question}\label{question cor}
Is there a different, possibly entirely combinatorial proof of Corollary \ref{reflexive h vector}?
\end{question}

\subsection{From the index to the Betti numbers: A closer look}\label{closer look}
In this subsection we prove Theorem \ref{main geometry}. Before specializing to the case in which the index is maximal,
we introduce some polynomials, together with their properties, that generalize the Hilbert polynomial
of an almost complex manifold.

Let $(\M^{2n}, \J)$ be a compact almost complex manifold of dimension $2n$ with non-zero first Chern class $c_1$. Let $N$ be a non-zero integer dividing $c_1$, namely $\frac{c_1}{N}\in H^2(\M;\Z)$.
Let $K=\wedge^nT^*$ and  $L=K^{\frac 1N}$, hence $c_1(L)=-\frac{c_1}{N}$. The \emph{Hilbert polynomial} of $(\M,\J)$ (see for instance \cite{S}) is the polynomial $H$ whose values at an integer $k$ are given by
$$
H(k):= \ind \left(L^k\right).
$$
Similarly, for every integer $k$ and $m>0$, we define
\begin{equation}\label{Hmk}
H_m(k):=\ind\left(L^k\otimes \wedge^m T^*\right).
\end{equation}
By the Atiyah-Singer Theorem it is easy to see that $H_m(k)$ depends on $k$ in a polynomial way, and the polynomial $H_m(x)$, with $x\in \C$ is defined to be the unique polynomial
whose values at an integer $k$ are given by \eqref{Hmk}.

The following proposition is a generalization of \cite[Proposition\ 4.1 (2)]{S}.

\begin{prop}\label{symmetries H}
Let $(\M^{2n}, \J, S^1)$ be a compact almost complex manifold which is acted on by a circle $S^1$ which preserves $\J$ and with discrete fixed point set $\M^{S^1}$.
Assume that the first Chern class $c_1$ is non-zero. Let
 $N$ be a non-zero integer and $L$  the line bundle $(\wedge^nT^*)^{\frac 1N}$. Then
 for every integer $k$ and $0\leq m \leq n$
\begin{equation}\label{eq symmetry}
\ind\left(L^{k}\otimes \wedge^mT^*\right)=(-1)^n\ind \left(L^{-k}
\otimes\wedge^{n-m}T^*\right)=(-1)^n\ind\left(L^{N-k}\otimes \wedge^m T\right).
\end{equation}
Hence for every $x\in \C$ we obtain
\begin{equation}\label{symmetry H}
H_m(x)=(-1)^n H_{n-m}(-x).
\end{equation}
\end{prop}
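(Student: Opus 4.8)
The plan is to compute all three indices by the localization formula for the equivariant index (Lemma~\ref{localization K}) and to read off the symmetries from a single substitution $t\mapsto t^{-1}$, supplemented by one tautological bundle isomorphism. First I would record the restrictions of the relevant bundles to a fixed point $P\in\M^{S^1}$. Writing $\sigma(P):=w_1(P)+\cdots+w_n(P)$, the canonical bundle $K=\wedge^nT^*$ restricts to the weight $t^{-\sigma(P)}$, so its $N$-th root $L$ restricts to $t^{-\sigma(P)/N}$ and $L^k$ to $t^{-k\sigma(P)/N}$; meanwhile $\wedge^mT^*$ restricts to $e_m\!\left(t^{-w_1(P)},\dots,t^{-w_n(P)}\right)$ and $\wedge^mT$ to $e_m\!\left(t^{w_1(P)},\dots,t^{w_n(P)}\right)$, where $e_m$ is the $m$-th elementary symmetric polynomial. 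Hence \eqref{index K} gives
\[
\ind_{S^1}\!\left(L^k\otimes\wedge^mT^*\right)=\sum_{P\in\M^{S^1}}\frac{t^{-k\sigma(P)/N}\,e_m\!\left(t^{-w_1(P)},\dots,t^{-w_n(P)}\right)}{\prod_{j=1}^n\left(1-t^{-w_j(P)}\right)}.
\]

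The core of the argument is to substitute $t\mapsto t^{-1}$ in this expression and simplify using the two elementary identities $1-t^{w}=-t^{w}\left(1-t^{-w}\right)$ and $e_m(y_1,\dots,y_n)=(y_1\cdots y_n)\,e_{n-m}\!\left(y_1^{-1},\dots,y_n^{-1}\right)$. Applying the first to each denominator factor produces a global sign $(-1)^n$ and a factor $t^{-\sigma(P)}$, while the second (with $y_j=t^{w_j(P)}$) turns $e_m\!\left(t^{w_j(P)}\right)$ into $t^{\sigma(P)}e_{n-m}\!\left(t^{-w_j(P)}\right)$; the factors $t^{\pm\sigma(P)}$ cancel and the exponent of $L$ flips sign. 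I therefore expect to land on the clean relation
\[
\ind_{S^1}\!\left(L^k\otimes\wedge^mT^*\right)\big|_{t\mapsto t^{-1}}=(-1)^n\,\ind_{S^1}\!\left(L^{-k}\otimes\wedge^{n-m}T^*\right).
\]
Since $t=1$ is fixed by $t\mapsto t^{-1}$, passing to the non-equivariant index via \eqref{index} yields the first equality in \eqref{eq symmetry}. The second equality is then purely formal: the standard isomorphism $\wedge^{n-m}T^*\cong\wedge^mT\otimes K$ combined with $K=L^N$ gives a genuine bundle isomorphism $L^{-k}\otimes\wedge^{n-m}T^*\cong L^{N-k}\otimes\wedge^mT$, so those two indices coincide on the nose and no further input is needed.

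To pass from integers to polynomials, note that by \eqref{Hmk} one has $H_m(k)=\ind\!\left(L^k\otimes\wedge^mT^*\right)$ and $H_{n-m}(-k)=\ind\!\left(L^{-k}\otimes\wedge^{n-m}T^*\right)$, so \eqref{eq symmetry} reads $H_m(k)=(-1)^nH_{n-m}(-k)$ for every integer $k$. As $H_m(x)$ and $(-1)^nH_{n-m}(-x)$ are polynomials agreeing at infinitely many points, they agree identically, which is \eqref{symmetry H}.

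The main obstacle is that the weight $-\sigma(P)/N$ of $L$ at $P$ need not be an integer, so $L$ may fail to carry an honest $S^1$-equivariant structure and the localization formula does not literally apply. I would resolve this by passing to the $N$-fold cover $\tilde S^1\to S^1$, $\tilde\lambda\mapsto\tilde\lambda^N$: the action on $\M$ has the same isolated fixed points, each weight $w_j(P)$ is replaced by $Nw_j(P)$, and $L$ now acquires a genuine equivariant structure of integer weight $-\sigma(P)$ at $P$. All displayed computations then take place in $\Z[\tilde t,\tilde t^{-1}]$ with $\tilde t=t^{1/N}$, and the non-equivariant limit $\tilde t\to1$ is unaffected. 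That this bookkeeping is consistent across the fixed points is exactly the $N$-balancedness guaranteed by Proposition~\ref{balanced index} (applicable since $N\mid\k0$), and this is the only place where the divisibility hypothesis enters.
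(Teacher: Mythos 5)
Your core computation is correct and is essentially the paper's own proof: what the paper phrases as reversing the orientation of the circle (computing $\ind_{\widetilde{S}^1}$, i.e.\ replacing every weight $w_j(P)$ by $-w_j(P)$) is literally your substitution $t\mapsto t^{-1}$, and the two elementary identities you invoke are exactly the manipulations carried out there; the interpolation argument for \eqref{symmetry H} is also identical. Your derivation of the \emph{second} equality in \eqref{eq symmetry} from the tautological isomorphism $\wedge^{n-m}T^*\cong\wedge^{m}T\otimes K$ with $K=L^N$ is a genuine (and clean) variation: the paper instead keeps manipulating the localized sum to recognize $(-1)^n\ind_{S^1}\left(L^{N-k}\otimes\wedge^m T\right)$ equivariantly, which forces the equivariant structures on $L$ and $T$ to be compatibly normalized, whereas your isomorphism is used non-equivariantly and needs no such normalization.

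The gap is in your final paragraph. Passing to the $N$-fold cover does make the putative weights $-\sigma(P)$ integral, but integrality is not the point that needs proof: to apply \eqref{index K} you need an honest $\widetilde{S}^1$-equivariant line bundle structure on $L$, i.e.\ a global lift of the (lifted) circle action to the total space of $L$, and Proposition \ref{balanced index} does not produce one. $N$-balancedness is only the fixed-point shadow of such a lift: it is precisely the condition that \emph{some} integer-valued assignment of the form $P\mapsto \bigl(-\sigma(P)+c\bigr)/N$ exists, a necessary condition, and it says nothing about whether a global equivariant structure realizing these weights exists. Existence of equivariant structures on line bundles is a global question --- by Hattori--Yoshida \cite{HY} it is equivalent to $c_1(L)$ lying in the image of $H^2_{S^1}(\M;\Z)\to H^2(\M;\Z)$ --- and it cannot be decided by fixed-point data alone. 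The paper fills exactly this hole by citing \cite[Lemma 3.2]{Ha} for the existence of some equivariant extension $L_{S^1}$, and then uses the non-uniqueness of extensions ($L_{S^1}\otimes L_b$, which shifts all weights by a common integer $b$) together with the $N$-fold cover to arrange, at the level of equivariant first Chern classes, that the restriction to every fixed point is exactly $-W(P)/N$ times the generator. Your argument becomes complete once you replace the appeal to balancedness by this existence statement; after that, as you implicitly use, the normalization of the weights is automatic, since any two equivariant structures differ by a constant integer weight and the constant is forced to be divisible by $N$ by integrality.
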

\begin{proof}
In order to prove \eqref{eq symmetry} we use an equivariant extension of the bundles involved. Indeed, \cite[Lemma 3.2]{Ha} implies that the line bundle $L$ has an equivariant extension, which is called $L_{S^1}$. The exterior powers of the tangent and cotangent bundles naturally inherit an $S^1$-action from that on $\M$.

As $c_1=-N\cdot c_1(L)$, the equivariant extensions of the Chern classes satisfy $c_1^{S^1}=-N\cdot c_1^{S^1}(L_{S^1})+a\,x$ for some $a\in \Z$, where $x$ is the degree two generator of $H^*_{S^1}(pt;\Z)$. We want to prove that, for the sake of this proof, the constant $a$ can be set to zero. Indeed first of all observe that the equivariant extension $L_{S^1}$ is not unique, as $L_{S^1}\otimes L_b$ is another such extension, where $L_b$ is the trivial line bundle
endowed with fiber-wise action given by $\lambda \cdot z = \lambda^b z$, for $b\in \Z$. At the level of Chern classes this means that, after choosing an equivariant extension $L_{S^1}$
with Chern class $c_1^{S^1}(L_{S^1})$, the Chern classes of all the other possible equivariant extensions are given by $c_1^{S^1}(L_{S^1})+b\,x$, with $b\in \Z$. Therefore
\begin{equation}\label{c1S1}
c_1^{S^1}=-N\left(c_1^{S^1}(L_{S^1})+b\,x\right)+a\,x.
\end{equation}
 By precomposing the $S^1$-action with the map from $S^1$ to $S^1$ that sends $\lambda$ to $\lambda^N$ we obtain a new action where all the weights at the fixed points get multiplied by $N$. Restricting \eqref{c1S1} to the fixed points, we obtain $\sum_{j=1}^n w_j(P)\,x=-N(c_1^{S^1}(L_{S^1})_{|_P}+b\,x)+a\,x$, where now $a$ is divisible by $N$. Hence we can choose
 a different equivariant extension of $L_{S^1}$ with $b=\frac{a}{N}$. Observe that the operations of choosing a different equivariant extension and of lifting the action change the equivariant index, but not the non equivariant one. Thus in what follows we can assume that
 \begin{equation}\label{relation c1 and L}
 \sum_{j=1}^n w_j(P)\,x=-N\cdot c_1^{S^1}(L_{S^1})|_{_P}.
 \end{equation}

Denote the sum of the weights $w_1(P)+\ldots+w_n(P)$ at a fixed point $P$ by $W(P)$. Then the localization theorem in equivariant K-theory \eqref{index K} implies that the equivariant index is equal
to
$$
\ind_{S^1}\left(L^{k}\otimes \wedge^mT^*\right)=\sum_{P\in \M^{S^1}}
\dfrac{t^{-k\frac{W(P)}{N}}e_m\left(t^{-w_1(P)}, \ldots,
t^{-w_n(P)}\right)}{\prod_{j=1}^n\left(1-t^{-w_j(P)}\right)},
$$
where $e_j$ denotes the $j$-th elementary symmetric polynomial in
$n$-variables.
Let $\widetilde{S}^1$ denote the circle $S^1$ with reversed orientation.
Then $\widetilde{S}^1$ acts on $M$ with weights at each fixed point
$P$ given by $-w_1(P),\ldots,-w_n(P)$. Hence
\begin{align*}
\ind_{\widetilde{S}^1}\left(L^{k}\otimes \wedge^mT^*\right)&=\sum_{P\in \M^{S^1}}
\dfrac{t^{k\frac{W(P)}{N}}e_m\left(t^{w_1(P)}, \ldots,
t^{w_n(P)}\right)}{\prod_{j=1}^n\left(1-t^{w_j(P)}\right)}\\
&=(-1)^n\sum_{P\in \M^{S^1}} \dfrac{t^{k\frac{W(P)}{N}}e_m\left(t^{w_1(P)},
\ldots,
t^{w_n(P)}\right)}{t^{W(P)}\prod_{j=1}^n(1-t^{-w_j(P)})}\\
&=(-1)^n\sum_{P\in \M^{S^1}} \dfrac{t^{(k-N)\frac{W(P)}{N}}e_m\left(t^{w_1(P)}, \ldots, t^{w_n(P)}\right)}{\prod_{j=1}^n\left(1-t^{-w_j(P)}\right)}\numberthis \label{sym2}\\
&=(-1)^n\ind_{S^1}\left(L^{N-k}\otimes \wedge^m T\right).
\end{align*}
Moreover, noting that $t^{-W(P)}e_m(t^{w_1(P)},\ldots,t^{w_n(P)})=e_{n-m}(t^{-w_1(P)},\ldots,t^{-w_n(P)})$ the expression in \eqref{sym2} also equals
\begin{align*}
&(-1)^n\sum_{P\in \M^{S^1}}
\dfrac{t^{k\frac{W(P)}{N}}e_{n-m}\left(t^{-w_1(P)}, \ldots,
t^{-w_n(P)}\right)}{\prod_{j=1}^n\left(1-t^{-w_j(P)}\right)}=(-1)^n\ind_{S^1}\left(L^{-k}
\otimes\wedge^{n-m}T^*\right).\\
\end{align*}
As observed in \eqref{index} the equalities in \eqref{eq symmetry} are obtained by forgetting the circle actions, namely by taking $t\to 1$.

>From the definition of $H_m(x)$ and \eqref{eq symmetry} it follows that $H_m(k)=(-1)^n H_{n-m}(-k)$ for every integer $k$. This implies \eqref{symmetry H}, as the polynomial
$H_m(x)-(-1)^n H_{n-m}(-x)$ has infinitely many zeros.
\end{proof}

As an example we compute these polynomials for the complex projective space.

\begin{prop}\label{projective space H}
The Hilbert polynomial $H_m(x)$ of the projective space
$\mathbb{CP}^n$ is given by
    $$
    H_m(x)=\dfrac{(-1)^n}{m!(n-m)!}(x-1)\cdot \ldots \cdot (x-(n-m))\cdot(x+1)\cdot \ldots \cdot(x+m).
    $$
    In particular,
    $$
    H_m(0)=\ind(\wedge^mT^*)=(-1)^m
    $$
    and
    $$
    \chi_y=1-y+y^2+\ldots+(-1)^ny^n.
    $$
\end{prop}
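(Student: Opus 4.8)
The plan is to realize $H_m(x)$ as a polynomial of degree $n$ and to pin it down from its leading coefficient together with its $n$ roots. First I would identify the relevant line bundle. Since $c_1(\C P^n)=(n+1)h$ with $h\in H^2(\C P^n;\Z)$ the positive generator, the index is $\k0=n+1$; taking $N=\k0=n+1$ gives $c_1(L)=-\tfrac{c_1}{N}=-h$, so that $L=\mathcal{O}(-1)$ is the tautological bundle and $L^k=\mathcal{O}(-k)$. Hence $L^k\otimes\wedge^m T^*=\wedge^m T^*\otimes\mathcal{O}(-k)$ and, by the Atiyah--Singer theorem (equivalently Hirzebruch--Riemann--Roch), $H_m(k)=\ind(\wedge^m T^*\otimes\mathcal{O}(-k))=\chi(\C P^n,\wedge^m T^*(-k))$ is the holomorphic Euler characteristic, which depends polynomially on $k$.

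I would next read off the leading coefficient from $H_m(x)=\int_{\C P^n}e^{-xh}\,\ch(\wedge^m T^*)\,\td(\C P^n)$. Expanding $e^{-xh}=\sum_{d\ge 0}\tfrac{(-x)^d}{d!}h^d$ shows that $H_m$ has degree at most $n$ in $x$, and the coefficient of $x^n$ receives a contribution only from the term $\tfrac{(-1)^n}{n!}h^n$ paired with the degree-zero parts $\rank(\wedge^m T^*)=\binom{n}{m}$ and $\td_0=1$. Since $\int_{\C P^n}h^n=1$, this leading coefficient equals $\tfrac{(-1)^n}{n!}\binom{n}{m}=\tfrac{(-1)^n}{m!(n-m)!}$, exactly the constant appearing in the claim.

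It then remains to locate the roots, and this is the step I expect to be the real content. I would invoke Bott's classical computation of $H^q(\C P^n,\wedge^m T^*(j))$, which vanishes in every degree $q$ for all integers $j$ with $m-n\le j\le m$ and $j\neq 0$; in particular $\chi(\C P^n,\wedge^m T^*(j))=0$ for such $j$. Substituting $j=-k$ gives $H_m(k)=0$ for $k\in\{1,\dots,n-m\}$ and for $k\in\{-1,\dots,-m\}$, i.e.\ the $n$ distinct roots $x=1,\dots,n-m$ and $x=-1,\dots,-m$. (One can cut the work in half using the symmetry $H_m(x)=(-1)^nH_{n-m}(-x)$ of \eqref{symmetry H}, which turns the positive roots into the negative ones; alternatively, these vanishings can be produced directly from the localization formula \eqref{index} applied to the standard circle action on $\C P^n$, whose $n+1$ fixed points and weights are explicit.) A degree-$n$ polynomial is determined by its leading coefficient and its $n$ roots, so
\[
H_m(x)=\frac{(-1)^n}{m!(n-m)!}(x-1)\cdots(x-(n-m))\,(x+1)\cdots(x+m),
\]
as asserted.

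Finally, substituting $x=0$ gives $H_m(0)=\tfrac{(-1)^n}{m!(n-m)!}\cdot(-1)^{n-m}(n-m)!\cdot m!=(-1)^m$, and since $H_m(0)=\ind(\wedge^m T^*)$ this is the stated special value. For the $\chi_y$-genus I would use that, by construction of the $\chi_y$-genus (Subsection \ref{genera}), the coefficient of $y^p$ is $\ind(\wedge^p T^*)=H_p(0)=(-1)^p$, whence $\chi_y(\C P^n)=\sum_{p=0}^n(-1)^py^p=1-y+y^2+\cdots+(-1)^ny^n$. The leading-coefficient and special-value computations are routine; the genuine obstacle is the vanishing of $\chi(\C P^n,\wedge^m T^*(j))$ throughout the middle range, which rests on Bott's formula (or the explicit localization calculation).
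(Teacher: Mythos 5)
Your proof is correct, but it follows a genuinely different route from the paper's. The paper works equivariantly: it takes the standard circle action on $\C P^n$ (weights chosen with $w_1+\cdots+w_n=0$), writes $\ind_{S^1}(L^k\otimes\wedge^m T^*)$ via the Atiyah--Segal localization formula, uses an identity among elementary symmetric polynomials to express this in terms of the equivariant indices $\ind_{S^1}(L^{k+p})$, lets $t\to 1$, and then combines the known $H_0(x)=\frac{(-1)^n}{n!}(x-1)\cdots(x-n)$ from \cite{S} with a binomial-coefficient identity proved by induction on $m$ to reach the closed form. You instead pin down $H_m$ as a polynomial of degree exactly $n$: Hirzebruch--Riemann--Roch applied to $H_m(x)=\int_{\C P^n}e^{-xh}\operatorname{ch}(\wedge^m T^*)\operatorname{Todd}(\C P^n)$ gives the leading coefficient $\frac{(-1)^n}{m!(n-m)!}$, and Bott's vanishing theorem for $H^q(\C P^n,\Omega^m(j))$ (all $q$, $m-n\le j\le m$, $j\neq 0$) supplies the $n$ distinct roots $\{1,\dots,n-m\}\cup\{-1,\dots,-m\}$; both computations check out, as do the evaluations $H_m(0)=(-1)^m$ and $\chi_y=\sum_p(-1)^p y^p$ (the coefficient of $y^p$ in $\chi_y$ being $\ind(\wedge^p T^*)$, exactly as the paper also uses). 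The trade-off: your argument is shorter, avoids the symmetric-function manipulation and the inductive claim, and explains transparently \emph{why} the answer factors with precisely those integer roots (the phenomenon the paper highlights in the figure of zeros following the proposition); but it imports Bott's formula, a sheaf-cohomological fact about the holomorphic category, whereas the paper's proof stays entirely inside the equivariant index-theoretic toolkit it has already built (localization in equivariant $K$-theory plus the $H_0$ computation of \cite{S}) and so fits the almost-complex framework of the rest of the section without external input. Your parenthetical alternative --- deriving half the roots from the symmetry $H_m(x)=(-1)^nH_{n-m}(-x)$ of Proposition \ref{symmetries H}, or producing the vanishings by direct localization at the $n+1$ fixed points --- would remove that dependence and bring your argument closer in spirit to the paper's.
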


\begin{proof}
Consider the $S^1$-action on $\mathbb{CP}^n$ given by
$$
\lambda \cdot \left[z_0: z_1: \ldots: z_n\right]=\left[z_0: \lambda^{w_1}z_1:
\ldots: \lambda^{w_n}z_n\right]\,,
$$
where $w_1, \ldots, w_n$ are distinct, non-zero integers.

This $S^1$-action is the restriction to a circle of the standard
toric action of the $n$-dimensional torus $T^n$ on
$\mathbb{CP}^n\,.$ The circle action has $P_0:=\{[1:0:\ldots:0],
P_1:=[0:1:\ldots:0], \ldots, P_n:=[0:0:\ldots:1]\}$ as a set of
fixed points. The weights of the circle action are given at $P_0$ by
$\{w_k\}_{k=1}^n$ and at $P_j$ by
$\{-w_j+(1-\delta_{jk})w_k\}_{k=1}^n$ for $j\in\{1, \ldots, n\}.$

Let $L$ be the line bundle such that $c_1(L)=-\frac{c_1}{n+1}.$
We argue as in the proof of Proposition \ref{symmetries H} and consider
an equivariant extension of $L$ satisfying \eqref{relation c1 and L}.
Thus in this case it is easy to check that
$L^k_{S^1}(P_0)=t^{-k\frac{w_1+\ldots+w_n}{n+1}}$ and
$L_{S^1}^k(P_j)=t^{-k\frac{w_1+\ldots+w_n}{n+1}}t^{k w_j}$, for all $j\in\{1,\ldots,n\}$.
Since the integers $w_1,\ldots,w_n$ can be chosen arbitrarily as long as they are
all distinct and non-zero (as otherwise the action would not have isolated fixed points)
we choose them such that $w_1+\cdots +w_n=0$.
Then the localization formula for the computation of the index yields
\begin{align*}
\ind_{S^1}\left(L_{S^1}^k\otimes \wedge^mT^*\right)=&\dfrac{e_m(t^{-w_1}, \ldots,
t^{-w_n})}{\prod_{j=1}^n(1-t^{-w_j})}+\sum_{j=1}^n\dfrac{t^{kw_j}e_m(t^{w_j-w_1},\ldots,
t^{w_j},\ldots, t^{w_j-w_n})}{(1-t^{w_j})\prod_{r=1,r\ne
j}^n(1-t^{w_j-w_r})}\,.\\
\end{align*}
Now we would like to express the equivariant index of $L_{S^1}^k\otimes
\wedge^mT^*$ in terms of the equivariant index of $L_{S^1}^k,$ the latter
given by the formula
\begin{align*}
\ind_{S^1}\left(L_{S^1}^k\right)&=\dfrac{1}{\prod_{j=1}^n(1-t^{-w_j})}+\sum_{j=1}^n\dfrac{t^{kw_j}}{(1-t^{w_j})\prod_{r=1,r\ne
j}^n(1-t^{w_j-w_r})}\,.
\end{align*}
>From the identity below between elementary symmetric polynomials
\begin{multline*}
e_m(t^{w_j-w_1},\ldots, t^{w_j},\ldots,
t^{w_j-w_n})=t^{mw_j}e_m(t^{-w_1},\ldots,t^{-w_j},\ldots,
t^{-w_n})\\+(-1)^{m+1}(1-t^{-w_j})\sum_{p=0}^{m-1}(-1)^pt^{w_j(p+1)}e_{p}(t^{-w_1},\ldots,t^{-w_j},\ldots,
t^{-w_n}),
\end{multline*}
it follows that
\begin{align*} &\ind_{S^1}\left(L_{S^1}^k\otimes
\wedge^mT^*\right)=e_m(t^{-w_1}, \ldots,
t^{-w_n})\ind_{S^1}\left(L_{S^1}^{k+m}\right)\\&+(-1)^{m+1}\sum_{p=0}^{m-1}(-1)^pe_{p}\left(t^{-w_1},\ldots,
t^{-w_n}\right)\left(\ind_{S^1}\left(L_{S^1}^{k+p+1}\right)-\ind_{S^1}\left(L_{S^1}^{k+p}\right)\right)\,.
\end{align*}
By taking $t\to 1,$ we obtain the equality
\begin{multline*}
\ind\left(L^k\otimes
\wedge^mT^*\right)\\
=\binom{n}{m}\ind\left(L^{k+m}\right)+(-1)^{m+1}\sum_{p=0}^{m-1}(-1)^p\binom{n}{p}\left(\ind\left(L^{k+p+1}\right)-\ind\left(L^{k+p}\right)\right)\,.
\end{multline*}
Recall that the Hilbert polynomial $H_0(x)$ of the projective space
$\mathbb{CP}^n$ is given by
$$
H_0(x)=\dfrac{(-1)^n}{n!}(x-1)\cdot\ldots \cdot(x-n)\,.
$$
We claim that for all integers $k \neq 0$ and all integers $0 \leq m
\leq n,$ the Hilbert polynomial $H_0(x)$ satisfies the formula
\begin{equation}\label{claim}
    \binom{n}{m}\frac{m}{k}H_{0}(k+m) =
    (-1)^{m+1}\sum_{p=0}^{m-1}(-1)^{p}\binom{n}{p}n\frac{H_{0}(k+p+1)}{(k+p)}\,.
\end{equation}
The statement follows from the formula as it implies that
\begin{align*}
&\ind\left(L^k\otimes \wedge^mT^*\right)\\
&= \binom{n}{m}H_0(k+m)+
(-1)^{m+1}\sum_{p=0}^{m-1}(-1)^{p}\binom{n}{p}\big(H_{0}(k+p+1)-H_{0}(k+p)
\big)\\
&=\binom{n}{m}H_0(k+m)+(-1)^{m+1}\sum_{p=0}^{m-1}(-1)^{p}\binom{n}{p}n\frac{H_{0}(k+p+1)}{(k+p)}\\
&=\binom{n}{m}H_0(k+m)+\dfrac{m}{k}\binom{n}{m}H_0(k+m)=\binom{n}{m}\dfrac{H_0(k+m)(k+m)}{k}\\
&=\dfrac{(-1)^n}{m!(n-m)!}(k-1)\cdot \ldots \cdot
(k-(n-m))(k+1)\cdot \ldots (k+m)\,.
\end{align*}
We conclude the proof by proving the claim for fixed $k$ by induction on $m$. For
$m = 1$ the formula is true. We next show that both sides satisfy
the same recursion as $m \mapsto m + 1$. Write $h_{L}(m)$ and
$h_{R}(m)$ for the left-hand side and the right-hand side of the
formula in \ref{claim}, respectively. First, we have the recursion
    \[
    h_{R}(m+1) - h_{R}(m) = (-1)^{m}\binom{n}{m}n\frac{H_{0}(k+m+1)}{k+
    m}.
    \]
    Furthermore, we compute
    \begin{align*}
    h_{L}(m&+1) - h_{L}(m) = (-1)^{m}\binom{n}{m+1}\frac{m+1}{k}H_{0}(k+m+1) - (-1)^{m+1}\binom{n}{m}\frac{m}{k}H_{0}(k+m) \\
    &=  (-1)^{m}\binom{n}{m+1}\frac{m+1}{k}H_{0}(k+m+1) - (-1)^{m+1}\binom{n}{m}\frac{m}{k}\frac{H_{0}(k+m+1)}{k+m}(k+m-n) \\
    &=(-1)^{m}\binom{n}{m}n\frac{H_{0}(k+m+1)}{k+
    m}.
    \end{align*}
Thus $h_{L}(m)$ and $h_{R}(m)$ satisfy the same recursion if
$m\mapsto m+1$, hence the formula follows by induction and we finish
the proof of the claim.
\end{proof}

\begin{rmk}In the figure below, the black dots represent the position on
the real line of the zeroes  of the Hilbert polynomials $H_m(x)$ for
$m\in\{0,1,\ldots, n-1,n\}.$ At each stage, starting from $m=0,$ the
zeroes of the polynomials are shifted one unit to the left but they
intriguingly jump one more unit left when they reach the origin.

\begin{figure}[!ht]
\centering
\begin{tikzpicture}[scale=1.2][transform shape]
  \node [scale=0.7,above] at (-4.7,0.3) {$m=0$};
  \node[draw,fill=white,shape=circle,scale=0.4] (a) at (-4.5,0) {};
  \node[draw,fill=white,shape=circle,scale=0.4] (b) at (-3.5,0) {};
  \node[scale=0.5] (c) at (-3,0) {};
  \node[scale=0.5] (d) at (-2.5,0) {};
  \node[draw,fill=white,shape=circle,scale=0.4] (e) at (-2,0) {};
  \node[draw,fill=white,shape=circle,scale=0.4] (f) at (-1,0) {};
  \node[draw,fill=white,shape=circle,scale=0.4] (g) at (0,0) {};
  \node[draw,fill=black,shape=circle,scale=0.4] (h) at (1,0) {};
  \node[draw,fill=black,shape=circle,scale=0.4] (i) at (2,0) {};
  \node[scale=0.5] (j) at (2.5,0) {};
  \node[scale=0.5] (k) at (3,0) {};
  \node[draw,fill=black,shape=circle,scale=0.4] (l) at (3.5,0) {};
  \node[draw,fill=black,shape=circle,scale=0.4] (m) at (4.5,0) {};
  \draw  (a) -- (b)
         (b) -- (c)
         (d) -- (e)
         (e) -- (f)
         (f) -- (g)
         (g) -- (h)
         (h) -- (i)
         (i) -- (j)
         (k) -- (l)
         (l) -- (m);
\node[draw,shape=circle,scale=0.05,fill=black]  at (2.6,0) {};
\node[draw,shape=circle,scale=0.05,fill=black]  at (2.75,0) {};
\node[draw,shape=circle,scale=0.05,fill=black]  at (2.9,0) {};
\node[draw,shape=circle,scale=0.05,fill=black]  at (-2.6,0) {};
\node[draw,shape=circle,scale=0.05,fill=black]  at (-2.75,0) {};
\node[draw,shape=circle,scale=0.05,fill=black]  at (-2.9,0) {};
 \node [scale=0.5,below] at (-1,-0.2) {-1};
 \node [scale=0.5,below] at (-2,-0.2) {-2};
 \node [scale=0.5,below] at (-3.5,-0.2) {$-(n-1)$};
 \node [scale=0.5,below] at (-4.5,-0.2) {$-n$};
 \node [scale=0.5,below] at (0,-0.2) {0};
 \node [scale=0.5,below] at (1,-0.2) {1};
 \node [scale=0.5,below] at (2,-0.2) {2};
 \node [scale=0.5,below] at (3.5,-0.2) {$n-1$};
 \node [scale=0.5,below] at (4.5,-0.2) {$n$};

  \node [scale=0.7,above] at (-4.7,-0.9) {$m=1$};
  \node[draw,fill=white,shape=circle,scale=0.4] (a2) at (-4.5,-1.2){};
  \node[draw,fill=white,shape=circle,scale=0.4] (b2) at (-3.5,-1.2) {};
  \node[scale=0.5] (c2) at (-3,-1.2) {};
  \node[scale=0.5] (d2) at (-2.5,-1.2) {};
  \node[draw,fill=white,shape=circle,scale=0.4] (e2) at (-2,-1.2) {};
  \node[draw,fill=black,shape=circle,scale=0.4] (f2) at (-1,-1.2) {};
  \node[draw,fill=white,shape=circle,scale=0.4] (g2) at (0,-1.2) {};
  \node[draw,fill=black,shape=circle,scale=0.4] (h2) at (1,-1.2) {};
  \node[draw,fill=black,shape=circle,scale=0.4] (i2) at (2,-1.2) {};
  \node[scale=0.5] (j2) at (2.5,-1.2) {};
  \node[scale=0.5] (k2) at (3,-1.2) {};
  \node[draw,fill=black,shape=circle,scale=0.4] (l2) at (3.5,-1.2) {};
  \node[draw,fill=white,shape=circle,scale=0.4] (m2) at (4.5,-1.2) {};
  \draw  (a2) -- (b2)
         (b2) -- (c2)
         (d2) -- (e2)
         (e2) -- (f2)
         (f2) -- (g2)
         (g2) -- (h2)
         (h2) -- (i2)
         (i2) -- (j2)
         (k2) -- (l2)
         (l2) -- (m2);
\node[draw,shape=circle,scale=0.05,fill=black]  at (2.6,-1.2) {};
\node[draw,shape=circle,scale=0.05,fill=black]  at (2.75,-1.2) {};
\node[draw,shape=circle,scale=0.05,fill=black]  at (2.9,-1.2) {};
\node[draw,shape=circle,scale=0.05,fill=black]  at (-2.6,-1.2) {};
\node[draw,shape=circle,scale=0.05,fill=black]  at (-2.75,-1.2) {};
\node[draw,shape=circle,scale=0.05,fill=black]  at (-2.9,-1.2) {};
 \node [scale=0.5,below] at (-1,-1.4) {-1};
 \node [scale=0.5,below] at (-2,-1.4) {-2};
 \node [scale=0.5,below] at (-3.5,-1.4) {$-(n-1)$};
 \node [scale=0.5,below] at (-4.5,-1.4) {$-n$};
 \node [scale=0.5,below] at (0,-1.4) {0};
 \node [scale=0.5,below] at (1,-1.4) {1};
 \node [scale=0.5,below] at (2,-1.4) {2};
 \node [scale=0.5,below] at (3.5,-1.4) {$n-1$};
 \node [scale=0.5,below] at (4.5,-1.4) {$n$};

\node[draw,shape=circle,scale=0.05,fill=black]  at (-0.3,-2.1) {};
\node[draw,shape=circle,scale=0.05,fill=black]  at (0,-2.1) {};
\node[draw,shape=circle,scale=0.05,fill=black]  at (0.3,-2.1) {};

\node [scale=0.7,above] at (-4.5,-2.8) {$m=n-1$};
  \node[draw,fill=white,shape=circle,scale=0.4] (a3) at (-4.5,-3.1){};
  \node[draw,fill=black,shape=circle,scale=0.4] (b3) at (-3.5,-3.1) {};
  \node[scale=0.5] (c3) at (-3,-3.1) {};
  \node[scale=0.5] (d3) at (-2.5,-3.1) {};
  \node[draw,fill=black,shape=circle,scale=0.4] (e3) at (-2,-3.1) {};
  \node[draw,fill=black,shape=circle,scale=0.4] (f3) at (-1,-3.1) {};
  \node[draw,fill=white,shape=circle,scale=0.4] (g3) at (0,-3.1) {};
  \node[draw,fill=black,shape=circle,scale=0.4] (h3) at (1,-3.1) {};
  \node[draw,fill=white,shape=circle,scale=0.4] (i3) at (2,-3.1) {};
  \node[scale=0.5] (j3) at (2.5,-3.1) {};
  \node[scale=0.5] (k3) at (3,-3.1) {};
  \node[draw,fill=white,shape=circle,scale=0.4] (l3) at (3.5,-3.1) {};
  \node[draw,fill=white,shape=circle,scale=0.4] (m3) at (4.5,-3.1) {};
  \draw  (a3) -- (b3)
         (b3) -- (c3)
         (d3) -- (e3)
         (e3) -- (f3)
         (f3) -- (g3)
         (g3) -- (h3)
         (h3) -- (i3)
         (i3) -- (j3)
         (k3) -- (l3)
         (l3) -- (m3);
\node[draw,shape=circle,scale=0.05,fill=black]  at (2.6,-3.1) {};
\node[draw,shape=circle,scale=0.05,fill=black]  at (2.75,-3.1) {};
\node[draw,shape=circle,scale=0.05,fill=black]  at (2.9,-3.1) {};
\node[draw,shape=circle,scale=0.05,fill=black]  at (-2.6,-3.1) {};
\node[draw,shape=circle,scale=0.05,fill=black]  at (-2.75,-3.1) {};
\node[draw,shape=circle,scale=0.05,fill=black]  at (-2.9,-3.1) {};
 \node [scale=0.5,below] at (-1,-3.3) {-1};
 \node [scale=0.5,below] at (-2,-3.3) {-2};
 \node [scale=0.5,below] at (-3.5,-3.3) {$-(n-1)$};
 \node [scale=0.5,below] at (-4.5,-3.3) {$-n$};
 \node [scale=0.5,below] at (0,-3.3) {0};
 \node [scale=0.5,below] at (1,-3.3) {1};
 \node [scale=0.5,below] at (2,-3.3) {2};
 \node [scale=0.5,below] at (3.5,-3.3) {$n-1$};
 \node [scale=0.5,below] at (4.5,-3.3) {$n$};

  \node [scale=0.7,above] at (-4.7,-4) {$m=n$};
  \node[draw,fill=black,shape=circle,scale=0.4] (a4) at (-4.5,-4.3){};
  \node[draw,fill=black,shape=circle,scale=0.4] (b4) at (-3.5,-4.3) {};
  \node[scale=0.5] (c4) at (-3,-4.3) {};
  \node[scale=0.5] (d4) at (-2.5,-4.3) {};
  \node[draw,fill=black,shape=circle,scale=0.4] (e4) at (-2,-4.3) {};
  \node[draw,fill=black,shape=circle,scale=0.4] (f4) at (-1,-4.3) {};
  \node[draw,fill=white,shape=circle,scale=0.4] (g4) at (0,-4.3) {};
  \node[draw,fill=white,shape=circle,scale=0.4] (h4) at (1,-4.3) {};
  \node[draw,fill=white,shape=circle,scale=0.4] (i4) at (2,-4.3) {};
  \node[scale=0.5] (j4) at (2.5,-4.3) {};
  \node[scale=0.5] (k4) at (3,-4.3) {};
  \node[draw,fill=white,shape=circle,scale=0.4] (l4) at (3.5,-4.3) {};
  \node[draw,fill=white,shape=circle,scale=0.4] (m4) at (4.5,-4.3) {};
  \draw  (a4) -- (b4)
         (b4) -- (c4)
         (d4) -- (e4)
         (e4) -- (f4)
         (f4) -- (g4)
         (g4) -- (h4)
         (h4) -- (i4)
         (i4) -- (j4)
         (k4) -- (l4)
         (l4) -- (m4);
\node[draw,shape=circle,scale=0.05,fill=black]  at (2.6,-4.3) {};
\node[draw,shape=circle,scale=0.05,fill=black]  at (2.75,-4.3) {};
\node[draw,shape=circle,scale=0.05,fill=black]  at (2.9,-4.3) {};
\node[draw,shape=circle,scale=0.05,fill=black]  at (-2.6,-4.3) {};
\node[draw,shape=circle,scale=0.05,fill=black]  at (-2.75,-4.3) {};
\node[draw,shape=circle,scale=0.05,fill=black]  at (-2.9,-4.3) {};
 \node [scale=0.5,below] at (-1,-4.5) {-1};
 \node [scale=0.5,below] at (-2,-4.5) {-2};
 \node [scale=0.5,below] at (-3.5,-4.5) {$-(n-1)$};
 \node [scale=0.5,below] at (-4.5,-4.5) {$-n$};
 \node [scale=0.5,below] at (0,-4.5) {0};
 \node [scale=0.5,below] at (1,-4.5) {1};
 \node [scale=0.5,below] at (2,-4.5) {2};
 \node [scale=0.5,below] at (3.5,-4.5) {$n-1$};
 \node [scale=0.5,below] at (4.5,-4.5) {$n$};
\end{tikzpicture}
\end{figure}

\end{rmk}

In order to introduce the second important property of the polynomials $H_m$, we first recall the following:
For a compact almost complex manifold equipped with a circle action and isolated fixed points, it is well-known (see for instance \cite{HBJ,Li}) that
$\ind(\wedge^m T^*)=(-1)^m N_m$, where $N_m$ denotes the number of fixed points with $m$ negative weights.
Since $H_m(0)$ is by definition $\ind(\wedge^m T^*)$,
one has that
$$
\sharp{\text{fixed points}}=\sum_{m=0}^n N_m = \sum_{m=0}^n (-1)^m\,H_m(0).
$$
The next proposition generalizes the above equation.
\begin{prop}\label{fixed points and polynomials}
Let $(\M^{2n}, \J, S^1)$ be a compact almost complex manifold which is acted on by a circle $S^1$ which preserves $\J$ and with discrete fixed point set $\M^{S^1}$.
Then, for every $x\in \C$,
\begin{align*}
\sharp{\text{fixed points}}=\sum_{m=0}^n (-1)^m\,H_m(x).
\end{align*}
\end{prop}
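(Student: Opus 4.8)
The plan is to compute the alternating sum $\sum_{m=0}^n(-1)^m H_m(k)$ for every integer $k$ by equivariant localization, obtaining a value that is independent of $k$, and then to upgrade the resulting identity from integers to all $x\in\C$ using the fact that each $H_m$ is a polynomial. I would begin exactly as in the proof of Proposition \ref{symmetries H}: fix the equivariant extension $L_{S^1}$ of $L$ normalized so that \eqref{relation c1 and L} holds, and write $W(P):=w_1(P)+\cdots+w_n(P)$, so that $L^k_{S^1}(P)=t^{-kW(P)/N}$. Applying the $K$-theoretic localization formula \eqref{index K} to each bundle $L^k\otimes\wedge^m T^*$ gives
\[
\ind_{S^1}\!\left(L^k\otimes\wedge^m T^*\right)=\sum_{P\in\M^{S^1}}\frac{t^{-kW(P)/N}\,e_m\!\left(t^{-w_1(P)},\ldots,t^{-w_n(P)}\right)}{\prod_{j=1}^n\left(1-t^{-w_j(P)}\right)},
\]
where $e_m$ denotes the $m$-th elementary symmetric polynomial.

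The key step is the generating identity $\sum_{m=0}^n(-1)^m e_m(y_1,\ldots,y_n)=\prod_{j=1}^n(1-y_j)$, which follows by evaluating $\prod_{j=1}^n(1+y_j s)=\sum_{m=0}^n e_m(y)\,s^m$ at $s=-1$. Forming the alternating sum over $m$ and applying this identity at each fixed point with $y_j=t^{-w_j(P)}$ collapses the numerator to $\prod_{j=1}^n(1-t^{-w_j(P)})$, which cancels the denominator. Hence
\[
\sum_{m=0}^n(-1)^m\ind_{S^1}\!\left(L^k\otimes\wedge^m T^*\right)=\sum_{P\in\M^{S^1}}t^{-kW(P)/N}.
\]
Passing to the non-equivariant index by letting $t\to 1$, as in \eqref{index}, the right-hand side becomes the number of fixed points, so that $\sum_{m=0}^n(-1)^m H_m(k)=|\M^{S^1}|$ for every integer $k$.

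Finally, I would observe that $\sum_{m=0}^n(-1)^m H_m(x)$ is a polynomial in $x$ while $|\M^{S^1}|$ is a constant; since they agree at all integers, their difference is a polynomial with infinitely many zeros and therefore vanishes identically, which yields the claim for all $x\in\C$. I do not expect a genuine obstacle here: the computation is simply the alternating-sum specialization of Proposition \ref{symmetries H}, and the only points requiring care are the cancellation of the denominator via the elementary symmetric polynomial identity and the polynomial-extension argument at the end (which mirrors the passage from \eqref{eq symmetry} to \eqref{symmetry H}).
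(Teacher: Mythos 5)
Your proposal is correct and is essentially the paper's own proof: both fix the equivariant extension of $L$ as in Proposition \ref{symmetries H}, apply the localization formula \eqref{index K}, use the identity $\prod_{j=1}^n\left(1-t^{-w_j(P)}\right)=\sum_{m=0}^n(-1)^m e_m\left(t^{-w_1(P)},\ldots,t^{-w_n(P)}\right)$ to identify $\sum_{m=0}^n(-1)^m\ind_{S^1}\left(L^k\otimes\wedge^m T^*\right)$ with $\sum_{P\in\M^{S^1}}t^{-kW(P)/N}$, take $t\to 1$, and conclude for all $x\in\C$ by the polynomial-with-infinitely-many-zeros argument. The only cosmetic difference is the direction in which the chain of equalities is written.
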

\begin{proof}
Following the first part of the proof of Proposition \ref{symmetries H}, we consider an equivariant extension of the bundle $L$ with respect to
the new action of $S^1$ such that the restriction of $L$ to the fiber over a fixed point $P$ is the representation $t^{-\frac{w_1(P)+\cdots + w_n(P)}{N}}=t^{-\frac{W(P)}{N}}$. Then,
for every integer $k$, we have
\begin{align*}
\sum_{P\in \M^{S^1}} t^{-k\frac{W(P)}{N}}&=\sum_{P\in \M^{S^1}}
\dfrac{t^{-k\frac{W(P)}{N}}\left(1-t^{-w_1(P)}\right)\cdot\ldots\cdot\left(1-t^{-w_n(P)}\right)}{
\left(1-t^{-w_1(P)}\right)\cdot\ldots\cdot\left(1-t^{-w_n(P)}\right)}\\
&=\sum_{P\in \M^{S^1}}
\dfrac{t^{-k\frac{W(P)}{N}}\sum_{m=0}^n(-1)^me_m\left(t^{-w_1(P)},
\ldots,
t^{-w_n(P)}\right)}{\left(1-t^{-w_1(P)}\right)\cdot\ldots\cdot\left(1-t^{-w_n(P)}\right)}\\
&=\sum_{m=0}^n (-1)^m\ind_{S^1}\left(L^k\otimes \wedge^mT^*\right).
\end{align*}
By taking $t\to 1,$ we obtain
\begin{align*}
\sharp{\text{fixed points}}=\sum_{m=0}^n (-1)^m\ind\left(L^k\otimes
\wedge^mT^*\right)=\sum_{m=0}^n(-1)^m\,H_m(k).
\end{align*}
As the result holds for every integer $k,$ it holds for every $x.$
\end{proof}

We are now ready to specialize to the case in which the index is maximal.
First of all we have the following
\begin{lemma}\label{Hn}
Let $(\M^{2n}, \J, S^1)$ be a compact almost complex manifold which is acted on by a circle $S^1$ which preserves $\J$ and with discrete fixed point set $\M^{S^1}$.
Let $N_0$ be the number of fixed points with no negative weights.
Assume that the index $\k0$ is $n+1$. Then
$$
H_n(x)=(-1)^n\dfrac{N_0}{n!}(x+1)\cdot\ldots\cdot (x+n).
$$
\end{lemma}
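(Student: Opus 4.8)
The plan is to reduce everything to the ordinary Hilbert polynomial $H_0(x)=\ind(L^x)$ and to locate its roots. Since the index is maximal, $\k0=n+1$, we take $N=n+1$ in the definition $L=(\wedge^nT^*)^{\frac1N}$, so that the canonical bundle satisfies $\wedge^nT^*=L^{N}=L^{\,n+1}$. Substituting this into \eqref{Hmk} gives, for every integer $k$,
\[
H_n(k)=\ind\!\left(L^k\otimes\wedge^nT^*\right)=\ind\!\left(L^{k+n+1}\right)=H_0(k+n+1),
\]
and since two polynomials agreeing on all integers coincide, $H_n(x)=H_0(x+n+1)$. It therefore suffices to prove that $H_0(x)=(-1)^n\frac{N_0}{n!}(x-1)(x-2)\cdots(x-n)$. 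I will use two standard facts about $H_0$: by the Atiyah--Singer formula it is a polynomial of degree at most $n$ (the usual Hilbert polynomial), and its value at $0$ is $H_0(0)=\ind(\wedge^0T^*)=N_0$, the Todd genus, which equals the number of fixed points with no negative weights.

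The heart of the matter is the vanishing $H_0(k)=\ind(L^k)=0$ for $k=1,\dots,n$, and I expect this to be the main obstacle. I would establish it through the equivariant localization formula \eqref{index}: choosing, as in the proof of Proposition \ref{symmetries H}, the equivariant extension of $L$ normalized so that $L_{S^1}(P)=t^{-W(P)/N}$ as in \eqref{relation c1 and L} (with $W(P)=w_1(P)+\cdots+w_n(P)$, and noting that $W(P)/N\in\Z$ because $L_{S^1}$ restricts to an honest one--dimensional representation at each $P$), one obtains
\[
\ind\!\left(L^k\right)=\lim_{t\to1}\sum_{P\in\M^{S^1}}\frac{t^{-kW(P)/N}}{\prod_{j=1}^n\left(1-t^{-w_j(P)}\right)}.
\]
The claim is that this limit vanishes for $1\le k\le N-1=n$. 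This is precisely the generalized Kodaira--type vanishing asserting that the Hilbert polynomial of a manifold of index $\k0$ vanishes at $1,\dots,\k0-1$, which is established for circle actions with isolated fixed points in \cite{S}. It is worth stressing that this vanishing does \emph{not} follow from the structural symmetries alone: Proposition \ref{symmetries H} yields only $H_0(x)=(-1)^nH_0(n+1-x)$, a relation consistent with, but far from determining, the roots of $H_0$.

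Granting the vanishing, the conclusion is immediate. The polynomial $H_0$ has degree at most $n$ and vanishes at the $n$ distinct points $1,\dots,n$, so
\[
H_0(x)=c\,(x-1)(x-2)\cdots(x-n)
\]
for some constant $c$. Evaluating at $x=0$ and using $H_0(0)=N_0$ gives $c\,(-1)^n n!=N_0$, hence $c=(-1)^n\frac{N_0}{n!}$. Substituting $x\mapsto x+n+1$ then yields
\[
H_n(x)=H_0(x+n+1)=(-1)^n\frac{N_0}{n!}(x+1)(x+2)\cdots(x+n),
\]
as claimed. As a consistency check, this $H_0$ indeed satisfies the symmetry $H_0(x)=(-1)^nH_0(n+1-x)$ of Proposition \ref{symmetries H}, and it reproduces the computation of Proposition \ref{projective space H} in the case $N_0=1$.
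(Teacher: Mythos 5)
Your proposal is correct, but it takes a route that differs from the paper's in both of its steps. The paper reduces $H_n$ to $H_0$ through the duality \eqref{symmetry H} of Proposition \ref{symmetries H}, specialized to $m=n$, i.e.\ $H_n(x)=(-1)^nH_0(-x)$, and then quotes \cite[Proposition 5.1]{S} for the \emph{closed formula} $H_0(x)=(-1)^n\frac{N_0}{n!}(x-1)\cdots(x-n)$ (taking care of the sign convention, since the polynomial in \cite{S} is $H_0(-x)$). You instead reduce via the identification $\wedge^nT^*\cong L^{n+1}$, which gives the shift $H_n(x)=H_0(x+n+1)$ directly from the definition \eqref{Hmk} — this is legitimate, since the topological index only sees the Chern character, hence only $c_1$ in rational cohomology — and you then \emph{reconstruct} the formula for $H_0$ from three inputs: the degree bound $\deg H_0\le n$ (Atiyah--Singer), the normalization $H_0(0)=\ind(\wedge^0T^*)=N_0$ (the Todd genus counts fixed points with no negative weights), and the vanishing $H_0(1)=\dots=H_0(n)=0$, which you cite from \cite{S}. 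The two reductions are equivalent (they differ exactly by the symmetry $H_0(x)=(-1)^nH_0(n+1-x)$, which you note as a consistency check), and both arguments ultimately rest on the same external input from \cite{S}; the difference is that the paper imports the full formula and uses Proposition \ref{symmetries H}, whereas your argument bypasses Proposition \ref{symmetries H} entirely and imports only the vanishing statement, filling in the rest with elementary facts. You are also right to flag that the vanishing at $1,\dots,n$ is the irreducible core of the matter: it cannot be recovered from the symmetries of the $H_m$ alone, so some form of the result of \cite{S} (or an independent proof of the vanishing) is genuinely needed in either approach.
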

\begin{proof}
From \cite[Proposition\ 5.1]{S} we know that the polynomial $H_0$ is known in this case, and it is given by
$$
H_0(x)=(-1)^n\frac{N_0}{n!}(x-1)\cdots(x-n).
$$
Note that the polynomial $\operatorname{H}(x)$ in \cite{S} is exactly $H_0(-x)$.
Then equation \eqref{symmetry H} gives the desired conclusion.
\end{proof}
The next theorem is the key ingredient for the proof of Theorem \ref{main geometry}.
\begin{thm}\label{main 2}
Let $(\M, \J, S^1)$ be a compact almost complex manifold of dimension $2n$ which is acted on by a circle $S^1$ which preserves $\J$ and with discrete fixed point set.
Let $N_0$ be the number of fixed points with $0$ negative weights.
Assume that the index $\k0$ is $n+1$. If the elliptic genus $\varphi_{n+1}(\M)$ vanishes, then
$$
\sharp{\text{fixed points}}=N_0(n+1).
$$
\end{thm}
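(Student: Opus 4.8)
The plan is to evaluate the constant polynomial of Proposition~\ref{fixed points and polynomials} at the single value $x=1$, chosen so that the hypothesis $\varphi_{n+1}(\M)\equiv 0$ annihilates every term but the top one. Throughout I set $N=\k0=n+1$, so that $L$ is the line bundle with $c_1(L)=-\tfrac{c_1}{n+1}$ and the polynomials of \eqref{Hmk} are $H_m(k)=\ind(L^k\otimes\wedge^m T^*)$. Proposition~\ref{fixed points and polynomials} gives
\[
\sharp\text{fixed points}=\sum_{m=0}^n(-1)^m H_m(1).
\]
The key claim is that $H_m(1)=0$ for all $0\le m\le n-1$; granting it, only the $m=n$ term survives, so $\sharp\text{fixed points}=(-1)^nH_n(1)$. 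Since $\k0=n+1$, Lemma~\ref{Hn} yields $H_n(x)=(-1)^n\tfrac{N_0}{n!}(x+1)\cdots(x+n)$, whence
\[
H_n(1)=(-1)^n\frac{N_0}{n!}(n+1)!=(-1)^nN_0(n+1),
\]
and therefore $\sharp\text{fixed points}=N_0(n+1)$, as desired.

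The whole argument thus rests on the claim $H_m(1)=0$ for $m\le n-1$, and this is precisely where one must use $\varphi_{n+1}(\M)\equiv 0$ in full (its values at the cusps alone would not suffice). Because $\varphi_{n+1}(\M)$ vanishes identically, so does $\varphi_{n+1}(\M)|_nA$ for every $A\in\SL_2(\Z)$; by \eqref{eq elliptic genus cusps} this forces $\varphi_{n+1,(k,\ell)}(\M)\equiv 0$, and hence the normalized genus $\widetilde\varphi_{n+1,(k,\ell)}(\M)\equiv 0$, at every cusp $(k,\ell)$. In particular all of its Fourier coefficients vanish. I would read off the claim from the cusp $(k,\ell)=(1,0)$, using the description \eqref{infinite tensor}--\eqref{virtual bundle} of $\widetilde\varphi_{n+1,(1,0)}(\M)=\ind(L\otimes R_{n+1,(1,0)}(q))$.

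Here the leading factor of $R_{n+1,(1,0)}(q)=\wedge_{-q^{1/(n+1)}}T^*\otimes\prod_{r\ge1}(\cdots)$, see \eqref{virtual bundle}, is $\wedge_{-q^{1/(n+1)}}T^*$. After the substitution $q\mapsto q^{n+1}$, which renders all exponents integral, this factor becomes $\wedge_{-q}T^*=\sum_{m\ge0}(-1)^mq^m\wedge^m T^*$, while the infinite product contributes only powers $q^{\ge n}$: the four factor types $\wedge_{-q^{(n+1)r+1}}T^*$, $\wedge_{-q^{(n+1)r-1}}T$, $S_{q^{(n+1)r}}T^*$, $S_{q^{(n+1)r}}T$ have lowest nonconstant exponents $(n+1)r+1$, $(n+1)r-1$, $(n+1)r$, $(n+1)r$, all $\ge n$ for $r\ge1$ (the minimum $n$ occurring for $\wedge_{-q^{(n+1)r-1}}T$ at $r=1$). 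Consequently, for $0\le m\le n-1$ the coefficient of $q^m$ in $\ind\!\left(L\otimes R_{n+1,(1,0)}(q^{n+1})\right)$ equals $(-1)^m\ind(L\otimes\wedge^m T^*)=(-1)^mH_m(1)$, with no contribution from the product; since this coefficient vanishes, $H_m(1)=0$.

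I expect the main obstacle to be exactly this last bookkeeping step: one has to verify carefully that the twisting by $L$ and the entire infinite product contribute nothing to the coefficients of $q^m$ for $m\le n-1$, so that these coefficients genuinely isolate $H_m(1)$. Once the lowest exponent $n$ of the product is pinned down (tightly, through $\wedge_{-q^{(n+1)r-1}}T$ at $r=1$), the remaining manipulations are routine, and the evaluation of $H_n(1)$ via Lemma~\ref{Hn} closes the argument. Note that this uses the rigidity theorem only through the identical vanishing of $\varphi_{n+1}(\M)$, exactly as hypothesized, and the case $n=1$ is subsumed by the same computation.
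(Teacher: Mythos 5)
Your proposal is correct and follows essentially the same route as the paper: the paper's Proposition~\ref{ellipticgenus0} is precisely your key claim $H_m(1)=0$ for $0\le m\le n-1$, proved in the same way (vanishing of $\varphi_{n+1}(\M)$ at all cusps, then isolating the Fourier coefficients of $q^m$, $m\le n-1$, of the normalized genus $\widetilde{\varphi}_{n+1,(1,\ell)}(\M)(q^{n+1})$, which the infinite product in \eqref{virtual bundle} cannot affect since its lowest nonconstant exponent is $n$), and the conclusion via Proposition~\ref{fixed points and polynomials} at $x=1$ together with Lemma~\ref{Hn} is identical. The only cosmetic differences are that you fix $\ell=0$ while the paper keeps $\ell$ general, and you rederive the exponent bookkeeping directly from \eqref{virtual bundle} rather than citing the expansion \eqref{expansion}.
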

In order to prove Theorem \ref{main 2} we need to analyze the first terms of the Fourier expansions of the elliptic genus of level $N = n+1$ at the cusps. First note that by \eqref{eq elliptic genus cusps} the vanishing of the elliptic genus $\varphi_{N}(\M)$ implies its vanishing at all cusps, i.e., the vanishing of $\varphi_{N,(k,\ell)}(\M)$ for every primitive $N$-division point $w = \frac{k}{N}\tau+\frac{\ell}{N}$. In particular, we also have $\widetilde{\varphi}_{N,(k,\ell)}(\M)(q^{N}) = 0$ for the normalized elliptic genus defined in \eqref{eq normalized elliptic genus}. On the other hand, from \eqref{infinite tensor}, it is easy to see that the first terms of $\widetilde{\varphi}_{N,(k,\ell)}(\M)(q^{N})$ in terms of powers of $q$ are given by
\begin{multline} \label{expansion}
\widetilde{\varphi}_{N, (k,
\ell)}(\M)\left(q^{N}\right)=\ind\left(L^k\right)-\ind\left(L^k\otimes
T^*\right)\zeta_{N}^{\ell}q^k+\ldots+(-1)^m\ind\left(L^k\otimes
\wedge^mT^*\right)\zeta_{N}^{\ell m}q^{k m}\\-\ind\left(L^k\otimes
T\right)\zeta_{N}^{-\ell}q^{N-k}+\ldots+(-1)^r\ind\left(L^k\otimes
\wedge^r T\right)\zeta_{N}^{-\ell r}q^{(N-k)r} + \cdots,
\end{multline}
\noindent where $m$ and $r$ are chosen so that $km$ and $(N-k)r <N$,
and what is left contains powers of $q$ with exponent greater than
$\max\{km,(N-k)r\}$. The next proposition analyses this expansion
when $N=n+1$.

\begin{prop}\label{ellipticgenus0}
Let $(\M,\J)$ be a compact almost complex manifold of dimension
$2n$, with $n \geq 2$. Assume that the index $\k0$ is $n+1.$ If
$\varphi_{n+1}(\M)=0$ then
$$
H_m(1)=\ind\left(L\otimes \wedge^mT^*\right)=0
$$
for $0 \leq m \leq n-1.$
\end{prop}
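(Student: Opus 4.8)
The plan is to read off the desired vanishing from the low-order Fourier coefficients of the normalized elliptic genus at a single well-chosen cusp. Set $N = n+1$, so that by hypothesis $\k0 = N$ and $L = (\wedge^n T^*)^{\frac1N}$ with $c_1(L) = -\frac{c_1}{N}$. Since $\varphi_{n+1}(\M) = 0$, relation \eqref{eq elliptic genus cusps} shows that the elliptic genus vanishes at every cusp of $\Gamma_1(N)$; in particular, as recalled just before the statement, the normalized genus $\widetilde{\varphi}_{N,(k,\ell)}(\M)(q^{N})$ vanishes identically for every primitive $N$-division point $\frac{k}{N}\tau + \frac{\ell}{N}$. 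I would apply this with the choice $k = 1$ and $\ell = 1$, which is a primitive $N$-division point since $\gcd(1,1) = 1$.

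The key step is to locate where the various terms of the expansion \eqref{expansion} sit once $k = 1$. The exterior-power contributions $(-1)^m\ind\!\left(L\otimes\wedge^m T^*\right)\zeta_{N}^{\ell m}q^{km}$ occur at the powers $q^{0}, q^{1}, \dots, q^{n}$ (since $km = m < N$ forces $m \le n$), whereas the first tangent-bundle contribution $-\ind\!\left(L\otimes T\right)\zeta_{N}^{-\ell}q^{N-k}$ sits at $q^{N-1} = q^{n}$, and every remaining term of \eqref{expansion} carries a power of $q$ strictly larger than $n$ (the tangent powers $q^{(N-k)r} = q^{nr}$ only appear for $r = 1$ in the relevant range, as $r = 2$ already gives $q^{2n}$). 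Consequently, for each $j$ with $0 \le j \le n-1$, the coefficient of $q^{j}$ in $\widetilde{\varphi}_{N,(1,1)}(\M)(q^{N})$ receives a contribution from the single exterior term with $m = j$ only, so using the definition \eqref{Hmk} it equals $(-1)^{j}\zeta_{N}^{\,j}\,\ind\!\left(L\otimes\wedge^{j}T^*\right) = (-1)^{j}\zeta_{N}^{\,j}\,H_{j}(1)$.

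Since $\widetilde{\varphi}_{N,(1,1)}(\M)(q^{N})$ vanishes identically, each of these coefficients is zero, and as $\zeta_{N}^{\,j}\neq 0$ this forces $H_{j}(1) = \ind\!\left(L\otimes\wedge^{j}T^*\right) = 0$ for all $0 \le j \le n-1$, which is precisely the claim.

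The only delicate point — and the step I would verify most carefully — is the bookkeeping of the $q$-orders in \eqref{expansion}: one must confirm that for the chosen value $k = 1$ the lowest tangent/symmetric contribution occurs at $q^{N-k} = q^{n}$, strictly above the window $q^{0}, \dots, q^{n-1}$ from which the indices $H_{0}(1), \dots, H_{n-1}(1)$ are extracted. This is exactly where the maximality hypothesis $\k0 = n+1$ enters (it yields $N - k = n$), and it is what guarantees that the low-order coefficients isolate the pure exterior-power indices $\ind\!\left(L\otimes\wedge^{m}T^*\right)$ with no interference from tangent-bundle or higher-order terms.
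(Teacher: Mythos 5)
Your proposal is correct and follows essentially the same route as the paper's proof: both exploit the vanishing of the normalized elliptic genus $\widetilde{\varphi}_{n+1,(1,\ell)}(\M)(q^{n+1})$ at a cusp with $k=1$ and read off, from the expansion \eqref{expansion}, that the coefficients of $q^{0},\dots,q^{n-1}$ are exactly $(-1)^{m}\zeta_{n+1}^{\ell m}\ind\left(L\otimes\wedge^{m}T^*\right)$, the first tangent-bundle term only entering at $q^{n}$. The paper keeps $\ell$ general where you fix $\ell=1$, but this makes no difference to the argument.
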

\begin{proof}
As explained above, the vanishing of $\varphi_{N}(\M)$ implies
$\widetilde{\varphi}_{N,(k,\ell)}(\M)(q^{N})$ for every primitive
$N$-division point $\frac{k}{N}\tau + \frac{\ell}{N}$. When $N =
n+1$ and $k=1$ we can choose $m=n-1,r=1$ and obtain from
\eqref{expansion}
\begin{align*}
0 &= \widetilde{\varphi}_{n+1, (1, \ell)}(\M)\left(q^{n+1}\right) \\
&=\ind(L)-\ind(L\otimes
T^*)\zeta_{n+1}^\ell q^1+\ldots+(-1)^{n-1}\ind(L\otimes
\wedge^{n-1}T^*)\zeta_{n+1}^{\ell(n-1)}q^{(n-1)}\\
&\quad +(-1)^n\ind(L\otimes \wedge^nT^*)\zeta_{n+1}^{ln}q^n
-\ind(L\otimes T)\zeta_{n+1}^{-\ell}q^{n}+\text{higher terms.}
\end{align*}

Since for $n\geq 2$ the exponents of $q$ in the above expression are
all distinct, we obtain the desired claim.
\end{proof}

\begin{proof}[Proof of Theorem \ref{main 2}]
Combining Proposition \ref{fixed points and polynomials} for $x=1$ and Proposition \ref{ellipticgenus0} we have that the number of fixed points is $\sum_{m=0}^n (-1)^m H_m(1)=(-1)^n H_n(1)$ which, by Lemma \ref{Hn}, equals $N_0 (n+1)$.
\end{proof}

The next theorem combines results already known in the literature, and is the second key ingredient for the proof of Theorem \ref{main geometry}.

\begin{thm}\label{main 3}
Let $(\M,\omega,\psi)$ be a compact, connected symplectic manifold of dimension $2n$ which is acted on by a circle in a Hamiltonian way and isolated fixed points.
Let $\chi(\M)$ be its Euler characteristic and $\k0$ its index, and assume that $\chi(\M)=\k0=n+1$. Then $\M$ is complex cobordant and homotopy equivalent to $\C P^n$.
\end{thm}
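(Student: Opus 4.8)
The plan is to reduce the statement to two easy inputs — the additive Betti numbers, extracted from Morse theory — and one deep input — the ring structure together with the Chern classes of $\M$, supplied by the cited classification results — after which both conclusions follow by standard topology.

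First I would record the combinatorial consequences of the hypotheses. Since the action is Hamiltonian with isolated fixed points, the moment map is a perfect Morse function whose critical points are the fixed points and all of whose indices are even, as recalled in Subsection~\ref{generalities s1 action}. Hence $b_{\mathrm{odd}}(\M) = 0$, the cohomology is torsion free, and the number of fixed points equals $\sum_{j=0}^{n} b_{2j}(\M) = \chi(\M) = n+1$. Because the symplectic form forces $b_{2j}(\M) \geq 1$ for every $j \in \{0,\dots,n\}$, the equality $\sum_{j=0}^{n} b_{2j}(\M) = n+1$ forces $b_{2j}(\M) = 1$ for all $j$. Thus $\M$ has exactly $n+1$ fixed points, one of each Morse index $0,2,\dots,2n$, and the additive integral cohomology of $\M$ already agrees with that of $\C P^n$. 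Moreover $\M$ is simply connected by Remark~\ref{properties index}(3).

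The heart of the argument is to upgrade this additive information to the multiplicative and characteristic-class level. Here I would invoke the cited results: in the minimal-fixed-point regime (exactly $n+1$ isolated fixed points), together with the maximality of the index $\k0 = n+1$, the work of Hattori \cite{Ha}, Tolman \cite{T} and Charton \cite{C} determines the integral cohomology ring of $\M$ to be $\Z[x]/(x^{n+1})$ with a degree-two generator $x$, and the total Chern class of $\M$ to be $(1+x)^{n+1}$, exactly as for $\C P^n$ (note that $c_1 = (n+1)x$ is consistent with the index being maximal, and we choose $x$ to be the positive generator, so that $\int_\M x^n = 1$). This is the step I expect to be the main obstacle, since it is precisely the content of those theorems; the genuine work lies in assembling their hypotheses and conclusions correctly, and in particular in checking that the conditions $\chi(\M) = \k0 = n+1$ place us in the range to which they apply.

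Granting the ring structure and the Chern data, both conclusions are now formal. For complex cobordism: expanding $(1+x)^{n+1}$ gives $c_j = \binom{n+1}{j}x^j$, so every Chern number $C_\lambda(\M) = \int_\M c_{\lambda_1}\cdots c_{\lambda_k} = \prod_i \binom{n+1}{\lambda_i}$ coincides with the corresponding Chern number of $\C P^n$; by the Milnor--Novikov theorem recalled in Subsection~\ref{genera}, equality of all Chern numbers is equivalent to being complex cobordant, so $\M$ is complex cobordant to $\C P^n$. For homotopy equivalence: the class $x \in H^2(\M;\Z)$ is classified by a map $f\colon \M \to \C P^\infty = K(\Z,2)$; since $\M$ is a $2n$-dimensional CW complex, cellular approximation homotopes $f$ into the $2n$-skeleton $\C P^n \subset \C P^\infty$, yielding $f\colon \M \to \C P^n$ with $f^*$ sending the generator of $H^2(\C P^n;\Z)$ to $x$. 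As $f^*$ is a ring homomorphism between copies of $\Z[x]/(x^{n+1})$ sending generator to generator, it is an isomorphism in every degree, hence an isomorphism on integral homology as well (all groups being free). Since $\M$ and $\C P^n$ are simply connected, the homology Whitehead theorem shows that $f$ is a homotopy equivalence, which completes the proof.
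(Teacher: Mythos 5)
Your skeleton matches the paper's: Morse theory to pin down the Betti numbers, the Hattori--Tolman--Charton results for the deep middle step, and standard topology to conclude. Your first step is exactly the paper's, and your closing step is correct and in fact more self-contained than the paper's (the paper invokes \cite[Theorem 5.2]{C} for the homotopy equivalence, whereas your map to $K(\Z,2)$ compressed into $\C P^n$ plus the homology Whitehead theorem works and needs no further citation). However, the step you explicitly defer --- ``assembling the hypotheses and conclusions correctly'' --- is precisely where the content of the theorem lies, and your black-box invocation is not justified as stated: none of the cited theorems has hypotheses of the form ``$\chi(\M)=\k0=n+1$,'' and none of them outputs the cohomology ring or the total Chern class directly. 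The missing idea is a line-bundle construction. One must: (i) use $b_2(\M)=1$ to rescale $\omega$ to an integral class, take the prequantization line bundle $L$ with $c_1(L)=[\omega]$, and lift the $S^1$-action to $L$ using the equivariant class $[\omega+\psi]$ (via \cite{HY}); (ii) verify that $L$ is quasi-ample in Hattori's sense (using $\int_\M \omega^n\neq 0$) and satisfies Hattori's condition D --- this is where the index enters: since $H^2(\M;\Z)$ has rank one and $\k0=n+1$, one can arrange $c_1=(n+1)[\omega]$, whence $\sum_{j=1}^n w_j(P)=(n+1)\psi(P)+a$ at every fixed point, and one also needs Tolman's result \cite[Proposition 3.4]{T} that $\psi$ takes distinct values at distinct fixed points; (iii) only then apply \cite[Theorem 5.7]{Ha}, whose conclusion is not about rings or Chern classes but about weights: the isotropy weights at the $n+1$ fixed points coincide with those of a standard linear $S^1$-action on $\C P^n$.

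From the weights everything else follows, but in a different logical order than you assert. The Chern numbers --- not the total Chern class --- are computed from the weights by the localization formula \eqref{chern number weights}, and agreement of Chern numbers already gives the cobordism statement via Milnor--Novikov; your claim that the cited results determine the total Chern class to be $(1+x)^{n+1}$ overstates what they provide (fortunately it is also more than you need). For the ring structure, \cite[Corollary 3.19]{T} takes the weights as input and yields that $H^{2j}(\M;\Z)$ is generated by a suitable rescaling of $c_1^{\,j}$; since the weights are standard, the ring is that of $\C P^n$, after which your Whitehead argument (or the paper's appeal to \cite{C}) finishes the proof. So the proposal is right in outline, but the bridge from the numerical hypotheses to the classification theorems --- the prequantization bundle satisfying condition D, and Hattori's weight rigidity --- is the actual proof, and it is absent.
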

\begin{proof}
The proof of this theorem combines results of Hattori \cite{Ha}, Tolman \cite{T}, and Charton \cite{C} in the following way.

 Since we are assuming $\chi(\M)=n+1$ and $\M$ is endowed with a Hamiltonian $S^1$-action with isolated fixed points,  the Betti numbers satisfy $b_{2j}(\M)=1$
for all $j\in\{0,\ldots,n\}$. As in particular $b_2(\M)=1$ we can rescale the symplectic form to be integral, namely $[\omega]\in H^2(\M;\Z)$ (or rather the image of this group
in $H^2(\M;\R)$). Consider the prequantization line bundle
$L$ whose first Chern class is $[\omega]$ and observe that it admits an equivariant extension $\LS$. Indeed, since the action is Hamiltonian, the symplectic form can be completed
to an equivariant symplectic form $[\omega + \psi]\in H^2_{S^1}(\M;\Z)$, condition that implies the existence of $\LS$ (see \cite[Theorem 1.1, Corollary 1.2]{HY}). The equivariance of the projection map
$\LS \to \M$ implies that over a fixed point $P$ the complex line $\LS(P)$, the restriction of $\LS$ at $P$, inherits an $S^1$-action; moreover the weight of the $S^1$-representation is exactly given
by $\psi(P)$. By \cite[Proposition 3.4]{T} we have that $\psi(P)\neq \psi(Q)$ for all pairs of distinct fixed points $P$ and $Q$. Finally, as $ \int_\M c_1(L)=\int_\M \omega^n \neq 0$,
 $L$ is quasi-ample in the sense of Hattori \cite[Section 3]{Ha}.
 Moreover, since $H^2(\M;\Z)$ is a one-dimensional lattice and the index of $\M$ is $n+1$, we can rescale further the symplectic form to satisfy $c_1 = (n+1)[\omega]$, and hence the equivariant extensions to satisfy $c_1^{S^1} = (n+1)[\omega + \psi] + a$ for some $a\in \Z$. Restricting the previous expression to the fixed points yields $\sum_{j=1}^n w_j(P)= (n+1)\psi(P)+a$
 for every $P\in \fixed$. We can conclude that $L$ satisfies what Hattori calls condition D (see \cite[page 447]{Ha}). In conclusion
we can apply \cite[Theorem 5.7]{Ha}, that asserts that the $S^1$-action at the fixed point set resembles that of the standard $S^1$-action on $\C P^n$ (see the proof of Proposition \ref{projective space H}), namely, there exist integers $w_1,\ldots,w_n$, such that the weights at the $n+1$ fixed points of the action on $\M$ are exactly those given by the standard linear action on $\C P^n$. Since all Chern numbers can be computed entirely from the weights at the fixed points (see \eqref{chern number weights}), we deduce that
all Chern numbers agree with those of $\C P^n$ with standard complex structure. This, in turns, implies that $\M$ is complex cobordant to $\C P^n$.

To deduce the existence of a homotopy equivalence we apply results of Tolman \cite{T} and Charton \cite{C}.
First of all, \cite[Corollary 3.19]{T} asserts that for all $j\in\{0,\ldots,n\}$,
the group $H^{2j}(\M;\Z)$ is generated by $c_1^{\,j}$ suitably rescaled, and the rescaling factor depends just on the weights at the fixed points. It follows that if the weights are standard --namely they agree with those of the standard action on $\C P^n$-- then the cohomology ring is isomorphic to that of $\C P^n$.
Given that the cohomology rings are isomorphic, the existence of the homotopy equivalence follows then from the results of Charton in \cite{C}. Indeed by Morse theory $\M$ is homotopy equivalent to a CW complex with exactly one cell of dimension $2j$, for all $j\in\{0,\ldots,n\}$, and the conclusion then follows from \cite[Theorem 5.2]{C}.
\end{proof}

We are now ready to give a proof of Theorem \ref{main geometry}.
\begin{proof}[Proof of Theorem \ref{main geometry}]\label{proof main geometry}
If $\M$ is complex cobordant to $\C P^n$, then their elliptic genera are the same; indeed (elliptic) genera only depend on their generating function and the Chern numbers of the
manifold which, for complex cobordant manifolds, are the same. However the elliptic genus of level $n+1$ of $\C P^n$ vanishes, as it is proved\footnote{An alternative proof of this
fact is given by Proposition \ref{toric vanishing}, as $\C P^n$ admits a toric action.} in \cite{HBJ}.

Conversely, suppose that $(\M,\omega,\psi)$ is a compact, connected symplectic manifold of dimension $2n$ and index $\k0=n+1$ which is acted on effectively by a circle in a Hamiltonian way, with moment
map $\psi\colon \M\to \R$ and isolated fixed points. Then $\M$ can be endowed with an almost complex structure $\J$ compatible with $\omega$ and invariant under the $S^1$-action.
Hence, if we assume that the elliptic genus of level $n+1$ vanishes, then $(\M, \J, S^1)$ satisfies the hypotheses of Theorem \ref{main 2}.
Since the action is Hamiltonian and the manifold connected, the number of fixed points with no negative weights is one. Hence the number of fixed points $|\M^{S^1}|$ is exactly
$n+1$. However it is well-known that, if the action has discrete fixed point set, then the Euler characteristic $\chi(\M)$ is precisely $|\M^{S^1}|$.
Thus we can apply Theorem \ref{main 3} to conclude the proof.
\end{proof}

\begin{corollary}
Under the same assumptions as Theorem \ref{main 3}, the $m$-th
Hilbert polynomial $H_m(x)$ of $(\M, \omega, \psi) $ is given by
    $$
    H_m(x)=\dfrac{(-1)^n}{m!(n-m)!}(x-1)\cdot \ldots \cdot (x-(n-m))\cdot(x+1)\cdot \ldots
    \cdot (x+m)\,.
    $$
\end{corollary}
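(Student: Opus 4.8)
The plan is to deduce the formula directly from the complex cobordism conclusion of Theorem \ref{main 3} together with the explicit computation for projective space in Proposition \ref{projective space H}. First I would recall that the values $H_m(k)=\ind\left(L^k\otimes\wedge^mT^*\right)$ of the Hilbert polynomial are, by the Atiyah--Singer index theorem, given by the characteristic number
\[
\int_\M \ch\left(L^k\otimes\wedge^mT^*\right)\,\td(\M),
\]
where, since $\k0=n+1$, the line bundle $L=\left(\wedge^nT^*\right)^{\frac{1}{n+1}}$ satisfies $c_1(L)=-\frac{c_1}{n+1}$. Substituting $c_1(L)=-\frac{c_1}{n+1}$ and expanding the Chern character of $\wedge^mT^*$ in terms of the Chern classes $c_1,\dots,c_n$ of $\M$, the integrand becomes a power series in $c_1,\dots,c_n$ with rational coefficients depending only on $k,m,n$. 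Integrating over $\M$ extracts the degree-$2n$ part, so for every integer $k$ the value $H_m(k)$ is a \emph{universal} rational linear combination of the Chern numbers $C_\lambda(\M)$ (see \eqref{chern number def}), with coefficients independent of the manifold.

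Next I would invoke the theorem of Milnor and Novikov recalled in Subsection \ref{genera}, namely that complex cobordant manifolds have identical Chern numbers. By Theorem \ref{main 3} the manifold $\M$ is complex cobordant to $\C P^n$, whose index is likewise $n+1$, so the very same universal expression computes the Hilbert polynomial of $\C P^n$. Hence $H_m(k)$ for $\M$ agrees with $H_m(k)$ for $\C P^n$ at every integer $k$, and since two polynomials agreeing at all integers coincide, the Hilbert polynomials agree as polynomials in $x$. Finally I would simply quote Proposition \ref{projective space H}, which gives
\[
H_m(x)=\dfrac{(-1)^n}{m!(n-m)!}(x-1)\cdots(x-(n-m))\,(x+1)\cdots(x+m),
\]
thereby finishing the proof.

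The only point requiring care—and it is not a serious obstacle—is to ensure that the line bundle $L$ entering the definition of $H_m$ is the same for $\M$ and for $\C P^n$; this is guaranteed because both manifolds have index exactly $n+1$, so in each case $c_1(L)=-\frac{c_1}{n+1}$ and the defining characteristic-number formula is literally identical. An alternative route, avoiding cobordism altogether, would be to use the statement established in the proof of Theorem \ref{main 3} that the weights at the $n+1$ fixed points of $\M$ coincide with those of the standard circle action on $\C P^n$, and then to apply the $K$-theoretic localization formula \eqref{index K}, which computes $\ind\left(L^k\otimes\wedge^mT^*\right)$ purely in terms of these weights; this reduces the computation word for word to the one carried out for $\C P^n$ in Proposition \ref{projective space H}.
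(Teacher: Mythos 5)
Your proposal is correct and follows essentially the same route as the paper: the paper's proof likewise observes that $\M$ and $\C P^n$ have the same Chern numbers (as established in the proof of Theorem \ref{main 3}), that Hilbert polynomials are determined by Chern numbers, and then quotes the computation of Proposition \ref{projective space H}. Your write-up merely makes explicit the two ingredients the paper leaves implicit, namely the Atiyah--Singer expression of $H_m(k)$ as a universal combination of Chern numbers and the Milnor--Novikov characterization of complex cobordism.
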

\begin{proof}
As argued in the proof of Theorem \ref{main 3}, the
complex projective space $\mathbb{CP}^n$ and $\M$ have the same Chern
numbers. The statement follows because the Hilbert
polynomials of an almost complex manifold are totally determined by
their Chern numbers.
\end{proof}

\section{From geometry to number theory: Relations of Eisenstein series}\label{section number theory proofs}

    In this section we explain how the rigidity theorem for the elliptic genus of level $N$ yields curious identities for products of Eisenstein series for $\Gamma_{1}(N)$. Thereby, we prove Theorem~\ref{main number theory}. As an example, we also give an infinite family of relations coming from the rigidity of $\C P^{n}$.

    Let $(\M,\J)$ be a compact, connected, almost complex manifold of dimension $2n$ whose first Chern class is divisible by $N$. Suppose that a circle $S^{1}$ acts effectively on $\M$, and that the fixed point set $\M^{S^{1}}$ is non-empty and discrete. Let $w_{1}(P),\dots,w_{n}(P) \in \Z$ denote the weights at a fixed point $P \in \M^{S^{1}}$. Our assumption on the fixed point set of the $S^{1}$-action implies that all the weights are non-zero,
    compare to Section~\ref{generalities s1 action}. We now come to the proof of Theorem~\ref{main number theory}.

    \begin{proof}[Proof of Theorem~\ref{main number theory}] Consider the Laurent series
    \[
    F_{N}(x) := \frac{\mathcal Q_{N}(x)}{x},
    \]
    where $\mathcal Q_{N}(x)$ is the power series defined in \eqref{eq Qtau} for the elliptic genus $\varphi_{N}(\M)$ of level $N$.
    The equivariant elliptic genus of level $N$ associated to $\M$ is given for $1 \neq t = e^{2\pi i z} \in S^{1}$ as
    \[
    \varphi_{N}(\M,t) = \sum_{P\in \M^{S^{1}}}F_{N}\left(2\pi i w_{1}(P)z\right)\cdots F_{N}\left(2\pi i w_{n}(P)z\right),
    \]
   compare to Section~\ref{section rigidity}. The rigidity theorem, Theorem~\ref{rigidity}, asserts that $\varphi_{N}(\M,t)$ is actually independent of $t \in S^{1}$. In particular, this implies that the non-constant
   Taylor coefficients of $\varphi_{N}(\M,t)$ around $z = 0$ vanish.
    A short computation
     shows that the Taylor coefficient at $z^{k-n}$ is explicitly given by
    \begin{align}\label{eq taylor coefficient}
    \sum_{I \in P_{n}(k)}G_{I,N}(\tau)\sum_{P \in \M^{S^{1}}}\frac{m_{I}\left(w_{1}(P),\dots,w_{n}(P)\right)}{w_{1}(P)\cdots w_{n}(P)},
    \end{align}
    where  $m_{I}$ is the usual monomial symmetric polynomial. Here we also use Lemma~\ref{lemma eisenstein} which states that the coefficients $a_{k}$ of $\mathcal Q_{N}(x)$ are given by the Eisenstein series $G_{k,N}$.
    By the rigidity theorem, Theorem~\ref{rigidity}, the expression in \eqref{eq taylor coefficient} vanishes for $k > n$, which finishes the proof of Theorem~\ref{main number theory}.
    \end{proof}

For every $I\in P_n(k)$ the coefficient of $G_{I,N}(\tau)$ has the following geometric interpretation.
As $m_I(x_1,\ldots,x_n)$ is symmetric in $x_1,\ldots,x_n$, there exists a polynomial $Q_I(y_1,\ldots,y_n)$ such that
\begin{equation}\label{Q I}
Q_I(e_1,\ldots,e_n)=m_I,
\end{equation}
 where $e_j(x_1,\ldots,x_n)$ is the elementary symmetric polynomial of degree $j$.
By \eqref{restriction chern} we obtain that for every fixed point $P$,
$$
m_I(w_1(P),\ldots,w_n(P))x^{k}=Q_I\left(c_1^{S^1}(P),\ldots,c_n^{S^1}(P)\right),
$$
where $x$ is the variable in \eqref{restriction p}.
Then, by the localization formula \eqref{ABBV}, the coefficient of $G_{I,N}(\tau)$ can be obtained from the integral on $\M$ of the equivariant cohomology class
given by $Q_I(c_1^{S^1},\ldots,c_n^{S^1})\in H_{S^1}^{2k}(\M;\Z)$, namely
\begin{equation}\begin{split}\label{Q integral}
q_I(\M)&:= \int_\M Q_I\left(c_1^{S^1},\ldots,c_n^{S^1}\right)\\
& = \sum_{P \in \M^{S^{1}}}\frac{m_{I}\left(w_{1}(P),\dots,w_{n}(P)\right)}{w_{1}(P)\cdots w_{n}(P)}x^{k-n} \in H_{S^1}^{2\,(k-n)}(pt;\Z).
\end{split}\end{equation}

In the next subsection we give alternative formulas to compute explicitly these coefficients when $\M$ is a coadjoint orbit.
\subsection{Formulas to compute the coefficients \texorpdfstring{$q_I$}{Lg} for coadjoint orbits}\label{section coadjoint orbit}
In this subsection we give additional formulas to compute the coefficients $q_I(\M)$, if $\M$
is a coadjoint orbit. Before doing so we need to recall a few standard facts, whose proofs are omitted here (for more details and proofs we refer the reader to \cite{GHZ}, \cite[Section 4.2]{GSZ}
\cite[Section 6]{ST}, \cite[Section 5.3.1]{GHS} and the references therein).

Let $G$ be a compact simple Lie group with Lie algebra $\mathfrak{g}$. Choose a maximal torus $\T\subset G$, and denote by $\mathfrak{t}$ its Lie algebra.
We can embed $\mathfrak{t}^*$ in $\mathfrak{g}^*$ by means of a positive definite symmetric $G$-invariant linear form on $\mathfrak{g}$.
Let $P_0\in \mathfrak{t}^*$ and consider the orbit of $P_0$ in $\mathfrak{g}^*$ under the coadjoint action, $\mathcal{O}_{P_0}=G\cdot P_0$. Then it is well-known that
$\mathcal{O}_{P_0}$ can be endowed with the so-called Kostant-Kirillov symplectic form $\omega$. The natural action of the maximal torus $\T$ on $\mathcal{O}_{P_0}$
is indeed Hamiltonian with moment map given by the composition of the inclusion $\mathcal{O}_{P_0}\hookrightarrow \mathfrak{g}^*$ followed by the projection $\mathfrak{g}^* \to \mathfrak{t}^*$. Moreover the fixed point set is discrete. In order to describe the fixed point set data, which includes the weights, we need to introduce  more terminology.

Let $R\subset \mathfrak{t}^*$ be the set of roots, $R^+$ a choice of positive roots and $R_0\subset R^+$ the simple roots. The Weyl group $W$ of $G$ is generated by the reflections
through the hyperplanes orthogonal to the simple roots; we denote such reflections by $s_{\alpha}\colon \mathfrak{t}^* \to \mathfrak{t}^*$, with $\alpha\in R_0$, and the hyperplane orthogonal to $\alpha$ by $H_{\alpha}$. Moreover, if $P\in \mathfrak{t}^*$ and $w\in W$, we denote the point $P$ ``moved'' by $w$ with $w(P)$.
 Let $J$ be a (possibly empty) subset of simple roots; denote by $\langle J\rangle$ the set of positive roots that can be expressed as linear combinations of
roots in $J$ and by $W_J$ the subgroup of the Weyl group generated by reflections $s_{\alpha}$ with $\alpha \in J$.
Suppose that the chosen point $P_0\in \mathfrak{t}^*$ above lies in the (possibly empty) intersection of hyperplanes $\cap_{\alpha \in J}H_{\alpha}$ and is generic in this intersection, meaning that $s_{\alpha}(P_0)\neq P_0$ for all $\alpha\in R^+ \setminus \langle J \rangle$.
Denote by $W/W_J$ the set of right cosets, namely
$
W/W_J :=\{w \, W_J: w\in W\}
$, and by $[w]$ its elements. Then the following lemma holds (see in particular \cite[Section 5.3.1]{GHS})
\begin{lemma}\label{coadjoint fixed}
The map
$
W/W_J\to \mathfrak{t}^*
$
assigning $w(P_0)$ to $[w]$ is well-defined and, if restricted to the image, defines a bijection between $W/W_J$ and the fixed point set $\mathcal{O}_{P_0}^\T$.
Moreover the set of weights at $w(P_0)$ is given by $w(R^+\setminus \langle J \rangle)=\{w(\alpha): \alpha \in R^+ \setminus \langle J \rangle\}$.
\end{lemma}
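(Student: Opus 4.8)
The plan is to reduce the statement to the standard combinatorics of the Weyl-group orbit of $P_0$, treating the fixed-point identification, the bijection, and the weight computation in turn.

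\textbf{Identifying the fixed points.} First I would determine $\mathcal{O}_{P_0}^\T$. Using the root space decomposition $\mathfrak{g}=\mathfrak{t}\oplus\bigoplus_{\alpha\in R}\mathfrak{g}_\alpha$ and the dual decomposition of $\mathfrak{g}^*$, the torus $\T$ acts trivially on $\mathfrak{t}^*$ and with non-zero weight $\pm\alpha$ on the summand dual to $\mathfrak{g}_{\pm\alpha}$; hence $(\mathfrak{g}^*)^\T=\mathfrak{t}^*$ and therefore $\mathcal{O}_{P_0}^\T=\mathcal{O}_{P_0}\cap\mathfrak{t}^*$. By the standard fact that two elements of $\mathfrak{t}^*$ lie in the same coadjoint $G$-orbit if and only if they lie in the same $W$-orbit, this intersection equals $W\cdot P_0=\{w(P_0):w\in W\}$.

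\textbf{The bijection.} Next I would identify the stabiliser $\mathrm{Stab}_W(P_0)$. A point stabiliser in $W$ is generated by the reflections $s_\alpha$ that fix it, i.e.\ by those $s_\alpha$ with $\langle\alpha,P_0\rangle=0$, equivalently $P_0\in H_\alpha$. Since $P_0\in\bigcap_{\alpha\in J}H_\alpha$ the roots in $\langle J\rangle$ vanish on $P_0$, while the genericity hypothesis $s_\alpha(P_0)\neq P_0$ for $\alpha\in R^+\setminus\langle J\rangle$ guarantees no other positive roots do; thus the roots vanishing on $P_0$ are exactly $\pm\langle J\rangle$ and $\mathrm{Stab}_W(P_0)=W_J$. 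Consequently the assignment $[w]\mapsto w(P_0)$ is well defined on $W/W_J$ and restricts to a bijection $W/W_J\to W\cdot P_0=\mathcal{O}_{P_0}^\T$.

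\textbf{The weights.} For the weights I would compute the isotropy representation. Since $T_{P_0}\mathcal{O}_{P_0}\cong\mathfrak{g}/\mathfrak{g}_{P_0}$ and genericity gives $\mathfrak{g}_{P_0}=\mathfrak{t}\oplus\bigoplus_{\alpha\in\pm\langle J\rangle}\mathfrak{g}_\alpha$, the tangent space decomposes under $\T$ as $\bigoplus_{\alpha\in R\setminus\pm\langle J\rangle}\mathfrak{g}_\alpha$, with weights occurring in pairs $\pm\alpha$. The $G$-invariant complex structure compatible with the Kostant--Kirillov form (equivalently the one coming from the identification $\mathcal{O}_{P_0}\cong G_\C/P$ with a parabolic $P$) singles out the holomorphic tangent space $\bigoplus_{\alpha\in R^+\setminus\langle J\rangle}\mathfrak{g}_\alpha$, so the weights of the $\T$-action at $P_0$ are exactly $R^+\setminus\langle J\rangle$. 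Since $w\in W$ acts equivariantly, carrying $P_0$ to $w(P_0)$ and acting on $\mathfrak{t}^*$ by the Weyl group action, transporting this computation gives that the weights at $w(P_0)$ are $w(R^+\setminus\langle J\rangle)$, as claimed.

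\textbf{Main obstacle.} The delicate step is the last one: as a real $\T$-representation the normal directions only determine weights up to sign, so one must invoke the compatible invariant complex structure to pick out the correct representative in each pair $\pm\alpha$ and to match the sign convention for weights fixed in Section~\ref{generalities s1 action}. Checking that this choice is consistently $R^+\setminus\langle J\rangle$ at $P_0$ (and not its negative), together with the standard result that point stabilisers in $W$ are generated by the reflections fixing the point, are the two places where genuine care is required; the remaining steps are formal consequences of the weight decomposition of $\mathfrak{g}^*$.
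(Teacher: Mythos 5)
The paper itself does not prove this lemma: it is recalled as a standard fact whose proof is explicitly omitted, with pointers to \cite{GHZ}, \cite[Section 4.2]{GSZ}, \cite[Section 6]{ST} and \cite[Section 5.3.1]{GHS}. Your argument is essentially the standard proof found in such references, and it is correct in all three steps: the identification $\mathcal{O}_{P_0}^\T=\mathcal{O}_{P_0}\cap\mathfrak{t}^*=W\cdot P_0$ via the weight decomposition of $\mathfrak{g}^*$ and the fact that $G$-conjugacy of elements of $\mathfrak{t}^*$ coincides with $W$-conjugacy; the computation $\mathrm{Stab}_W(P_0)=W_J$, where you correctly invoke the theorem that point stabilizers in a finite reflection group are generated by the reflections they contain (see \cite[Theorem 1.12]{Hum}) together with the genericity hypothesis, plus the standard fact that the reflections $s_\alpha$ with $\alpha\in\langle J\rangle$ generate precisely $W_J$; and the weight computation via the isotropy representation $\mathfrak{g}/\mathfrak{g}_{P_0}$. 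Two refinements are worth recording. First, your conclusion that the holomorphic tangent space at $P_0$ is $\bigoplus_{\alpha\in R^+\setminus\langle J\rangle}\mathfrak{g}_\alpha$ (and not its negative, nor a mixture of signs) is valid only when $P_0$ lies in the closed positive Weyl chamber; this dominance assumption is left implicit in the paper's statement but is genuinely needed, since for a non-dominant $P_0$ the invariant K\"ahler structure produces the weights $\{\alpha\in R\setminus\pm\langle J\rangle:\langle P_0,\alpha\rangle>0\}$, which form a $W$-translate of $R^+\setminus\langle J\rangle$ rather than that set itself. You correctly single this out as the delicate point. Second, the ``equivariance'' used to transport the computation from $P_0$ to $w(P_0)$ should be phrased via a representative $n_w\in N(\T)$ of $w$: the map $x\mapsto n_w\cdot x$ is holomorphic because the complex structure is $G$-invariant, and it intertwines the $\T$-action with its twist by conjugation by $n_w$, which is exactly what converts the weights at $P_0$ into their $w$-images at $w(P_0)$.
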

Note that the above lemma also implies that the (real) dimension of $\mathcal{O}_{P_0}$ is $2 |R^+ \setminus \langle J \rangle|$.

With Lemma \ref{coadjoint fixed} it is in principle possible to compute the coefficients $q_I(\M)$, if $\M$ is a coadjoint orbit. However there is an alternative formula
that makes use of the divided difference operators, which is given in the proposition below. Before stating it we recall the following.
Denote by $\mathbb{S}(\mathfrak{t}^*)$ the symmetric algebra on $\mathfrak{t}^*$, which can be identified with the polynomials with complex coefficients
in the variables $x_1,\ldots,x_m$, where $\{x_1,\ldots,x_m\}$ is a $\Z$-basis of the dual of the integral lattice of $\mathfrak{t}^*$. We extend the $W$-action on $\mathfrak{t}^*$
to an action on the elements of  $\mathbb{S}(\mathfrak{t}^*)$ in the natural way. Let $\alpha_1,\ldots,\alpha_m \in R_0$ be the simple roots and for every
$j\in\{1,\ldots,m\}$ define $\partial_j \colon \mathbb{S}(\mathfrak{t}^*) \to  \mathbb{S}(\mathfrak{t}^*)$ to be
$$
\partial_j P := \frac{P-s_j(P)}{\alpha_j}\,,
$$
where $s_j$ is the simple reflection $s_{\alpha_j}$. This is called the \emph{divided difference operator} associated to $s_j:=s_{\alpha_j}$.
Given $w\in W$ and a reduced expression of $w$ in terms of simple roots, $w=s_{j_1}s_{j_2}\cdots s_{j_l}$, we define
$\partial_w := \partial_{j_1}\circ\partial_{j_2}\circ\cdots \circ\partial_{j_l}$. It is easy to see that such map does not depend on the reduced expression chosen for $w$, and is therefore called
the \emph{divided difference operator associated to }$w$.

We also recall that on the elements of the Weyl group there is a well-defined \emph{length function}, namely given a reduced expression of $w$ in terms of simple reflections
$s_{j_1}s_{j_2}\cdots s_{j_l}$, the number of simple reflections involved is independent on the reduced expression chosen for $w$, is called the {\it length} of $w$
and is denoted by $l(w)$. Moreover, given a right coset $[w]=\{ww': w'\in W_J\}$, we recall that there is a unique element in $[w]$ with minimal length --called \emph{minimal coset representative}-- and we can define
$l([w])$ as the length of that element. Finally, there is a unique equivalence class $[\overline{w}]$ in $W/W_J$ with maximal length, and this length coincides with $|R^+\setminus \langle J \rangle|$ (see \cite[Sections 1.6--1.10]{Hum}). Thus we have
$$
l([\overline{w}])=|R^+\setminus \langle J \rangle|=\frac{\dim\left(\mathcal{O}_{P_0}\right)}{2}=: n.
$$

\begin{prop}\label{formula divided}
Let $\mathcal{O}_{P_0}$ be a coadjoint orbit of a compact simple Lie group $G$, where $P_0$ is a generic point in the intersection of hyperplanes
$\cap_{\alpha\in J} H_\alpha$ for some $J\subset R_0$. Let $2n$ be the dimension of $\mathcal{O}_{P_0}$.
Denote by $[\overline{w}]$ the element with maximal length in $W/W_J$, where $\overline{w}$ is a minimal coset
representative. Then for every $I\in P_n(k)$
we have that
$$
q_I(\mathcal{O}_{P_0})= \partial_{\overline{w}} \,m_I\left(R^+ \setminus \langle J \rangle\right)\,,
$$
where $m_I$ is the monomial symmetric polynomial in the variables given by the roots in $R^+\setminus \langle J \rangle$.
\end{prop}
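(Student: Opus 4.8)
The plan is to evaluate $q_I(\mathcal{O}_{P_0})$ by equivariant localization and then to recognize the resulting sum as a divided difference operator applied to $m_I$. Throughout I compute the $\T$-equivariant version of \eqref{Q integral}, so that $q_I(\mathcal{O}_{P_0}) = \int_{\mathcal{O}_{P_0}} Q_I(c_1^{\T},\dots,c_n^{\T}) \in \mathbb{S}(\mathfrak{t}^{*})$; this is forced by the statement, since the right-hand side $\partial_{\overline{w}}\,m_I(R^+\setminus\langle J\rangle)$ lives in $\mathbb{S}(\mathfrak{t}^{*})$. First I would apply the Atiyah--Bott--Berline--Vergne localization formula for the $\T$-action. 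By Lemma \ref{coadjoint fixed} the fixed points are indexed by the cosets $[w]\in W/W_J$ (namely the points $w(P_0)$), and the weights at $[w]$ are $\{w(\alpha):\alpha\in R^+\setminus\langle J\rangle\}$. Using the torus analogue of \eqref{restriction chern}, the restriction of $Q_I(c_1^{\T},\dots,c_n^{\T})$ to $[w]$ equals the value of $m_I$ at these weights, while the equivariant Euler class of the tangent space at $[w]$ is $\prod_{\alpha\in R^+\setminus\langle J\rangle}w(\alpha)$. Writing $f:=m_I(R^+\setminus\langle J\rangle)$ and $\delta_J:=\prod_{\alpha\in R^+\setminus\langle J\rangle}\alpha$, and using that $W$ acts on $\mathbb{S}(\mathfrak{t}^{*})$ by linear substitution (so $m_I(w(\alpha_1),\dots,w(\alpha_n))=w(f)$), localization yields
\[
q_I(\mathcal{O}_{P_0}) = \sum_{[w]\in W/W_J}\frac{w(f)}{w(\delta_J)}.
\]
I would first check the summand depends only on the coset: if $w_J\in W_J$, then $W_J$ permutes $R^+\setminus\langle J\rangle$, so $w_J(\delta_J)=\delta_J$ and $w_J(f)=f$ (as $m_I$ is symmetric); the same permutation property shows $f$ is $W_J$-invariant.

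It then remains to prove the parabolic identity $\sum_{[w]}w(f)/w(\delta_J)=\partial_{\overline{w}}(f)$ for any $W_J$-invariant $f$. I would reduce this to the classical Bernstein--Gelfand--Gelfand formula $\partial_{w_0}(g)=\sum_{v\in W}v(g)/v(\delta)$, where $w_0$ is the longest element of $W$ and $\delta=\prod_{\alpha\in R^+}\alpha$ (classical, and also provable by induction on $l(w_0)$). Writing $\delta=\delta_J\,\delta^J$ with $\delta^J:=\prod_{\alpha\in\langle J\rangle}\alpha$ and applying the formula to $g=f\,\delta^J$ gives
\[
\partial_{w_0}(f\delta^J)=\sum_{v\in W}\frac{v(f)\,v(\delta^J)}{v(\delta_J)\,v(\delta^J)}=\sum_{v\in W}\frac{v(f)}{v(\delta_J)}.
\]
Grouping $v=\overline{v}\,w_J$ over cosets and using $w_J(f)=f$, $w_J(\delta_J)=\delta_J$ as above, each coset contributes $|W_J|$ equal terms, so $\partial_{w_0}(f\delta^J)=|W_J|\sum_{[w]}w(f)/w(\delta_J)$.

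On the other hand, $\overline{w}=w_0\,w_{0,J}$ with $w_{0,J}$ the longest element of $W_J$, and $l(w_0)=l(\overline{w})+l(w_{0,J})$; hence a reduced word for $w_0$ concatenates reduced words for $\overline{w}$ and $w_{0,J}$, giving $\partial_{w_0}=\partial_{\overline{w}}\circ\partial_{w_{0,J}}$. Since $f$ is $W_J$-invariant, the Leibniz rule $\partial_j(fg)=f\,\partial_j(g)$ (valid when $f$ is $s_j$-invariant) iterates to $\partial_{w_{0,J}}(f\delta^J)=f\,\partial_{w_{0,J}}(\delta^J)$, and $\partial_{w_{0,J}}(\delta^J)=\sum_{w_J\in W_J}w_J(\delta^J)/w_J(\delta^J)=|W_J|$ by the BGG formula applied to the root subsystem $\langle J\rangle$. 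Therefore $\partial_{w_0}(f\delta^J)=|W_J|\,\partial_{\overline{w}}(f)$, and comparing with the previous display yields $\partial_{\overline{w}}(f)=\sum_{[w]}w(f)/w(\delta_J)=q_I(\mathcal{O}_{P_0})$, as claimed.

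The routine parts are the localization computation and the coset bookkeeping; the genuine input is the BGG formula together with the factorization $\partial_{w_0}=\partial_{\overline{w}}\circ\partial_{w_{0,J}}$ and the invariance/Leibniz reductions. The main obstacle I anticipate is stating the $W_J$-stability facts crisply — that $W_J$ permutes $R^+\setminus\langle J\rangle$ (making $f$ $W_J$-invariant and $\delta_J$ $W_J$-fixed) and that lengths add in $w_0=\overline{w}\,w_{0,J}$ — since these underlie both the well-definedness over cosets and the operator factorization. All the required facts about $W/W_J$, minimal coset representatives, and lengths are available from \cite{Hum} and from the discussion preceding the proposition.
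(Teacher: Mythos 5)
Your proof is correct, but it follows a genuinely different route from the paper's. The paper works inside the $\T$-equivariant cohomology ring of $\mathcal{O}_{P_0}$: it notes that $Q_I(c_1^\T,\ldots,c_n^\T)$ is an \emph{invariant class}, expands it in the basis of equivariant Schubert classes $\tau_w$ using Proposition 6.1 of \cite{GSZ} (the coefficient of $\tau_w$ is $(-1)^{l(w)}\partial_w f$), and integrates, using that $\int_{\mathcal{O}_{P_0}}\tau_w=0$ unless $w=\overline{w}$ while $\int_{\mathcal{O}_{P_0}}\tau_{\overline{w}}=(-1)^{l(\overline{w})}$; moreover, the parabolic case $J\neq\emptyset$ is only sketched there (``mutatis mutandis''). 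You instead localize directly, obtaining $q_I(\mathcal{O}_{P_0})=\sum_{[w]\in W/W_J} w(f)/w(\delta_J)$ with $f=m_I(R^+\setminus\langle J\rangle)$ and $\delta_J=\prod_{\alpha\in R^+\setminus\langle J\rangle}\alpha$, and then prove the resulting purely Weyl-group identity by reducing to the classical Bernstein--Gelfand--Gelfand formula $\partial_{w_0}(g)=\sum_{v\in W}v(g)/v(\delta)$, combined with the length-additive factorization $w_0=\overline{w}\,w_{0,J}$ (hence $\partial_{w_0}=\partial_{\overline{w}}\circ\partial_{w_{0,J}}$), the Leibniz rule for $s_j$-invariant factors, and $\partial_{w_{0,J}}(\delta^J)=|W_J|$; your $W_J$-invariance checks (that $W_J$ permutes $R^+\setminus\langle J\rangle$, hence fixes $f$ and $\delta_J$) and the length facts are exactly the standard statements from \cite{Hum} needed to make the coset bookkeeping and the operator factorization legitimate. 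What your route buys: it is self-contained modulo the classical BGG identity, avoids the equivariant Schubert calculus and the external input from \cite{GSZ}, and handles the general parabolic case uniformly and in full rather than by analogy. What the paper's route buys: it frames the computation conceptually within equivariant Schubert calculus (the integral is the pairing of an invariant class against the top Schubert class), which is the machinery the surrounding subsection is built on.
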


\begin{proof}
The proof of this fact makes use of the $\T$-equivariant cohomology ring structure of $\mathcal{O}_{P_0}$ and in particular of the so-called invariant classes and the equivariant Schubert classes (see \cite[Section 6]{ST} and \cite[Subsection 6.1]{GSZ}).

We begin with the case in which $J=\emptyset$. Then $P_0$ is a generic point in $\mathfrak{t}^*$ and the coadjoint orbit is also called generic.
Let us recall what invariant classes are. Given a polynomial $f\in \mathbb{S}(\mathfrak{t}^*)$ of degree $P$ we want to produce an element of
$H_\T^{2P}(\mathcal{O}_{P_0})$, called  invariant class (associated to $f$), in the following way. Let $c_f$ be the map from $W$ to $\mathbb{S}(\mathfrak{t}^*)$
defined by $w \mapsto w(f)$. Then it can be checked (see \cite{GSZ}) that such a map belongs to the image of the pull-back in equivariant cohomology
induced by the inclusion $(\mathcal{O}_{P_0})^\T \hookrightarrow \mathcal{O}_{p_0}$
$$
H_\T^*\left(\mathcal{O}_{P_0};\C\right) \to H_\T^*\left(\left(\mathcal{O}_{P_0}\right)^\T;\C\right)= \bigoplus_{P\in \left(\mathcal{O}_{P_0}\right)^\T} H^*_\T\left(pt;\C\right)= \bigoplus _{w\in W}\mathbb{S}\left(\mathfrak{t}^*\right).
$$
Abusing notation, we denote the corresponding element in $H_\T^*(\mathcal{O}_{P_0};\C)$ by $c_f$ too.
For instance, from \eqref{restriction chern} and Lemma \ref{coadjoint fixed} it is easy to see that the (restrictions to the fixed point set of) equivariant Chern classes are indeed invariant, and
hence all classes of the form $Q(c_1^\T,\ldots,c_n^\T)$, where $Q$ is a polynomial. In particular the class $Q_I(c_1^\T,\ldots,c_n^\T)$ is an invariant class, where $Q_I$ is the polynomial
associated to $m_I$ defined in \eqref{Q I}. By \eqref{Q integral} we need to find the integral of this class on $\mathcal{O}_{P_0}$.
For this purpose we use the (special) expression of an invariant class in terms of equivariant Schubert classes. We first recall what the latter are.
For every $w\in W$, the equivariant Schubert class at $w$, denoted by $\tau_w$, is characterized to be the unique equivariant cohomology
class in $H_\T^{2l(w)}(\mathcal{O}_{P_0};\C)$ satisfying\\
(i) $\tau_w(w)=\prod \{\alpha\in R^+: w^{-1}(\alpha)\in -R^+\}$;\\
(ii) $\tau_w(w')=0$ for all $w'\in W\setminus \{w\}$ such that $l(w')\leq l(w)$.
\\
\noindent
It is well-known that the set of these classes forms a basis of $H_\T^*(\mathcal{O}_{P_0};\C)$ as a module over $\mathbb{S}(\mathfrak{t}^*)$. Thus for every class
$c\in H_\T^*(\mathcal{O}_{P_0};\C)$ and every $w\in W$ there exists a polynomial $a_w\in \mathbb{S}(\mathfrak{t}^*)$ such that $c=\sum_{w\in W} a_w \tau_w$.
The coefficients $a_w$ have a special expression in the case in which $c$ is an invariant class. Indeed, given $f\in \mathbb{S}(\mathfrak{t}^*)$ and the corresponding invariant class
$c_f$, Proposition 6.1 in \cite{GSZ} asserts that $a_w=(-1)^{l(w)}\partial_w f$.

To conclude the proof of the proposition we note that $\int_{\mathcal{O}_{P_0}}\tau_w=0$ unless $w$ is the longest element $\overline{w}$ in $W$, in which case the localization formula in equivariant cohomology gives that $\int_{\mathcal{O}_{P_0}}\tau_{\overline{w}}=(-1)^{l(\overline{w})}$. (This is because the set of weights $\{\alpha\in R^+: \overline{w}^{-1}(\alpha)\in -R^+\}$ coincides with minus
the isotropy weights at $\overline{w}$, and the restriction of the class $\tau_{\overline{w}}$ at the elements of $W\setminus\{\overline{w}\}$ is zero.)
Hence, since the invariant class $Q_I(c_1^\T,\ldots,c_n^\T)$ at $P_0$ is given by $m_I(R^+)$,
equation \eqref{Q integral} gives
$$
q_I\left(\mathcal{O}_{P_0}\right)=\int_{\mathcal{O}_{P_0}} Q_I\left(c_1^\T,\ldots,c_n^\T\right)=\sum_{w\in W} (-1)^{l(w)}\partial_w m_I(R^+)\int_{\mathcal{O}_{P_0}}\tau_w=\partial_{\overline{w}} \,m_I\left(R^+\right).
$$
We only hint at the proof of this proposition in the case in which $P_0$ is not generic: for partial coadjoint orbits, equivariant Schubert classes are also defined and mutatis mutandis satisfy properties (i) and (ii) above. Moreover
\cite[Proposition 6.1]{GSZ} admits a generalization to partial coadjoint orbits, and the claim follows similarly.
\end{proof}

\begin{exm}\label{example coadjoint} \ \\
\noindent
(1) For $\mathrm{SU}(n+1)$ the root system is of type $A_n$ and a choice of simple roots is given by $R_0=\{x_1-x_2,x_2-x_3,\ldots,x_n-x_{n+1}\}$.
If $J=R_0\setminus\{x_1-x_2\}$ it is well-known that the corresponding coadjoint orbit is $\C P^n$. In this case the longest equivalence class in $W/W_J$ has minimal
coset representative given by $s_ns_{n-1}\cdots s_1$, where $s_j:=s_{x_j-x_{j+1}}$ for all $j\in\{1,\ldots,n\}$; moreover $R^+\setminus \langle J \rangle=\{x_1-x_2,x_1-x_3,\ldots,x_1-x_n\}$. Hence for every partition $I\in P_n(k)$ Proposition \ref{formula divided} gives
$$
q_I(\C P^n)= \partial_n \partial_{n-1} \cdots  \partial_1 m_I(x_1-x_2,x_1-x_3,\ldots,x_1-x_n)\,.
$$
\\
\noindent
(2) For $\mathrm{SO}(2n+1)$ the roots system is of type $B_n$ and a choice of simple roots is given by $R_0=\{x_1-x_2,\ldots,x_{n-1}-x_n\}\cup \{x_n\}$.
If $J=R_0\setminus\{x_1-x_2\}$ it is well-known that the corresponding coadjoing orbit is $\mathrm{Gr}_2^+(\R^{2n+1})$, the Grassmannian of oriented two planes in $\R^{2n+1}$.
This is a symplectic manifold of dimension $2(2n-1)$.
Let $s_j$ be $s_{x_j-x_{j+1}}$ for all $j\in\{1,\ldots,n-1\}$ and $s_n:=s_{x_n}$. Then the longest equivalence class in $W/W_J$ has minimal coset representative given by
$s_1\cdots s_{n-1}s_ns_{n-1}\cdots s_1$ and $R^+\setminus \langle J \rangle = \{x_1-x_2,\ldots,x_1-x_n,x_1+x_2,\ldots,x_1+x_n,x_1\}$. Then
Proposition \ref{formula divided} gives in this case that for every partition $I\in P_{2n-1}(k)$
\begin{align*}
&q_I\left(\mathrm{Gr}_2^+\left(\R^{2n+1}\right)\right)\\
&=\partial_1\cdots \partial_{n-1}\partial_n\partial_{n-1}\cdots \partial_1 m_I(x_1-x_2,\ldots,x_1-x_n,x_1+x_2,\ldots,x_1+x_n,x_1)\,.
\end{align*}
\end{exm}

\subsection{Explicit computation of relations among Eisenstein series}\label{explicit}
In this subsection we show that the manifold $\C P^n$ and the
vanishing of its elliptic genus $\varphi_{N}(\C P^{n},t)$, for every
$N \mid (n+1)$, can be used to derive explicit relations among
Eisenstein series for every $n\geq 1$.

    \begin{prop}\label{proposition general relation CPn}
            For $N\geq 2$ dividing  $(n+1)$ and $k \geq n$ we have
            \begin{align*}
            &(-1)^{n+k+1}\sum_{\substack{0 \leq j_{1},\dots,j_{n} \leq k \\ j_{1} + \ldots + j_{n} =k}}G_{j_{1},N}\cdots G_{j_{n},N} =
                \sum_{\ell = 0}^{n-1}\binom{k-\ell-1}{n-\ell-1}G_{k-\ell,N}\sum_{\substack{0 \leq j_{1},\dots,j_{n}\leq \ell \\ j_{1}+\ldots +j_{n} = \ell}}G_{j_{1},N}\cdots G_{j_{n},N}.
            \end{align*}
    \end{prop}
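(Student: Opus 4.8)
The plan is to feed the rigidity and vanishing of the equivariant elliptic genus of $\C P^n$ into the localization formula and then coalesce the weights. Since $\C P^n$ has index $n+1$ and carries a toric action, Proposition~\ref{toric vanishing} gives $\varphi_N(\C P^n)=0$ for every $N\mid (n+1)$, and the rigidity theorem (Theorem~\ref{rigidity}) upgrades this to $\varphi_N(\C P^n,t)\equiv 0$. I will use the action $\lambda\cdot[z_0:\dots:z_n]=[\lambda^{a_0}z_0:\dots:\lambda^{a_n}z_n]$ for distinct integers $a_0,\dots,a_n$, whose fixed points are the coordinate points $P_0,\dots,P_n$ with weights $\{a_j-a_i:j\neq i\}$ at $P_i$. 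Setting $F_N(x):=\mathcal{Q}_N(x)/x$ and $u:=2\pi i z$, the fixed-point expression from the proof of Theorem~\ref{main number theory} reads
\[
0=\varphi_N(\C P^n,t)=\sum_{i=0}^n\ \prod_{j\neq i}F_N\bigl((a_j-a_i)u\bigr).
\]
Throughout I write $\mathcal{Q}_N(x)=\sum_{j\ge 0}G_{j,N}\,x^{j}$ (using Lemma~\ref{lemma eisenstein} and the convention $G_{0,N}:=1$), so that $\mathcal{Q}_N(x)^n=\sum_{\ell\ge 0}A_\ell\,x^{\ell}$ with $A_\ell=\sum_{j_1+\dots+j_n=\ell}G_{j_1,N}\cdots G_{j_n,N}$.

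Next I would specialise $a_0=0$ and let $a_1,\dots,a_n$ collapse to a common value $c\neq 0$ (keeping them distinct by a parameter $\varepsilon\to 0$). The $i=0$ summand tends to $F_N(cu)^n=\mathcal{Q}_N(cu)^n/(cu)^n$, while the remaining $n$ summands organise into a divided difference: factoring out the denominators and using $\mathcal{Q}_N(0)=1$ shows that $\sum_{i\ge 1}\prod_{j\neq i}F_N((a_j-a_i)u)$ equals $(-1)^{n-1}u^{-(n-1)}$ times the $(n-1)$-st divided difference, at the nodes $a_1,\dots,a_n$, of $h(s):=F_N(-su)\prod_{i=1}^n\mathcal{Q}_N((a_i-s)u)$. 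By the contour representation of divided differences this limit is $\tfrac{(-1)^{n-1}}{u^{n-1}(n-1)!}\,\partial_s^{\,n-1}\big|_{s=c}\big(F_N(-su)\,\mathcal{Q}_N((c-s)u)^n\big)$. Performing the substitution $v=(c-s)u$ and writing $w:=cu$ collapses all $u$-dependence and turns $\varphi_N(\C P^n,t)=0$ into the purely formal relation
\[
\mathcal{Q}_N(w)^n=-\,w^{\,n}\,[v^{\,n-1}]\!\left(\frac{\mathcal{Q}_N(v-w)}{v-w}\,\mathcal{Q}_N(v)^n\right),
\]
where $[v^{\,n-1}](\,\cdot\,)$ denotes extraction of the coefficient of $v^{n-1}$. (Equivalently, this says that the residues at $v=0$ and $v=w$ of $\tfrac{\mathcal{Q}_N(v-w)\mathcal{Q}_N(v)^n}{(v-w)v^n}$ cancel, which is exactly what localisation on $\C P^n$ predicts once the points $P_1,\dots,P_n$ have merged.)

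Finally I would read off the coefficient of $w^k$. Expanding $\mathcal{Q}_N(v)^n=\sum_\ell A_\ell v^\ell$ and $\frac{\mathcal{Q}_N(v-w)}{v-w}=\sum_{m\ge 0}G_{m,N}(v-w)^{m-1}$ gives
\[
[v^{\,n-1}]\!\left(\frac{\mathcal{Q}_N(v-w)}{v-w}\,\mathcal{Q}_N(v)^n\right)=\sum_{\ell=0}^{n-1}A_\ell\sum_{m\ge 0}G_{m,N}\binom{m-1}{n-1-\ell}(-w)^{m-n+\ell},
\]
so comparing the coefficient of $w^k$ on both sides (set $m=k-\ell$) and using $(-1)^{k-n}=(-1)^{k+n}$ and $\binom{k-\ell-1}{n-1-\ell}=\binom{k-\ell-1}{n-\ell-1}$ yields
\[
(-1)^{n+k+1}A_k=\sum_{\ell=0}^{n-1}\binom{k-\ell-1}{n-\ell-1}G_{k-\ell,N}\,A_\ell .
\]
Substituting the expansions $A_k=\sum_{j_1+\dots+j_n=k}G_{j_1,N}\cdots G_{j_n,N}$ and likewise for $A_\ell$ (the constraints $j_p\le k$, $j_p\le\ell$ being automatic from $\sum_p j_p=k,\ell$) recovers exactly the claimed identity. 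The step I expect to be most delicate is the confluence limit in the second paragraph: the individual fixed-point contributions blow up like $\varepsilon^{-(n-1)}$ as the weights merge, and only their sum remains finite, so the passage to the limit must be justified through the divided-difference (equivalently contour-integral) form rather than termwise; checking that the resulting relation is genuinely independent of $u$, so that matching powers of $w$ is legitimate, is the other point needing care, but both become transparent once the divided difference is written as a contour integral around the coalescing nodes.
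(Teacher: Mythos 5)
Your proposal is correct in substance --- the divided-difference identity, the substitution $v=(c-s)u$, $w=cu$, and the final coefficient extraction all check out and do reproduce the stated relation --- but it is a genuinely different proof from the paper's. The paper's proof is purely number-theoretic: it first shows the relation is equivalent to the vanishing of the coefficient of $x^{k}$ in $P_{n,k}(x)\mathcal Q_{N}(x)^{n}$, where $P_{n,k}(x)=\frac{1}{(k-n)!}x^{k-n+1}\frac{d^{k-n}}{dx^{k-n}}\frac{\mathcal Q_{N}(x)}{x}$, then identifies that coefficient (up to a nonzero factor) with the residue at $x=0$ of $\left(\frac{d^{k-n}}{dx^{k-n}}\frac{\vartheta\left(x-\frac{1}{N}\right)}{\vartheta(x)}\right)\left(\frac{\vartheta\left(x-\frac{1}{N}\right)}{\vartheta(x)}\right)^{n}$, and kills that residue by observing that this function is elliptic for the lattice $\Z\tau+\Z$ precisely when $N\mid(n+1)$ (the multiplier $\zeta_{N}^{-(n+1)}$ picked up under $x\mapsto x+\tau$ is then trivial) and has its unique pole at $x=0$, so the residue theorem on the torus applies. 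In the paper, rigidity and localization serve only to guess the coefficients; the written proof uses no geometry at all. You instead take Theorem~\ref{rigidity} and Proposition~\ref{toric vanishing} as genuine inputs and extract the identity from the fixed-point sum for $\C P^{n}$ by coalescing weights; this buys an honest derivation of the relation from the mechanism of Theorem~\ref{main number theory} (including the case $k=n$, since you use full vanishing rather than mere rigidity), at the cost of invoking the deep rigidity theorem and a confluence limit. Your parenthetical remark that the relation amounts to cancellation of the residues of $\frac{\mathcal Q_{N}(v-w)\mathcal Q_{N}(v)^{n}}{(v-w)v^{n}}$ at $v=0$ and $v=w$ is exactly the localization-side shadow of the paper's one-pole ellipticity argument.

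There is, however, one step that needs an argument you do not supply, and it is not the one you flag. Rigidity furnishes $\sum_{i=0}^{n}\prod_{j\neq i}F_{N}\left((a_{j}-a_{i})u\right)=0$ only for distinct \emph{integers} $a_{0},\dots,a_{n}$, because only then is there an actual circle action on $\C P^{n}$; your confluence $a_{i}\to c$ runs through non-integer weight configurations, where no geometric input exists. The gap is fillable: the expression is invariant under $(a_{0},\dots,a_{n};u)\mapsto(\lambda a_{0},\dots,\lambda a_{n};u/\lambda)$, so vanishing for all distinct integer tuples implies it for all distinct rational tuples, and real-analyticity in $(a_{1},\dots,a_{n})$ for fixed generic $u$ (off the polar locus) plus density then gives all real tuples, after which the coalescence is legitimate. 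Alternatively, keep the weights integral, say $a_{i}=T+i$, and rescale $u=w/T$ with $T\to\infty$ through integers; the contour-integral form of the divided difference that you already invoke handles this discrete confluence equally well. With that patch, together with the contour-integral justification of the limit that you indicate, your proof is complete.
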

\begin{proof}
As mentioned above, we want to use the vanishing of the elliptic
genus $\varphi_{N}(\C P^{n},t)$, for every $N\geq 2$ dividing
$(n+1)$ (for the vanishing statement see \cite{HBJ} and Proposition
\ref{toric vanishing}, since $\C P^n$ can be endowed with a toric
action), to derive the above relations. In order to compute the
coefficients $q_I(\C P^n)$ we have two possibilities: one is to use
directly the localization formula \eqref{Q integral}, and another
one is to use Proposition \ref{formula divided} (see Example
\ref{example coadjoint} (1)). In this proposition, we first used the localization
formula and a trick involving a residue calculation to guess what the coefficients are. After guessing them, we found a number theoretical proof of the above identity which is given below.
 Note that, however, the use of the rigidity theorem, and in particular of Theorem \ref{main number theory}, plays the essential role of finding what the right coefficients in the identity are.

 By definition, the power series for the elliptic genus of level $N$ is given by
            \[
            \mathcal Q_{N}(x) = x\eta^{3}\frac{\vartheta\left(\frac{x}{2\pi i}-\frac{1}{N} \right)}{\vartheta\left(\frac{x}{2\pi i} \right)\vartheta\left(-\frac{1}{N}\right)} = \sum_{j=0}^{\infty}a_{j}x^{j},
            \]
            where we omit the $\tau$-variable for brevity. Recall that $a_{j} = G_{j,N}$ is an Eisenstein series by Lemma~\ref{lemma eisenstein}. For $k\geq n$ we define the power series
            \[
            P_{n,k}(x) = \frac{1}{(k-n)!}x^{k-n+1}\frac{d^{k-n}}{dx^{k-n}}\frac{\mathcal Q_{N}(x)}{x} = (-1)^{k+n}+\sum_{j=k-n+1}^{\infty}\binom{j-1}{k-n}a_{j}x^{j}.
            \]
            Then the formula in the proposition is equivalent to showing that the coefficient at $x^{k}$ of the power series $P_{n,k}(x)\mathcal Q_{N}(x)^{n}$ vanishes. We have
            \[
            (k-n)!P_{n,k}(x)\mathcal Q_{N}(x)^{n} = x^{k+1} \left(\frac{d^{k-n}}{dx^{k-n}}\frac{\mathcal Q_{N}(x)}{x}\right)  \left(\eta^{3}\frac{\vartheta\left(\frac{x}{2\pi i}-\frac{1}{N} \right)}{\vartheta\left(\frac{x}{2\pi i} \right)\vartheta\left(-\frac{1}{N}\right)} \right)^{n}
            \]
            and thus it suffices to show that the residue at $x = 0$ of the function
            \[
            \left(\frac{d^{k-n}}{dx^{k-n}}\frac{\vartheta\left(x-\frac{1}{N} \right)}{\vartheta\left(x \right)}\right)  \left(\frac{\vartheta\left(x-\frac{1}{N} \right)}{\vartheta\left(x \right)} \right)^{n}
            \]
            vanishes. Using the elliptic transformation behavior \eqref{eq theta elliptic transformations} it is easy to check that the above function is invariant under the lattice $\Z \tau + \Z$ if $N \mid (n+1)$. Hence the sum of its residues in a fundamental domain for $\C/\left(\Z \tau +\Z\right)$ vanishes. But the function also has a unique pole at $x = 0$, hence its residue there has to vanish. This completes the proof.
\end{proof}

        The fact that the above proposition has a simple proof seems to correspond to the fact that $\C P^{n}$ is very symmetric. However, the rigidity of the elliptic genus yields many more relations of Eisenstein series which do not seem to have such simple proofs.

\end{document}